\documentclass{amsart}
\usepackage{amsmath, amssymb, amsthm, latexsym, enumerate, mathrsfs, textcomp, bbm, caption, url, overpic, appendix}

\usepackage[colorlinks=true, pdfstartview=FitV, linkcolor=blue, citecolor=blue, urlcolor=blue]{hyperref}

\usepackage{ifthen}
\usepackage[T1]{fontenc}

\newcommand{\showcomments}{yes}

\newsavebox{\commentbox}
%
{\ifthenelse{\equal{\showcomments}{yes}}%
{\footnotemark
    \begin{lrbox}{\commentbox}
    \begin{minipage}[t]{1.25in}\raggedright\sffamily\upshape\tiny
    \footnotemark[\arabic{footnote}]}
{\begin{lrbox}{\commentbox}}}%
{\ifthenelse{\equal{\showcomments}{yes}}%
{\end{minipage}\end{lrbox}\marginpar{\usebox{\commentbox}}}
{\end{lrbox}}}


\theoremstyle{plain}
\newtheorem{theorem}{Theorem}[section]
\newtheorem{corollary}[theorem]{Corollary}
\newtheorem{lemma}[theorem]{Lemma}
\newtheorem{proposition}[theorem]{Proposition}

\newtheorem{conjecture}[theorem]{Conjecture}

\theoremstyle{definition}
\newtheorem{example}{Example}[section]

\numberwithin{figure}{section}

\DeclareMathOperator{\CAT}{CAT}
\DeclareMathOperator{\vcd}{vcd}
\DeclareMathOperator{\Lk}{Lk}

\newcommand{\Swiatkowski}{{\'{S}}wi{\k{a}}tkowski}

\newcommand{\K}{K_{3,3}}
\newcommand{\G}{\Gamma}
\newcommand{\B}{\Pi}
\newcommand{\bndry}{\partial}
\newcommand{\D}{\mathcal{D}}
\newcommand{\bbh}{\mathbb{H}}

\renewcommand{\L}{\Lambda}

\title{Right-angled Coxeter groups with non-planar boundary}


\begin{document} 
\author[P. Dani]{Pallavi Dani} 

\author[M. Haulmark]{Matthew Haulmark} 

\author[G. S. Walsh]{Genevieve Walsh}

\thanks{This work of the first author was supported
by a grant from the Simons Foundation (\#426932, Pallavi Dani) and by NSF Grant No.~DMS-1812061. The third author was supported by NSF Grant No. ~DMS 1709964. Some of this work was conducted at the Centre International de Rencontres Math\'ematiques in Luminy and we thank them for their hospitality. We would like to thank Kevin Schreve for his comments on a draft of this paper.   } 

\begin{abstract} We investigate the planarity of the boundaries of right-angled Coxeter groups. 
We show that non-planarity of the defining graph
 does not necessarily imply non-planarity of every boundary of the associated right-angled Coxeter group, although it does in many cases. 
Our techniques yield a characterization of the triangle-free defining graphs such that the associated right-angled Coxeter group has boundary a Menger curve. \end{abstract}

\maketitle

\section{Introduction}

Here we investigate boundaries of certain $\CAT(0)$ groups.   A group is $\CAT(0)$ if it acts geometrically (properly discontinuously, co-compactly and by isometries) on a $\CAT(0)$ space.  Every $\CAT(0)$ metric space $X$ has a well-defined visual boundary~$\partial X$.   We will denote a proper $\CAT(0)$ space on which $G$ acts geometrically by~$X_G$, or by $X_\G$, when $G$ is the right-angled Coxeter group defined by a graph $\G$. See Section~\ref{sec:prelim} for more detailed  definitions. 

When a group $G$ acts geometrically on $X_G$, the topology of $\bndry X_G$ can provide information about the algebra of $G$, even though the boundary of $G$ may not be well-defined.  The dimension of $\bndry X_G$ is closely related to the cohomological dimension of 
$G$~\cite{GO07, BM}, and in the case that $G$ is torsion-free, $G$ is a $\text{PD}(3)$ group exactly when~$\partial X_G \cong S^2$~\cite{BM}.  Also, splittings of $G$ are expressed as topological features in~$\bndry X_G$ (see~\cite{Bow98, PS09, Hau18b} among others). 

An important question about the topology of boundaries is their planarity. We say that a topological space is {\it planar} if it can be embedded in $S^2$. When $G$ can be virtually realized as a geometrically finite Kleinian group, every $\CAT(0)$ boundary~$\partial X_G$ is planar.  This can be seen as follows.  The limit set of the Kleinian group is a subset of $S^2$.  Moreover,  the $\CAT(0)$ boundary is well-defined, since $G$ is either hyperbolic or $\CAT(0)$ with isolated flats \cite{HK1}. This boundary is either the limit set itself, or the limit set with parabolic fixed points replaced by circles.  In either case, it is planar.  By a special case of a theorem of 
Bestvina--Kapovich--Kleiner~\cite{BKK02}, if $G$ is the fundamental group of a 3-manifold, then 
no boundary of $G$ contains a $K_5$ or a $K_{3,3}$ (though this doesn't immediately imply that every boundary is planar, as we discuss below).

Conjecture~\ref{conj: the conjecture} below, which was asked as questions in~\cite[Questions 1.3 and 1.4]{SchSta18}, 
presents a sort of converse to the above statements.  This paper contains some evidence for 
Conjecture~\ref{conj: the conjecture} in the setting of right-angled Coxeter groups.   See Corollary~\ref{cor:planar}.

\begin{conjecture}\label{conj: the conjecture} Let $G$ be a $\CAT(0)$ group with a planar visual boundary.  Then every visual boundary of $G$ is planar, and furthermore,  $G$ is virtually the fundamental group of a compact 3-manifold. \end{conjecture}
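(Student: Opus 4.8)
The plan is to invert the argument sketched in the introduction: rather than starting from a Kleinian structure and reading off planar boundaries, I would begin with a single planar boundary $\bndry X_G \hookrightarrow S^2$ and attempt to recover a geometrically finite Kleinian structure on a finite-index subgroup of $G$. Once such a structure is in hand, the introduction's reasoning delivers both conclusions at once: the limit-set description shows every visual boundary is planar, and geometric finiteness together with tameness and geometrization makes $G$ virtually the fundamental group of a compact $3$-manifold.

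First I would try to constrain the coarse geometry of $G$. Planarity of $\bndry X_G$ should forbid large or badly-arranged flats: a $k$-flat contributes an $(k-1)$-sphere to the boundary, so a flat of dimension at least four already obstructs planarity, while an intertwined family of lower-dimensional flats produces joins and suspensions that likewise cannot embed in $S^2$. I would use this to argue that $G$ is either word-hyperbolic or $\CAT(0)$ with isolated flats, placing us in exactly the dichotomy of \cite{HK1} invoked in the introduction and furnishing a relatively hyperbolic structure whose peripheral subgroups are virtually abelian of rank at most two.

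Next I would promote the topological embedding to a dynamical one. In the hyperbolic case $G$ acts on $\bndry X_G$ as a uniform convergence group, and in the isolated-flats case the analogous statement holds for the Bowditch boundary as a geometrically finite convergence action. The goal is to realize this planar convergence action by M\"obius transformations of $S^2$, that is, to produce a discrete faithful representation $G \to \mathrm{Isom}(\bbh^3)$ whose limit set realizes the boundary (the visual boundary then being the limit set with parabolic fixed points replaced by circles, exactly as in the introduction). Taking the convex hull of the limit set in $\bbh^3$ and checking that $G$ acts cocompactly on it modulo cusps would exhibit $G$ as geometrically finite, and Agol--Calegari--Gabai tameness plus geometrization would then complete the manifold conclusion, with planarity of every boundary following as above.

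The main obstacle is the uniformization step, namely realizing the planar convergence action by conformal maps of $S^2$. Already in the case $\bndry X_G \cong S^2$ this is precisely the Cannon Conjecture, which remains open, so any complete proof of Conjecture~\ref{conj: the conjecture} necessarily subsumes it; for proper planar boundaries one must in addition control the complementary domains of the embedding and show that they assemble into the cusps and boundary components of a manifold. I therefore expect that a full proof would require a genuinely new idea at the uniformization stage, which is why the present paper instead establishes Conjecture~\ref{conj: the conjecture} in restricted right-angled Coxeter settings (Corollary~\ref{cor:planar}) rather than in general.
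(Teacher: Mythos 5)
The statement you were handed is Conjecture~\ref{conj: the conjecture}: it is an open conjecture, not a theorem, and the paper contains no proof of it. The paper offers only partial evidence, namely Corollary~\ref{cor:planar} (via Theorem~\ref{thm:main2}) in the restricted setting of triangle-free right-angled Coxeter groups with Sierpinski carpet boundary. Your proposal correctly recognizes this: you outline a research program rather than a proof, and you honestly flag that the uniformization step subsumes the Cannon Conjecture, which matches the paper's own framing (the introduction notes that Conjecture~\ref{conj: the conjecture} implies the Cannon Conjecture for hyperbolic $\CAT(0)$ groups). So there is no ``paper's proof'' to compare against, and your assessment of where the difficulty lies is accurate.

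That said, even as a program, your first step contains a genuine error. You claim that planarity of one visual boundary forces $G$ to be hyperbolic or $\CAT(0)$ with isolated flats, on the grounds that ``an intertwined family of lower-dimensional flats produces joins and suspensions that likewise cannot embed in $S^2$.'' This is false: the group $F_2 \times \mathbb{Z}$ acts geometrically on $(\text{tree}) \times \mathbb{R}$, its $2$-flats are as far from isolated as possible, and its visual boundary is the suspension of a Cantor set --- the join of a Cantor set with two points --- which embeds in $S^2$ (put the Cantor set on the equator and cone to the two poles). Only joins with larger sets (e.g.\ a Cantor set joined with a Cantor set, which contains a $K_{3,3}$) obstruct planarity; suspensions do not. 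Consequently the dichotomy you want to feed into \cite{HK1} does not follow from planarity, and any attack on the conjecture must handle groups outside the hyperbolic/isolated-flats world. (Note $F_2 \times \mathbb{Z}$ is the fundamental group of a compact $3$-manifold, so it is consistent with the conjecture --- it simply escapes your intermediate claim, and with it your appeal to a well-defined boundary and to relatively hyperbolic convergence dynamics.) The later steps --- realizing a planar convergence action by M\"obius transformations, geometric finiteness of the convex hull action, tameness plus geometrization --- are, as you say, open, and the first of them is exactly where the Cannon Conjecture lives; your proposal is best read as a correct diagnosis of why the general statement remains a conjecture rather than as a proof of it.
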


By work of Ha\"{i}ssinsky~\cite[Theorem 1.10]{Hai15}, this conjecture is known to hold for hyperbolic groups which are $\CAT(0)$ cubed, and hence for hyperbolic right-angled Coxeter groups.   We note that Conjecture~\ref{conj: the conjecture} implies the Cannon Conjecture~\cite{Can89} for hyperbolic groups which are $\CAT(0)$.  As we write there are no known examples of hyperbolic groups which are not $\CAT(0)$. 

 Conjecture~\ref{conj: the conjecture} is more speculative than the analogous conjecture for hyperbolic groups in ~\cite[Conjecture 1.6]{Hai15}. This is because $\CAT(0)$ boundaries, unlike the boundaries of hyperbolic groups, are not always well-defined \cite{CK00} and not always locally connected~\cite{MR99}. This last property means that a result of 
 Claytor~\cite[Theorem~C]{Claytor} does not necessarily apply. In particular, when $\bndry X$ is not locally connected non-planarity of the boundary does not imply that there is a $K_{3,3}$ or $K_5$ in the boundary.   
 Schreve and Stark~\cite{SchSta18} have an example of two homeomorphic $\CAT(0)$ complexes with two different boundaries, one of which contains an embedded $K_{3,3}$ and one of which does not.  Both boundaries are non-planar.

In this paper we study the planarity of boundaries of right-angled Coxeter groups. 
Given a finite simplicial graph $\Gamma$ we denote the 
associated right-angled Coxeter group by $W_{\Gamma}$.  Every $W_\Gamma$ is $\CAT(0)$, and in particular, $W_\Gamma$ acts geometrically on a $\CAT(0)$ cube complex $\Sigma_{\Gamma}$ called its {\it Davis--Moussong complex}.

It is tempting to conjecture that if $\Gamma$ is non-planar then every $\CAT(0)$ boundary of $W_\G$ is non-planar, up to a finite subgroup.  Indeed, {\Swiatkowski } speculates that planarity of the defining graph may be a necessary condition for a planar boundary.  (See~\cite[Remark 3]{Swi16}.) 
However, we prove that non-planarity of the defining graph of a right-angled Coxeter group does not guarantee non-planarity of the boundary; there is a $W_\Gamma$ with a non-planar defining graph $\G$, which has a planar $\CAT(0)$ boundary, as we explain in Example~\ref{ex:bad-graph} below. (Indeed there are many.)  
\begin{figure}[h!] 
\begin{center}
\begin{overpic}[scale=0.8]{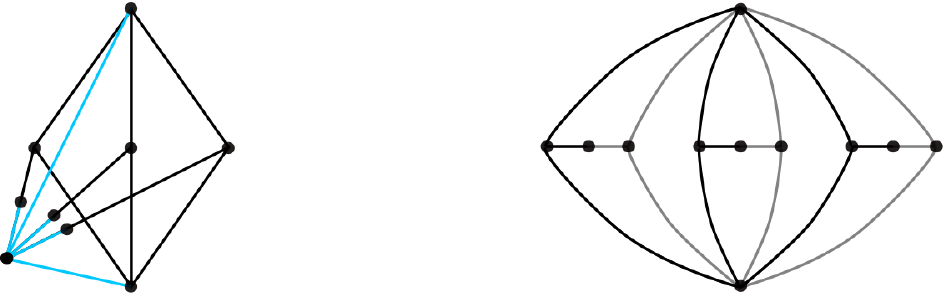}
\put(13,32){\scriptsize $x$}
\put(0,15){\scriptsize $a$}
\put(-3, 2){\scriptsize $y$}
\put(15,15){\scriptsize $b$}
\put(13,-2){\scriptsize $z$}
\put(25.5,15){\scriptsize $c$}
\put(78,32){\scriptsize $x$}
\put(55,15){\scriptsize $a$}
\put(67.5,15){\scriptsize $a'$}
\put(71.5,15){\scriptsize $b$}
\put(84,15){\scriptsize $b'$}
\put(87,15){\scriptsize $c$}
\put(100,15){\scriptsize $c'$}
\put(78,-2){\scriptsize $z$}
\put(-2,28){\scriptsize $\L$}
\put(96,28){\scriptsize $\L'$}
\end{overpic}
\end{center}
\caption {In the graph $\L$ on the left, each blue segment is an edge, while each black segment is a path which may or may not be subdivided.  The graph $\L'$ on the right is the double of $\L$ over the vertex $y$, as defined in Section~\ref{sec:RACG}.  The two copies  of $\L$ minus the open star of $y$ are shown in $\L'$ in black and grey respectively.
}
\label{fig-badgraph-double}
\end{figure}

\begin{example}\label{ex:bad-graph}
Let $\L$ denote the graph on the left in Figure~\ref{fig-badgraph-double}, which is non-planar.  
As we observe in Lemma~\ref{lem:doubling}, the group $W_\L$ contains an index two subgroup isomorphic to $W_{\L'}$, 
where $\L'$ is the planar graph on the right of 
Figure~\ref{fig-badgraph-double}.  Any right-angled Coxeter group with planar defining graph is virtually the fundamental group of a 3-manifold.  Indeed, the defining graph can be embedded as an induced subgraph in a triangulation $T$ of a 2-sphere.  
Now $W_T$, 
the right-angled Coxeter group defined by the one-skeleton of $T$, is virtually a closed 3-manifold group, since the Davis--Moussong complex $\Sigma_T$  of $W_T$  is a manifold.  The 
Davis--Moussong complex $\Sigma_{\L'}$ of~$W_{\L'}$ is a convex subcomplex of $\Sigma_T$.  Consequently $\partial \Sigma_{\L'}$ embeds in $\partial \Sigma_T \cong  S^2$, 
so $\partial \Sigma_{\L'}$ is planar.  (See also~\cite{DO01}.)   
Now $W_\L$ acts on $W_{\L'}$ by  conjugation, and this induces a geometric action of $W_\L$ on 
on 
$\Sigma_{\L'}$.
It follows that $W_\L$ has a planar boundary as well.   
\end{example}

The graph $\L$ in Figure~\ref{fig-badgraph-double} is a subdivision of the graph in Figure~\ref{fig-badgraph}.  More generally, 
let $\B$ denote 
any  
graph obtained by subdividing the black segments 
of the graph in Figure~\ref{fig-badgraph}
enough to get a triangle-free graph.  Then $W_\B$ has a planar boundary by a similar argument.  
However, we show that, in a certain sense, the graph $\B$ is the only obstruction:

\begin{theorem}\label{thm:main1}
Let $\G$ be a triangle-free non-planar graph, and let $X_{\G}$ be a proper $\CAT(0)$ space on which $W_\G$ acts geometrically.   Then either $\partial X_{\G}$ is non-planar or $W_\G$ contains a finite-index special subgroup 
whose defining graph contains an induced copy of the graph $\B$ in Figure~\ref{fig-badgraph}.  
\end{theorem}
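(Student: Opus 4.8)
The plan is to prove the contrapositive of the essential direction: I will assume that \emph{no} finite-index special subgroup of $W_\G$ has a defining graph containing an induced copy of $\B$, and then produce an explicitly non-planar compactum inside $\partial X_\G$. The starting point is purely graph-theoretic. Since $\G$ is non-planar, Kuratowski's theorem supplies a subgraph that is a subdivision of $\K$ or of $K_5$. Because $\G$ is triangle-free, I would next argue that such a subdivision may be taken to be \emph{induced}: any chord of a minimal Kuratowski subdivision would either shortcut a branch path, contradicting minimality, or create a short cycle, which in the triangle-free setting can be absorbed into a smaller forbidden configuration. This reduces matters to the geometry of the special subgroup $W_\L$ carried by a fixed induced subdivision $\L$ of $\K$ or $K_5$.

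The geometric engine is that induced cycles produce circles in \emph{every} boundary. An induced $4$-cycle spans a subgroup $D_\infty \times D_\infty$, whose minset in $X_\G$ is an isometrically embedded flat by the product/flat-torus decomposition, so its limit set is an embedded circle; an induced cycle of length at least five spans a hyperbolic reflection group, a quasiconvex subgroup whose limit set is again a circle. The crucial robustness point, and the reason the argument applies to an arbitrary $X_\G$ rather than only to $\Sigma_\G$, is that these limit sets -- together with the arcs along which two of them meet, which are governed by the subgroup carried by the sub-path the two cycles share -- are topologically the same for every proper $\CAT(0)$ space on which $W_\G$ acts geometrically. Running over the cycles carried by $\L$, these circles and their arcs of intersection should assemble into a finite graph $\mathcal{G}$ embedded in $\partial X_\G$ whose homeomorphism type is dictated by the combinatorics of $\L$; checking that the intersection arcs glue into a genuine embedded $1$-complex is itself part of the work. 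If $\mathcal{G}$ is non-planar, then $\partial X_\G$ is non-planar and we are done.

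It remains to decide when $\mathcal{G}$ is planar, and this is where $\B$ enters and where the main difficulty lies. I expect $\mathcal{G}$ to be non-planar \emph{unless} the constituent circles can be simultaneously nested in $S^2$, and I would aim to show that such a nesting forces the underlying data to be exactly the pattern $\B$ of Example~\ref{ex:bad-graph}, in which the doubling of Lemma~\ref{lem:doubling} planarizes the group. Thus, in the planarizable case I would trace the nesting back through the doubling correspondence to locate an induced $\B$ in the defining graph of $\G$ itself or of a finite-index special subgroup obtained by doubling, while in all remaining cases $\mathcal{G}$ is non-planar and the contrapositive closes. The hard part is precisely this classification: one must rule out every alternative planar embedding of the arrangement of circles and match the unique surviving possibility with $\B$, all while tracking how passage to a finite-index special subgroup (equivalently, a doubling) alters the induced cycles and hence $\mathcal{G}$. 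Separating the $K_5$-subdivision case from the $\K$-subdivision case, and verifying that the triangle-free hypothesis genuinely forbids any further planarizations, are where I anticipate the most delicate case analysis.
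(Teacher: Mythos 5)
Your geometric engine---induced cycles giving circles in every $\CAT(0)$ boundary via quasiconvexity of special subgroups, and assembling these circles and connecting arcs into an embedded graph---is sound, and is essentially what the paper does in Section~\ref{sec:boundariesnonplanar}. But the graph-theoretic step your whole plan rests on is false: a minimal Kuratowski subdivision in a triangle-free graph cannot, in general, be taken to be induced, and chords cannot be ``absorbed into a smaller forbidden configuration.'' The graph $\B$ itself (with black segments subdivided to kill triangles) is a counterexample: it is triangle-free and non-planar, yet $W_\B$ has planar boundary (Example~\ref{ex:bad-graph}), so by your own circle machinery (equivalently Proposition~\ref{prop: non-planar 1}) it cannot contain any induced $\K$ subdivision. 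Worse, your proposal is internally inconsistent: if your induced-subdivision claim were true, your argument would show that \emph{every} triangle-free non-planar $\G$ has non-planar boundary, making the exceptional clause of the theorem vacuous and again contradicting Example~\ref{ex:bad-graph}.

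What is missing is precisely the content of Section~\ref{sec:graph} of the paper. Since induced Kuratowski subdivisions need not exist in $\G$ itself, one passes to iterated doubles $D_v\G$ (index-two subgroups, by Lemma~\ref{lem:doubling}), doubling over endpoints of ``bad edges'' of a shortest $\K$ subdivision and showing (Lemmas~\ref{lem:adjacent}--\ref{lem:non-ess}) that each such double carries a $\K$ subdivision with strictly fewer bad edges, except in certain rigid configurations. The induction terminates (Proposition~\ref{prop:lambdas}) either with an induced $\K$ subdivision, or with one of exactly two exceptional induced configurations (Figure~\ref{fig:lambdas}): one is $\B$, which is the exceptional output of the theorem, and the other contains no induced $\K$ subdivision either, yet has non-planar boundary by a bespoke arrangement of four boundary circles (Proposition~\ref{prop: special graph})---not by the nesting classification you envisage, which would in effect have to rediscover both boundary propositions and the graph theory at once. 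The $K_5$ case is likewise dispatched by a doubling trick (Lemma~\ref{lem:K5}) rather than a parallel case analysis. Without this doubling machinery, and without a mechanism that produces the two exceptional graphs, your argument cannot get past its first step.
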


\begin{figure}[h!]
\begin{center}
\begin{overpic}[scale=0.7]{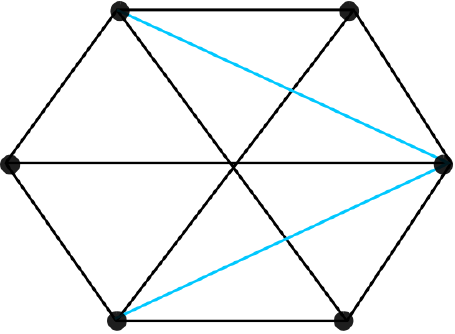}
\end{overpic}
\end{center}
\caption {The graph $\B$ is defined to be any graph as shown in this figure, such that the 
blue segments are edges, the black segments may or may not be edges, and enough 
 of the black segments are subdivided to ensure that the result is triangle-free. A specific instance of such a subdivision appears on the left in Figure~\ref{fig-badgraph-double}.
}
\label{fig-badgraph}

\end{figure}

A graph is \emph{inseparable} if it is connected, has no separating complete subgraph, no cut pair, and no separating complete subgraph suspension. When $\G$ is inseparable and $\partial X_\G$ is locally connected and planar,  it follows from our Corollary~\ref{cor:badnoplanar}, that $\partial X_\G$  cannot contain an induced copy of the graph in Figure~\ref{fig-badgraph}.  Thus we have the following:

\begin{theorem}\label{thm:main2}
Let $\G$ be a triangle-free inseparable graph and let $X_\Gamma$ be a $\CAT(0)$ space on which $W_{\G}$ acts geometrically.  If $\G$ is non-planar and $\bndry X_\G$ is locally connected and contains no local cut points, then $\bndry X_\G$ is non-planar.  
\end{theorem}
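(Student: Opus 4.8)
The plan is to argue by contradiction, using Theorem~\ref{thm:main1} to locate the forbidden subgraph $\B$ and then using Corollary~\ref{cor:badnoplanar} to show that its presence is incompatible with the assumed topology of the boundary. Suppose, for contradiction, that $\partial X_\G$ is planar. Since $\G$ is triangle-free and non-planar, Theorem~\ref{thm:main1} applies, and because $\partial X_\G$ is planar its first alternative cannot hold; hence we are in the second alternative, so $W_\G$ contains a finite-index special subgroup $W_{\G'}$ whose defining graph $\G'$ contains an induced copy of $\B$.

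The next step is to push the topological hypotheses on $\partial X_\G$ down to this subgroup. As $W_{\G'}$ has finite index in $W_\G$, it acts geometrically on the same space $X_\G$, so its visual boundary is literally $\partial X_\G$; in particular $\partial X_{\G'} = \partial X_\G$ is locally connected, planar, and free of local cut points. I would then invoke Corollary~\ref{cor:badnoplanar} for $W_{\G'}$: the induced copy of $\B$ in $\G'$, together with local connectedness and planarity of the boundary, forces a local cut point in $\partial X_{\G'} = \partial X_\G$. This contradicts the hypothesis that $\partial X_\G$ has no local cut points, and so $\partial X_\G$ must be non-planar. The inseparability of $\G$ enters at the level of setting the stage, guaranteeing that $W_\G$ is one-ended and $\partial X_\G$ is connected, so that the notions of (local) cut point behave as expected and the cited results are in force.

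The step I expect to be the main obstacle is the passage to the finite-index subgroup and the verification that Corollary~\ref{cor:badnoplanar} really does apply to $\G'$. One must check that the hypotheses under which that corollary is proved survive this passage: in particular that $\G'$ still lies in the triangle-free regime in which $\B$ and the local-cut-point analysis were developed, and that nothing about replacing $\G$ by the defining graph of a finite-index special subgroup disturbs the identification $\partial X_{\G'} = \partial X_\G$ or the meaning of a local cut point there. The logical skeleton is a short contradiction, but the care lies in this bookkeeping, ensuring the structural conclusion of Theorem~\ref{thm:main1} can be fed cleanly into Corollary~\ref{cor:badnoplanar}.
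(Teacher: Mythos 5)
Your proposal is correct and follows essentially the same route as the paper: apply Theorem~\ref{thm:main1}, use inseparability of $\G$ to get connectedness of the boundary, and feed the finite-index special subgroup whose graph contains an induced $\B$ into Corollary~\ref{cor:badnoplanar} (which you invoke in contrapositive form, the paper invoking it directly). The bookkeeping points you flag resolve exactly as you anticipate: doubling preserves triangle-freeness, and since the finite-index subgroup acts geometrically on the same space $X_\G$, the identification $\partial X_{\G'}=\partial X_\G$ is immediate.
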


 Using Theorem~\ref{thm:main2} we can now conclude that Conjecture~\ref{conj: the conjecture} holds for a class of right-angled Coxeter groups:

\begin{corollary} (to Theorem~\ref{thm:main2})
\label{cor:planar} Let $\G$ be a graph with no triangles, and $X_\G$ a proper $\CAT(0)$ space on which $W_\G$ acts geometrically.   Then if $\partial X_G$ is a Sierpinski carpet, $W_{\G}$ is virtually a 3-manifold group..  
\end{corollary}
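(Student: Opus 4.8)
The plan is to leverage the topological rigidity of the Sierpinski carpet to first force $\G$ to be inseparable, then to apply Theorem~\ref{thm:main2}, and finally to convert planarity of $\G$ into the $3$-manifold conclusion. Recall that a Sierpinski carpet is a Peano continuum, hence connected and locally connected, that it is planar, and that by the standard topological characterization of the carpet it has no local cut points---and in particular no global cut points. These match the hypotheses on $\partial X_\G$ in Theorem~\ref{thm:main2} except for the requirement that $\G$ be inseparable, so the first and main task is to deduce inseparability of $\G$ from the structure of the boundary.

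For this I would rule out, one at a time, the graph features whose absence defines inseparability. Since $\partial X_\G$ is connected (and not a pair of points), $W_\G$ is one-ended; for a right-angled Coxeter group this is equivalent to $\G$ being connected with no separating complete subgraph, which handles the first two conditions. For the remaining two, observe that a cut pair of non-adjacent vertices $\{a,b\}$ produces a splitting of $W_\G$ over the two-ended special subgroup $W_{\{a,b\}}\cong D_\infty$, while a separating complete-subgraph suspension $\{s,t\}*K$ produces a splitting over the two-ended subgroup $W_{\{s,t\}*K}\cong D_\infty\times W_K$, the factor $W_K$ being finite because $K$ is complete. By the dictionary between splittings and boundary structure (\cite{PS09,Hau18b}), an essential splitting over a two-ended subgroup forces local cut points into $\partial X_\G$. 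As the carpet has none, $\G$ admits neither a cut pair nor a separating complete-subgraph suspension, and is therefore inseparable.

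Once $\G$ is known to be triangle-free and inseparable with $\partial X_\G$ locally connected and free of local cut points, Theorem~\ref{thm:main2} finishes the structural step: if $\G$ were non-planar the boundary would be non-planar, contradicting the fact that a Sierpinski carpet embeds in $S^2$. Hence $\G$ is planar, and the corollary follows from the statement recorded in Example~\ref{ex:bad-graph} (cf.\ \cite{DO01}) that a right-angled Coxeter group with planar defining graph is virtually the fundamental group of a compact $3$-manifold. I expect the inseparability reduction to be the crux, and within it the elimination of cut pairs and complete-subgraph suspensions: this requires the splitting-to-local-cut-point dictionary to be available for the given, possibly non-canonical, $\CAT(0)$ space $X_\G$ rather than for a canonically defined boundary, and it requires checking that each such graph feature yields a genuinely essential splitting. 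Local connectedness, which the carpet supplies automatically, is what makes the relevant Papasoglu--Swenson and Haulmark-type arguments applicable.
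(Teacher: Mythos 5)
Your reduction hinges on first proving that $\G$ is inseparable, and the step you use for that---``an essential splitting of $W_\G$ over a two-ended subgroup forces local cut points into $\partial X_\G$''---is a genuine gap. The references you cite go in the opposite direction: Papasoglu--Swenson \cite{PS09} show that the boundary of a one-ended $\CAT(0)$ group has no cut points and extract splittings \emph{from} cut pairs in the boundary, and Haulmark \cite{Hau18b} likewise deduces splittings \emph{from} local cut points, and only under the isolated-flats hypothesis. Corollary~\ref{cor:planar} carries no hyperbolicity or isolated-flats assumption, so what you need is the implication ``splitting over a two-ended subgroup $\Rightarrow$ local cut point'' for an \emph{arbitrary} proper $\CAT(0)$ space on which $W_\G$ acts geometrically, and this is not available for general $\CAT(0)$ groups: their boundary actions are not convergence actions, which is precisely why Theorem~\ref{thm:main2} takes ``no local cut points'' as a \emph{hypothesis} on the boundary instead of deriving it from graph-theoretic conditions, and why Corollary~\ref{cor:Menger} adds the hyperbolic/isolated-flats assumption before invoking \cite{Bow98} and \cite{Hau18b}. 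You flag this point as the crux yourself, but you do not resolve it, so the inseparability step fails as written.

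The gap is avoidable because inseparability is not actually needed. The paper's own argument observes that, in the proof of Theorem~\ref{thm:main2}, inseparability of $\G$ is used only to conclude that $\partial X_\G$ is connected; the remaining inputs (local connectedness, absence of local cut points, planarity) are hypotheses on the boundary. A Sierpinski carpet is connected, locally connected, planar, and has no local cut points, so one runs the \emph{proof} of Theorem~\ref{thm:main2} rather than quoting its statement: if $\G$ were non-planar, Theorem~\ref{thm:main1} would give either a non-planar boundary (contradicting planarity of the carpet) or a finite-index special subgroup whose defining graph contains an induced copy of $\B$, in which case Corollary~\ref{cor:badnoplanar}---whose hypotheses are exactly the carpet properties---again forces $\partial X_\G$ to be non-planar, a contradiction. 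Hence $\G$ is planar, and your final step (planar defining graph implies $W_\G$ is virtually a $3$-manifold group, via Example~\ref{ex:bad-graph}) is correct and coincides with the paper's.
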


The corollary holds as follows.  Since $\partial X$ is a Sierpinski carpet, it is planar, locally connected, and has no local cut points or cut pairs.  In the proof of Theorem~\ref{thm:main2}, 
the hypotheses are used to conclude that $\partial X$, if planar, it is a Sierpinski carpet.  Therefore in 
this setting, we can conclude from Theorem~\ref{thm:main2},  that the defining graph  $\G$ is planar as well. 
 This implies that $W_\G$ is virtually a 3-manifold group by the argument in Example~\ref{ex:bad-graph}.

Theorem~\ref{thm:main2} also allows us to characterize the right-angled Coxeter groups with 
triangle-free defining graphs which have Menger curve boundaries.  

\begin{corollary} (to Theorem~\ref{thm:main2}) 
\label{cor:Menger}
Let $\G$ be a triangle-free inseparable graph, such that $W_{\G}$ is either hyperbolic or $\CAT(0)$ with isolated flats.  Then the following are equivalent:
\begin{enumerate} 
\item $\G$ is non-planar. 
\item Every $\CAT(0)$ boundary $\partial X_\G$ is Menger curve. 
\item Some $\CAT(0)$ boundary $\partial X_\G$ is a Menger curve. 
\end{enumerate} 
\end{corollary}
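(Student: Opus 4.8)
The plan is to establish the three equivalences by using Theorem~\ref{thm:main2} to close the loop, combined with known characterizations of the Menger curve among boundaries of groups that are hyperbolic or $\CAT(0)$ with isolated flats. First I would recall the relevant recognition theorem: a compact, connected, locally connected, one-dimensional metric space with no local cut points and no nonempty open planar subset is a Menger curve (by the Anderson--Kline characterization). The strategy is to show that under the inseparability and geometric hypotheses, the boundary $\partial X_\G$ is automatically one-dimensional, connected, locally connected, and free of local cut points; planarity then becomes the single distinguishing feature separating the Menger curve from the Sierpinski carpet.

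The main work is the implication $(1)\Rightarrow(2)$. Assuming $\G$ is non-planar, I would argue that \emph{every} $\CAT(0)$ boundary $\partial X_\G$ is a Menger curve. When $W_\G$ is hyperbolic or $\CAT(0)$ with isolated flats, the boundary is well-defined up to homeomorphism (by \cite{HK1}), and inseparability of $\G$ should force $\partial X_\G$ to be connected, locally connected, and without local cut points or cut pairs; these are precisely the hypotheses on the boundary that Theorem~\ref{thm:main2} requires. Theorem~\ref{thm:main2} then applies directly to conclude $\partial X_\G$ is non-planar. Combined with one-dimensionality (which follows since a triangle-free graph yields a group of virtual cohomological dimension at most two, forcing a one-dimensional boundary) and the absence of local cut points, the recognition theorem identifies $\partial X_\G$ as the Menger curve. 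The implication $(2)\Rightarrow(3)$ is immediate. For $(3)\Rightarrow(1)$, I would argue the contrapositive: if $\G$ is planar, then by the argument in Example~\ref{ex:bad-graph}, $\partial X_\G$ embeds in $S^2$ and is planar, so it cannot be a Menger curve, which is intrinsically non-planar; hence no $\CAT(0)$ boundary is a Menger curve.

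The hardest step is verifying that inseparability of $\G$ translates into the topological hypotheses on $\partial X_\G$ (connectedness, local connectedness, no local cut points, no cut pairs) needed to invoke Theorem~\ref{thm:main2} and the Menger recognition theorem. The dictionary between separating structures in a Coxeter defining graph and cut-point phenomena in the boundary is exactly the kind of translation alluded to in the introduction's discussion of splittings \cite{Bow98, PS09, Hau18b}, so I would lean on that correspondence: a cut pair or separating complete subgraph suspension in $\G$ corresponds to a splitting visible as a local cut point or cut pair in $\partial X_\G$, and inseparability rules these out. I would also need to confirm that the isolated-flats hypothesis guarantees local connectedness of the boundary, which is where the subtlety flagged in the introduction---that $\CAT(0)$ boundaries need not be locally connected in general---is circumvented by restricting to the hyperbolic or isolated-flats setting. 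Once these topological properties are in hand, the remaining steps are formal applications of Theorem~\ref{thm:main2} and the standard recognition criteria for the Menger curve and the Sierpinski carpet.
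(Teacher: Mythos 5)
Your proof follows the same skeleton as the paper's up to the last step: establish that $\partial X_\G$ is one-dimensional, connected, locally connected, and has no local cut points (via inseparability, Stallings, Bestvina--Mess, Bowditch/Haulmark, and Hruska--Ruane in the isolated flats case), apply Theorem~\ref{thm:main2} to get non-planarity, handle the planar direction through Example~\ref{ex:bad-graph}, and use well-definedness of the boundary \cite{HK1} to relate (2) and (3). But there is a genuine gap at the final recognition step. Anderson's characterization of the Menger curve requires that \emph{no nonempty open subset} of the space is planar, which is strictly stronger than non-planarity of the whole space --- and global non-planarity is all that Theorem~\ref{thm:main2} delivers. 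The purely topological implication you invoke (compact, connected, locally connected, one-dimensional, no local cut points, non-planar $\Rightarrow$ Menger curve) is false: for instance, gluing a Menger curve to a Sierpinski carpet along a peripheral circle produces a space with all of these properties that is not a Menger curve, since it has nonempty planar open subsets. So your argument stalls exactly where you claim ``the recognition theorem identifies $\partial X_\G$ as the Menger curve.''

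The paper closes this gap with group theory rather than point-set topology: it cites \cite[Theorem~4]{KK00} in the hyperbolic case and \cite[Theorem~1.2]{Hau18b} in the isolated flats case, which say that a boundary with the stated topological properties must be a circle, a Sierpinski carpet, or a Menger curve; global non-planarity then eliminates the first two options, which are planar. If you wanted to salvage your route through Anderson's criterion, you would need to upgrade ``non-planar'' to ``nowhere planar,'' and that requires the dynamics of the group action on the boundary (e.g., by Claytor's theorem \cite{Claytor} a non-planar Peano continuum without cut points contains a $K_5$ or $K_{3,3}$, and north--south dynamics of loxodromic elements pushes this non-planar compactum into every nonempty open set); this is essentially the content of the cited theorems, not a formality, and it is precisely what is delicate in the isolated flats setting. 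A minor further point: you assert that $\vcd(W_\G)\le 2$ ``forces'' a one-dimensional boundary, but $\vcd$ could equal $1$, giving a Cantor set boundary; the paper rules this out using inseparability together with Stallings' theorem \cite{Sta68} before applying Bestvina--Mess \cite{BM}.
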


\begin{proof}
We claim that under the hypotheses of the corollary, $\partial X_\G$ is 1-dimensional for every 
CAT(0) space $X_\G$ on which $W_\G$ acts geometrically.  
Since $\G$ is triangle-free, the Davis--Moussong complex $\Sigma_\G$ is $2$-dimensional, and the  virtual cohomological  dimension ($\vcd$) of $W_\G$ is equal to $1$ or $2$. By Stallings~\cite{Sta68}  a group $G$ with $\vcd(G)=1$ is virtually free.  Since $\G$ is inseparable, the group $W_\G$ is not virtually free, so $\vcd(W_\G)=2$.  
Since Coxeter groups are virtually torsion-free, a theorem of Bestvina-Mess~\cite[Corollary 1.4]{BM}, see also~\cite[Theorem~1.7]{Bes96},
 implies that the covering dimension of $\bndry X_\G$ is equal to $\vcd(W_\G)-1$. 
 This proves the claim. 
 
 Now suppose $\G$ is non-planar. 
 When $W_\G$ is hyperbolic, $\partial X_\G$ is locally connected (see~\cite{BM, Swa, Bow99a}). 
 Since 
$\vcd(W_\G)=2$, the highest rank of a virtually abelian subgroup is 2, and we apply \cite{HR1} to 
 conclude that $\partial X_\G$ is locally connected in the $\CAT(0)$ with isolated flats case.  
 Since $\G$ is inseparable, $W_\G$ does not split over a 2-ended subgroup.  Thus 
 by~\cite[Theorem~6.2]{Bow98} (in the hyperbolic case) or by~\cite[Theorem~1.3]{Hau18b} (in the isolated flats case) we conclude that $\partial X_\G$ has no local cut points.  
 Now we may apply  Theorem~\ref{thm:main2} to conclude 
 that $\partial X_\G$ is non-planar.  Then by~\cite[Theorem~4]{KK00} (in the hyperbolic case)
or by~\cite[Theorem 1.2]{Hau18b} (in the isolated flats case), we conclude that $\partial X_\G$ is a Menger curve.  
 
On the other hand if $\G$ is planar, then $\partial \Sigma_\G$ is planar (as we showed in 
Example~\ref{ex:bad-graph}), and cannot be a Menger curve.  Conditions (2) and  (3) are equivalent because in this situation, the $\CAT(0)$ boundary is well defined \cite{HK1}. 
\end{proof}

We remark that~\cite[Theorem 1]{Swi16} implies (in the setting of triangle-free graphs) that if $\G$ is inseparable and $W_\G$ is hyperbolic, then the Gromov boundary $\partial W_\G$ is a Sierpinski carpet.   {\Swiatkowski } suggests in Remark 3 of~\cite{Swi16} that planarity of the nerve may be a necessary condition for a Coxeter group to have Sierpinski carpet boundary (up to a product with a finite Coxeter group).  Corollary~\ref{cor:Menger} shows that this is true in the case of hyperbolic right-angled Coxeter groups  defined by triangle-free graphs.

 In the setting of hyperbolic groups, Menger curve boundary is known to be generic~\cite{DGP11}.  Recently Haulmark--Hruska--Sathaye~\cite{HHS18} provide examples of large type (i.e.~not right-angled) Coxeter groups which are not hyperbolic and have visual boundary homeomorphic to the Menger curve.  Corollary~\ref{cor:Menger} provides a technique for constructing large classes of examples of right-angled Coxeter groups with Menger curve boundary.  The following example gives a concrete such class.

\begin{example}
One class of finite simplicial graphs which yield right-angled Coxeter groups with Menger curve boundary are the Mobius Ladders (see \url{https://en.wikipedia.org/wiki/Mobius\_ladder}) 
These graphs are inseparable and non-planar. By a result of Caprace~\cite{Cap09, Cap15} the right-angled Coxeter groups defined by these graphs have isolated flats; therefore, they satisfy the hypotheses of Corollary~\ref{cor:Menger} and have Menger curve boundary.

\end{example}

\subsection{Overview of the paper}  In Section~\ref{sec:prelim}, we give some preliminaries on right-
angled Coxeter groups and their boundaries.  In Section~\ref{sec:graph} we use graph-theoretic 
techniques to show that if there is not an induced $K_{3,3}$ subdivision in our non-planar graph $\G$, 
then by taking the double over some vertex finitely many times, we arrive at a graph $\G'$ such that 
$W_{\G'}$ is a finite-index subgroup of $W_\G$, and~$\G'$ either contains an induced $K_{3,3}$ 
subdivision or an induced subdivided copy of one of two specific graphs. (See Figure~\ref{fig:lambdas}.) One of the two specific graphs is $\B$ from Figure~\ref{fig-badgraph}.
In 
Section~\ref{sec:boundariesnonplanar} we show 
if the defining graph of a right-angled Coxeter group contains either 
 an induced $K_{3,3}$ subdivision, or an 
induced copy of one of the two specific graphs (the one not equal to $\B$), then any boundary  $\partial X_\G$ is non-planar.  
Finally in 
Section~\ref{sec:bad} we deal with the case of the remaining graph $\B$.  We show that  if any visual 
boundary of a right-angled Coxeter group is connected, locally connected, has no local cut points and 
is planar, then the defining graph of that right-angled Coxeter group cannot contain a copy of $\B$. 
Theorem~\ref{thm:main1} is proven in Section~\ref{sec:boundariesnonplanar}, while 
Theorem~\ref{thm:main2} is proven in Section~\ref{sec:bad}.

\section{Preliminaries} \label{sec:prelim} 


\subsection{Boundaries of $\CAT(0)$ Spaces}
\label{subsec:CAT0Boundary}
%
Let $X$ be a proper $\CAT(0)$ space.
The \emph{visual} or {\it $\CAT(0)$ boundary} of $X$, denoted $\bndry X$, is the set of equivalence classes of geodesic rays, where two rays $c_1,c_2\colon [0,\infty)\to X$ are equivalent if there exists a constant $D\geq 0$ such that $d\big(c_1(t),c_2(t)\big)\leq D$ for all $t\in[0,\infty)$.

The boundary $\bndry {X}$ comes equipped with a natural topology called the {\it cone topology}.
To define this topology, identify $\partial X$ with the set of geodesic rays based at some fixed point $p $ in $X$.  
Then if $c$ is a geodesic ray based at $p$, a basic open set around $c$ consists of 
geodesic rays based at $p$ whose projection onto a ball of radius $t$ around~$p$ is close to $c(t)$.

If $G$ acts geometrically on $X$ one would like to define $\bndry G$ to be $\bndry X$. If $G$ is a hyperbolic group, then $X$ is a Gromov hyperbolic metric space and $\partial X$ is the Gromov boundary of $G$.  In particular, in this case the $\CAT(0)$ boundary $\partial G$ is well-defined.  For example, if $G$ is virtually free, then the boundary of any $\CAT(0)$ space that $G$ acts on geometrically is a Cantor set.  Hruska--Kleiner have shown that $\bndry G$ is also well-defined in the setting of $\CAT(0)$ groups with isolated flats~\cite{HK1}. In general the homeomorphism type of the boundary  is not well-defined for $\CAT(0)$ groups (see~\cite{CK00, SchSta18}). However, for special subgroups of Coxeter groups, one can find a boundary for that special subgroup in any $\CAT(0)$ boundary for the Coxeter group, by the following lemma of Mihalik and Tschantz \cite{MT13}. Suppose that $(W, S)$ is a finitely generated Coxeter system, $\mathcal C$ is the Cayley graph of $W$ with
respect to $S$, and $W$ acts geometrically on a $\CAT(0)$ space $X$. Fix a point $x \in X$, and define a graph  $\mathcal C_x \subset X $ to have as vertices the orbit $W\cdot x$ and as edges the collection of $\CAT(0)$ geodesic paths connecting $wx$ and $wsx$, for $w \in W$ and $s \in S$.  Note that the collection of edges will be $W$-equivariant by uniqueness of $\CAT(0)$ geodesics. 

\begin{lemma}[\cite{MT13} Corollary 6.6] \label{lem:qc!} Suppose that $(W,S)$ is a finitely generated Coxeter group with Cayley graph $\mathcal C$, acting geometrically on the $\CAT(0)$ space $X$, and take an $x \in X$, and $P_x : \mathcal C \rightarrow \mathcal C_x$ mapping $\mathcal C$ quasi-isometrically and $W$-equivariantly into $X$. Then for each subset $A \subset S$,  (the image of) the subgroup  $<A>$ is quasi-convex in $X$. 
\end{lemma}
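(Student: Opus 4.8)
The plan is to deduce the statement from a single combinatorial fact — that special subgroups are convex in the Cayley graph — and then to transport this convexity across the $W$-equivariant quasi-isometry $P_x$ into $X$. The delicate point is that $P_x$ is merely a quasi-isometry and the Morse lemma fails in general $\CAT(0)$ spaces, so convexity cannot be pushed forward formally; the $\CAT(0)$ geometry of $X$ must be used to compensate. Throughout, write $W_A=\langle A\rangle$, let $Y=P_x(W_A)=W_A\cdot x\subset X$, and let $C_A$ denote the closed convex hull of $Y$ in $X$.

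First I would record the combinatorial input. The subgroup $W_A$ is a convex subset of $\mathcal C$: if $w_1,w_2\in W_A$ then $w_1^{-1}w_2\in W_A$ has a reduced expression using only letters of $A$ (deletion condition), so the edge-path from $w_1$ to $w_2$ spelled by this expression is a geodesic that never leaves $w_1W_A=W_A$. In fact $W_A$ is gated: each $w\in W$ has a unique nearest point $\pi(w)\in W_A$ with $d_{\mathcal C}(w,u)=d_{\mathcal C}(w,\pi(w))+d_{\mathcal C}(\pi(w),u)$ for all $u\in W_A$, so $\pi$ is a $1$-Lipschitz retraction. Since $P_x$ is a quasi-isometric embedding, $Y$ is a quasi-isometrically embedded, hence undistorted, copy of $W_A$ in $X$.

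To obtain quasi-convexity in $X$ I would pass to $C_A$. As $Y$ is $W_A$-invariant and isometries preserve closed convex hulls, $C_A$ is a $W_A$-invariant convex subset on which $W_A$ acts properly, and any $\CAT(0)$ geodesic between two points of $Y$ already lies inside $C_A$. Thus it suffices to prove that $Y$ is coarsely dense in $C_A$, i.e.\ that $C_A\subseteq N_D(Y)$ for some $D$: then every geodesic between points of $Y$ lies within $D$ of $Y$, which is exactly quasi-convexity. Equivalently, one must show that $W_A$ acts cocompactly on $C_A$. Here the $\CAT(0)$ tools enter through the nearest-point projection $\rho$ onto the convex set $C_A$, which is genuinely $1$-Lipschitz and, via the orthogonality inequality $d(p,q)^2\ge d(p,\rho(p))^2+d(\rho(p),q)^2$ for $q\in C_A$, supplies the multiplicative constant $1$ that the quasi-isometry cannot; combining this with the combinatorial convexity of $W_A$ is meant to force the convex hull back into a bounded neighbourhood of the orbit.

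The main obstacle is precisely this last step: showing $Y$ is coarsely dense in its own convex hull, equivalently that $W_A$ acts cocompactly on $C_A$. A naive transfer of the combinatorial gate through $P_x$ degrades its multiplicative constant from $1$ to the quasi-isometry constant $\lambda>1$, which destroys the cancellation — for $p$ on $[y_1,y_2]$ one wants $2\,d(p,Y)\lesssim \bigl(d(p,y_1)+d(p,y_2)\bigr)-d(y_1,y_2)=0$ — that would bound $d(p,Y)$. Recovering an effective constant-$1$ estimate is where the exact convexity of $W_A$ in $\mathcal C$ must be married to the convexity of the $\CAT(0)$ metric on $X$, rather than to any generic quasi-isometry argument; this coupling is the crux of the lemma.
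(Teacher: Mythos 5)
Your proposal is not a complete proof: it stops exactly where the content of the lemma begins. The parts you actually establish --- that $W_A$ is convex, indeed gated, in $\mathcal C$; that the orbit $Y=W_A\cdot x$ is undistorted in $X$; and that coarse density of $Y$ in its closed convex hull $C_A$ would imply quasi-convexity --- are correct but peripheral. The step you defer, namely that $Y$ is coarsely dense in $C_A$ (equivalently, that $W_A$ acts cocompactly on $C_A$), \emph{is} the lemma, and your final paragraph concedes that you do not have an argument for it: saying that the combinatorial convexity must be ``married'' to the convexity of the $\CAT(0)$ metric describes the difficulty rather than resolving it. For calibration, note that the paper itself does not prove this statement either; it is quoted from Mihalik--Tschantz \cite{MT13}, so the benchmark is their argument, which rests on Coxeter-specific structure rather than on any coupling of a gate map with nearest-point projection.

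The gap is essential, for reasons your own last paragraph half-identifies. First, the $1$-Lipschitz projection $\rho\colon X\to C_A$ gives no leverage: it moves points of $X$ onto $C_A$, but says nothing about how far points of $C_A$ can be from $Y$, and transporting the gate identity through $P_x$ loses a multiplicative factor that destroys the cancellation you need. Second, and more fundamentally, quasi-convexity is not an invariant of equivariant quasi-isometries in the absence of hyperbolicity: the subgroup $\langle ab\rangle$ of $\mathbb{Z}^2=\langle a,b\rangle$ is convex in the Cayley graph for the generating set $\{a,b,ab\}$ but fails to be quasi-convex in the (equivariantly quasi-isometric) Cayley graph for $\{a,b\}$. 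Hence no argument using only ``$W_A$ is convex in $\mathcal C$'' plus ``$P_x$ is an equivariant quasi-isometry into a $\CAT(0)$ space'' can succeed; one must use geometry special to Coxeter actions, e.g.\ that each reflection of $W$ acts on $X$ with a convex fixed-point set, and that these walls separate orbit points and pin $\CAT(0)$ geodesics between points of $W_A\cdot x$ back toward the orbit. Finally, your chosen reduction overshoots the target: controlling the genuine closed convex hull is a notoriously delicate problem in $\CAT(0)$ geometry (quasi-convex sets need not be coarsely dense in their convex hulls), whereas the lemma only requires controlling single geodesics between orbit points, i.e.\ one iteration of hull formation. So even if the lemma were granted, the statement you propose to prove instead would not follow formally, which is a sign the reduction is aimed at a harder statement, not an easier one.
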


 We will use the lemma below often to find circles and Cantor sets in the boundary $\partial X$ of some $\CAT(0)$ space for $W_\Gamma$. We will also use it to refer to points of the boundary.  For example, if $x$ and $y$ are disjoint vertices on $\Gamma$, the special subgroup defined by $x$ and $y$ is virtually cyclic and we will refer to the points of its boundary as $(xy)^\infty$ and $(yx)^\infty$. 

 \begin{lemma} \label{lem:inboundary} 

Let $\Gamma$ be a graph and $W_\Gamma$ the right-angled Coxeter group defined by~$\Gamma$. Suppose that $A$ is a subset of the vertices of $\Gamma$.  Let $X$ be any $\CAT(0)$ space with a geometric action of $W_\Gamma$.  Let $W_A$ be the special subgroup defined by $A$. Then there is a naturally embedded copy of a boundary of $W_A$ in $\partial X_\Gamma$. In particular, if $A$ induces a cycle in $\Gamma$ then there is a circle in $\partial X$. 
\end{lemma}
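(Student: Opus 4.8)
The plan is to apply Lemma~\ref{lem:qc!} to the subset $A$ and then invoke the standard principle that a quasi-convex subgroup of a $\CAT(0)$ group is convex-cocompact, so that a boundary of $W_A$ is realized as the boundary of a convex subspace of $X$. Since the vertices of $\G$ are exactly the Coxeter generators, $A$ is a subset of the generating set, and Lemma~\ref{lem:qc!} gives that the orbit $W_A\cdot x$ is quasi-convex in $X$; equivalently, the orbit map $w\mapsto wx$ is a quasi-isometric embedding of $W_A$ onto its image. I would let $C=\mathrm{Hull}(W_A\cdot x)$ be the closed convex hull of this orbit. Then $C$ is a closed convex subset of the proper $\CAT(0)$ space $X$, hence is itself a proper $\CAT(0)$ space, and it is $W_A$-invariant because the orbit is.

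Next I would verify that $W_A$ acts geometrically on $C$ and that $\partial C$ embeds in $\partial X$. Properness is inherited from the geometric action of $W_\G$ on $X$. Cocompactness is where quasi-convexity is used: it forces $C\subseteq N_R(W_A\cdot x)=\bigcup_{w\in W_A} w\,\overline{B}(x,R)$ for some $R$, so the $W_A$-translates of the single compact set $C\cap\overline{B}(x,R)$ cover $C$, whence $C/W_A$ is compact. Thus $\partial C$ is \emph{a} $\CAT(0)$ boundary of $W_A$ in the sense of the lemma. Because $C$ is convex in $X$, each geodesic ray of $C$ is a geodesic ray of $X$, and two such rays are asymptotic in $C$ if and only if they are asymptotic in $X$ (the metric on $C$ being the restricted one). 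This produces a $W_A$-equivariant injection $\partial C\hookrightarrow\partial X$, which is a homeomorphism onto its image (the limit set of $W_A\cdot x$) by the usual compatibility of cone topologies for a convex subspace. This is the desired naturally embedded copy of a boundary of $W_A$.

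Finally, for the last sentence, if the induced subgraph on $A$ is a cycle then $W_A=W_{C_n}$ with $n\ge 4$ in the triangle-free setting of this paper (a cycle of length three is a triangle, whose group is finite with empty boundary). For $n=4$ the group $W_{C_4}\cong D_\infty\times D_\infty$ acts geometrically on $\mathbb{R}^2$ with boundary $S^1$, and for $n\ge 5$ it is the reflection group of a right-angled hyperbolic $n$-gon, a cocompact Fuchsian group with Gromov boundary $S^1$. In either case $W_A$ is hyperbolic or has isolated flats, so its boundary is well defined and homeomorphic to $S^1$; hence the embedded copy of $\partial C$ in $\partial X$ is a circle.

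The step I expect to be the crux is the cocompactness of the $W_A$-action on $C$, that is, the claim $C\subseteq N_R(W_A\cdot x)$. In a general $\CAT(0)$ space the convex hull of a quasi-convex set need not lie in a bounded neighborhood of it, so this step genuinely uses that $X$ is cocompact under the $W_\G$-action; it is exactly the equivalence of quasi-convexity with convex-cocompactness for subgroups of $\CAT(0)$ groups, which I would either cite or else establish directly using extendability of geodesics in the cocompact setting. Everything else — properness, the boundary injection, and the cone-topology compatibility — is routine.
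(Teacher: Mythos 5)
Your proposal has the same architecture as the paper's proof: invoke Lemma~\ref{lem:qc!} to get quasi-convexity of the orbit $W_A\cdot x$, pass to a hull $C$ on which $W_A$ is supposed to act geometrically, use convexity of $C$ to embed $\partial C$ into $\partial X$, and identify the boundary as a circle in the cycle case via well-definedness of the boundary for virtual surface groups (the paper cites \cite{HK1} here, as you do in spirit). Your treatment of the boundary embedding and of the cycle case (including the observation that a $3$-cycle must be excluded, which the paper leaves implicit) is fine and slightly more careful than the paper's.

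The one substantive divergence is exactly the step you flag as the crux, and there your proposed fix is not available. The paper defines $C(W_A)$ to be the \emph{union of geodesics} between points of $W_A\cdot x$; by quasi-convexity this set lies in a $K$-neighborhood of the orbit, so cocompactness of the $W_A$-action is immediate, and what is left unaddressed is that this set is convex (hence $\CAT(0)$ with boundary embedded in $\partial X$). You instead take the genuine closed convex hull, so convexity, the $\CAT(0)$ property, properness, and the boundary embedding are all clean, but cocompactness becomes the issue --- and the ``standard principle that a quasi-convex subgroup of a $\CAT(0)$ group is convex-cocompact'' is not a theorem you can cite for general proper $\CAT(0)$ spaces. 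It is known for Gromov-hyperbolic spaces and for $\CAT(0)$ cube complexes (Haglund, Sageev--Wise), but for general $\CAT(0)$ spaces whether the convex hull of a quasi-convex set stays within bounded Hausdorff distance of it is a well-known open question, even in the cocompact setting. The natural attempt fails quantitatively: building the hull by iteratively adding geodesics, convexity of the metric only yields that if the $n$-th stage lies in $N_{K_n}(W_A\cdot x)$ then the $(n+1)$-st stage lies in $N_{K_n+K}(W_A\cdot x)$, so the bound grows by $K$ at every stage and no uniform $R$ emerges; geodesic extendability does not obviously repair this. So, as written, your proof has a genuine gap at the cocompactness step (one that mirrors, rather than exceeds, the gap in the paper's own wording): to close it you would need either to adopt the paper's formulation --- the orbit together with its connecting geodesics, cocompact by quasi-convexity --- and then justify its $\CAT(0)$-ness, or to work in a model (such as the Davis--Moussong cube complex) where cocompact convex cores for quasi-convex subgroups are actually known to exist.
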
 

\begin{proof}  Lemma \ref{lem:qc!} above implies that the orbit $W_A \cdot x$ of a point $x \in X$ under the action of $W_A$  is quasi-convex.  The convex hull  of $W_A\cdot x$ in $X$ is the union of all geodesics in $X$ connecting points of $W_A\cdot x$.  Call this convex hull $C(W_A)$. Then $W_A$ acts geometrically on $C(W_A)$ and $C(W_A)$ is $\CAT(0)$.   Consider the $\CAT(0)$ boundary of $X$ as rays from the basepoint $x$.  Then  $\partial C(W_A)$ is naturally a subset of $\partial X$, and $\partial C(W_A)$ is a boundary of $W_A$.  In the case that $A$ induces a cycle, the boundary of $W_A$ is well-defined \cite{HK1} and is a circle. 
\end{proof}

\subsection{Right-angled Coxeter groups}
\label{sec:RACG}

Let $\Gamma$ be a finite simplicial graph. The {\it right-angled Coxeter group}  associated to $\G$ has generating set $S$ equal to the vertices of $\Gamma$, relations $s^2=1$ for each $s$ in $S$ and relations $st = ts$ whenever $s$ and $t$ are adjacent vertices of $\G$. Given a graph $\Gamma$ we denote the associated right-angled Coxeter group by $W_\G$.  Right-angled Coxeter groups are canonical examples of groups with nice geometric structures.  For example, the right-angled Coxeter group on a path of length at least 3 can be realized as a Fuchsian group which acts geometrically on strict subset of $\mathbb{H}^2$. Hence any $\CAT(0)$ boundary of such a right-angled Coxeter group is either 2 points or a Cantor set.

A technique for finding index two subgroups of a right-angled Coxeter group that features heavily in the current paper is doubling. Let $v\in S$ and define $D_v\Gamma$  to be the graph obtained from $\Gamma$ by gluing two copies of $\Gamma$ along the star of $v$ then deleting $v$ and its open star in the new graph. We call $D_v\G$  {\it the double of $\G$ over~$v$}. If a vertex $s$ of $\Gamma$ is not in the link $\Lk(v)$ we use $s'$ to denote its double in $D_v\Gamma$. The following lemma is a folk result analogous to Example 1.4 of Bestvina--Kleiner--Sageev~\cite{BKS08} (which is in the setting of right-angled Artin groups). For the sake of brevity we will not include a proof here.

\begin{lemma}[Doubling Lemma]\label{lem:doubling}
Assume $\Gamma$ is a finite simplicial graph, let $v$ be a vertex of $\Gamma$, and set $\Delta=D_v\Gamma$. Then $W_{\Delta}$ is an index two subgroup of $W_{\Gamma}$.
\end{lemma}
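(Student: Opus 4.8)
The plan is to realize the doubling operation at the group level via a standard reflection-subgroup construction, matching the combinatorial double $D_v\Gamma$ on the graph side. Write $\Gamma$ with vertex set $S$, and let $\Delta=D_v\Gamma$ with vertex set $(S\setminus\{v\})\sqcup(S\setminus(\Lk(v)\cup\{v\}))'$, where the primed letters are the new copies of the vertices not adjacent to $v$. The target is a homomorphism $\phi\colon W_\Delta\to W_\Gamma$ whose image is the kernel of the length-parity-type map I build next, exhibiting $W_\Delta$ as an index-two subgroup.

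\emph{First} I would define the relevant index-two subgroup of $W_\Gamma$. Consider the homomorphism $\epsilon\colon W_\Gamma\to\mathbb{Z}/2$ sending the generator $v$ to $1$ and every other generator $s\in S\setminus\{v\}$ to $0$; this is well-defined since the defining relations $s^2=1$ and $st=ts$ are respected (each relator has even image). Let $H=\ker\epsilon$, an index-two subgroup consisting of the elements whose reduced words use the letter $v$ an even number of times. The claim will follow once I produce an isomorphism $W_\Delta\xrightarrow{\ \cong\ }H$.

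\emph{Next} I would write down the map on generators and check it lands in $H$. Send each unprimed generator $s\in S\setminus\{v\}$ to $s\in W_\Gamma$, and send each primed generator $s'$ to the conjugate $vsv$. Every image lies in $H$ since $s$ has $\epsilon$-value $0$ and $vsv$ has $\epsilon$-value $1+0+1=0$. To see this extends to a homomorphism $\phi\colon W_\Delta\to H$, I verify that the defining relations of $W_\Delta$ are satisfied by the images: the order-two relations are immediate (both $s$ and $vsv$ are involutions), and each commuting relation of $\Delta$ comes from an edge within one of the two copies of $\Gamma$, so it maps either to a commuting relation $st=ts$ of $\Gamma$ (for two unprimed vertices adjacent in $\Gamma$), to the conjugate $(vsv)(vtv)=(vtv)(vsv)$ of such a relation (for two primed vertices), or, for the ``seam'' vertices in $\Lk(v)$ that are not doubled, to relations expressing that a link vertex $\ell$ commutes with both $s$ and $vsv$ — the latter holding because $\ell$ commutes with $v$ and with $s$ in $\Gamma$. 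Then I would construct the inverse on generators of $H$, or equivalently show $\phi$ is surjective onto $H$ and injective by a normal-form/reflection-subgroup argument: $H$ is generated by the $\epsilon$-even reflections, and the set $\{\,s,\ vsv : s\in S\setminus\{v\}\,\}$ is exactly a Coxeter generating set for $H$ as a reflection subgroup of $W_\Gamma$, with Coxeter diagram matching $\Delta$ by a direct comparison of which pairs commute.

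\emph{The main obstacle} I expect is verifying that $\phi$ is an \emph{isomorphism} rather than merely a surjection, i.e.~that no extra relations hold among the images and that the reflection subgroup $H$ has precisely the Coxeter presentation encoded by $\Delta$. The clean way to dispatch this is to invoke the theory of reflection subgroups of Coxeter groups (Deodhar/Dyer): $H=\ker\epsilon$ is generated by reflections, its canonical Coxeter generating set consists of those reflections whose fixed walls bound the fundamental domain of $H$, and one checks that these are exactly $s$ and $vsv$ for $s\in S\setminus\{v\}$ with the commutation pattern of $\Delta$; the index computation then gives $[W_\Gamma:H]=2$ directly from $\epsilon$ being onto. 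Since the problem statement already flags this as a known folk result analogous to \cite{BKS08}, in the write-up I would either cite Dyer's reflection-subgroup theorem to certify the presentation of $H$, or carry out the elementary normal-form check that the word map $H\to W_\Delta$ inverting $\phi$ is well-defined, and then conclude $W_\Delta\cong H$ has index two in $W_\Gamma$.
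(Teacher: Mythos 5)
There is no paper proof to compare against here: the authors explicitly decline to prove the Doubling Lemma, calling it a folk result analogous to Example~1.4 of Bestvina--Kleiner--Sageev and omitting the argument ``for the sake of brevity.'' So your proposal must be judged on its own merits, and on those merits it is essentially correct and is the standard way this folk result is established. Your parity homomorphism $\epsilon$ is well defined, $H=\ker\epsilon$ has index two, the Reidemeister--Schreier generators of $H$ (with transversal $\{1,v\}$) are exactly $\{s,\,vsv : s\in S\setminus\{v\}\}$, noting that $vsv=s$ for $s\in\Lk(v)$, and your verification that the defining relations of $W_\Delta$ hold among these images covers all three edge types of $D_v\Gamma$ correctly (unprimed--unprimed, primed--primed, and seam edges using $\ell v=v\ell$). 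You also correctly identify the one genuinely nontrivial point: injectivity, i.e.\ that $H$ has no relations beyond those of $W_\Delta$. Both completions you name are legitimate, but be aware that the Dyer route requires more than ``a direct comparison of which pairs commute'': to match the Coxeter matrix of $\Delta$ you must also show that \emph{non}-adjacent pairs of canonical generators do \emph{not} commute in $W_\Gamma$ --- in particular that $s$ and $vtv$ never commute when $s,t\notin\Lk(v)\cup\{v\}$ --- which needs an argument (for instance, pass to the special subgroup $\langle v,s,t\rangle$, which is a free product $\langle v\rangle * \langle s,t\rangle$ or $\langle v\rangle *\langle s\rangle *\langle t\rangle$, where the word $svtvsvtv$ is visibly in normal form and hence nontrivial). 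A cleaner elementary completion, which packages surjectivity and injectivity at once and avoids all order computations, is the semidirect product argument: let $\sigma$ be the order-two graph automorphism of $\Delta$ swapping $s\leftrightarrow s'$ and fixing $\Lk(v)$, form $G=W_\Delta\rtimes_\sigma\langle t\mid t^2\rangle$, and write down mutually inverse homomorphisms $W_\Gamma\to G$ (sending $v\mapsto t$, $s\mapsto s$) and $G\to W_\Gamma$ (extending your $\phi$ by $t\mapsto v$); checking these on generators is routine, and it exhibits $W_\Delta$ as the index-two kernel of $\epsilon$ with no appeal to reflection-subgroup theory.
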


Associated to $\G$ is proper piecewise Euclidean $\CAT(0)$ complex $\Sigma_\G$ called the {\it Davis--Moussong} complex on which $\G$ acts geometrically. The space $\Sigma_\G$ is canonically constructed based solely on the combinatorial data of $\G$ (See~\cite{DavisBook} for the details of this construction.)



\subsection{Graph terminology}

Given a graph $\G$, an edge subdivision operation consists of adding a valence two vertex in the interior of an edge of $\G$.   A \emph{$\G$ subdivision} is a graph obtained from $\G$ by a (possibly trivial) sequence of edge subdivision operations.   A subgraph $\L$ of $\G$ is said to be 
\emph{induced} if every pair of vertices of $\L$ which are adjacent in $\G$ are also adjacent in 
$\L$ (i.e.~if $u$ and $w$ are vertices of $\L$, and $\G$ contains the edge $[u, v]$, then $\L$ does too). An essential vertex of $\G$ is any vertex of valence at least 3.  Vertices of valence two are called \emph{non-essential}.  A \emph{branch} of $\G$ is 
 an embedded path between essential vertices of the graph.  It contains its endpoints, but does not contain any other essential vertices.  The branch between a pair of essential vertices $x$ and $y$ will be denoted by $[x, y]$.  We will sometimes also use this interval notation for paths which are not necessarily branches, when there is no ambiguity.  
 
 A \emph{cycle} in $\G$ is an embedded loop.  We will denote cycles either by the essential vertices or by the paths that they visit.  For example, 
 if a cycle  passes through the essential vertices $v_1, \dots v_n$ in order, (so that $[v_i, v_{i+1}]$ is a branch for each $i$ (mod $n$),  then we will denote it by $(v_1, \dots v_n)$.  (We only use this notation for graphs in which every pair of essential vertices is connected by at most one branch.) 
On the other hand, if a cycle passes through embedded paths 
$\alpha_1, \dots, \alpha_n$ in $\G$ such that for each 
$i$ mod $n$, the terminal vertex of $\alpha_i$ is equal to the initial point of $\alpha_{i+1}$, then we denote it by $(\alpha_1, \dots, \alpha_n)$.  (Here we are not assuming that the $\alpha_i$ are branches.)

A graph is \emph{inseparable} if it is connected, has no separating complete subgraph, no cut pair, and no separating complete subgraph suspension. Obviously, a triangle-free graph is inseparable if and only if it is connected, has no separating vertex, no separating edge, no cut pair, and no separating vertex suspension.  This is equivalent to  the corresponding right-angled Coxeter group not splitting over a finite group or a virtually cyclic group \cite{P5, Sta68}.

\section{Graph theoretic results} \label{sec:graph} 

In this section we prove some graph theoretic results which are used in the next section to prove Theorem~\ref{thm:main1}. 

Let $\L\subset \G$ be a $\K$ (resp.~$K_5$) subdivision.  We say that a  vertex of $\L$ is 
 \emph{$\L$-essential} if it has valence bigger than 2 in $\L$, and \emph{$\L$-non-essential} if its valence in $\L$ is 2.  Note that given a vertex of $\L$, its valence in $\G$ could be higher than its valence in $\L$ (so in particular, a $\L$-non-essential vertex could be an essential vertex of $\G$).  
If $\L\subset \G$ is a $\K$ subdivision, a
\emph{vertex partition} for $\L$ is a partition of the $\L$-essential vertices into two sets of three vertices, such that every vertex in the first set is connected to every vertex of the second set by a branch of $\L$. 
By a \emph{shortest} graph with a given property, we will mean a graph having the fewest edges with that property. 

Kuratowski's Theorem says that a graph is planar if and only if it contains either a $K_5$ subdivision or a $\K$ subdivision. We begin with the following lemma, which will enable us to ignore the $K_5$ case when we are trying to establish the non-planarity of boundaries of right-angled Coxeter groups defined 
by non-planar graphs.

\begin{lemma}\label{lem:K5} 
Let $\G$ be a triangle-free graph which contains a $K_5$ subdivision $\Lambda$.  Then 
either $\G$ or the double of $\G$ over some vertex contains a $\K$ subdivision. 
\end{lemma}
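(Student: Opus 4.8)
The plan is to realize the desired $\K$ subdivision inside a single double $D_v\G$, where the doubling vertex $v$ is chosen among the branch vertices of the given $K_5$ subdivision $\Lambda$. Write $v_1,\dots,v_5$ for the $\Lambda$-essential vertices and $P_{ij}$ for the branch of $\Lambda$ joining $v_i$ and $v_j$. Triangle-freeness enters through the following observation: if three branches $P_{ij},P_{ik},P_{jk}$ were all unsubdivided (honest edges of $\G$), then $v_iv_jv_k$ would be a triangle. Hence the graph $F$ on $\{v_1,\dots,v_5\}$ whose edges are exactly the \emph{unsubdivided} branches is triangle-free.

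First I would locate a good doubling vertex. By Mantel's theorem a triangle-free graph on five vertices has at most $\lfloor 25/4\rfloor=6$ edges, so $\sum_i\deg_F(v_i)=2|E(F)|\le 12$ and some vertex, say $v_5$, has $d:=\deg_F(v_5)\le 2$. Relabel so that the unsubdivided branches at $v_5$ run to $v_1,\dots,v_d$; then $v_1,\dots,v_d\in\Lk(v_5)$, while each branch $P_{j5}$ with $j>d$ is subdivided and contributes its first interior vertex $n_j\in\Lk(v_5)$, with $v_j\notin\Lk(v_5)$. Note $d\le 2$ forces at least two branches at $v_5$ to be subdivided, so doubling genuinely produces new vertices.

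Next I would read off the structure of $D_{v_5}\G$ on the two copies of $\Lambda$. Deleting the essential vertex $v_5$ from $\Lambda$ leaves a $K_4$ subdivision on $\{v_1,\dots,v_4\}$ together with pendant arcs $v_j\rightsquigarrow n_j$ for $j>d$. Doubling therefore glues two $K_4$ subdivisions, one on $\{v_1,\dots,v_4\}$ and its mirror on $\{v_1,\dots,v_d,v_{d+1}',\dots,v_4'\}$, along the shared branch vertices $v_1,\dots,v_d$; moreover, for each $j>d$ the shared vertex $n_j$ splices the two halves of $P_{j5}$ into an arc $v_j\rightsquigarrow n_j\rightsquigarrow v_j'$. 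All these arcs are internally disjoint and meet only at the shared vertices, so contracting each branch to an edge yields a graph $H$ that is precisely two copies of $K_4$ sharing a set of $d$ vertices and joined by a matching on the remaining $4-d$ corresponding pairs. Any $\K$ subdivision found in $H$ lifts back, arc by arc, to a genuine $\K$ subdivision in $D_{v_5}\G$.

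The crux is the final step: exhibiting $\K$ in $H$, which I expect to be the main obstacle and which I would settle by treating the three cases $d\in\{0,1,2\}$ explicitly. Since $\K$ has maximum degree $3$, it suffices to produce a $\K$ \emph{minor} of $H$. The cleanest case is $d=2$: the six vertices $v_1,v_2,v_3,v_4,v_3',v_4'$ span a copy of $K_6$ missing exactly the two independent edges $v_3v_4'$ and $v_4v_3'$, and taking parts $\{v_1,v_3,v_4'\}$ and $\{v_2,v_4,v_3'\}$ realizes $\K$ outright (the only two nonedges lie inside a single part). For $d=1$ one contracts the rung $v_2\rightsquigarrow v_2'$ to a vertex $w$ and checks that $\{v_1,v_3,v_4'\}$ versus $\{w,v_4,v_3'\}$ is a $\K$. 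For $d=0$ one uses the standard fact that two disjoint copies of $K_4$ joined by a perfect matching contain a $\K$ minor (via the two-vertex branch sets $\{v_1,v_2\},\{v_3,v_4\}$ on one copy against the four singletons on the other). In every case $D_{v_5}\G$ contains a $\K$ subdivision, completing the proof.
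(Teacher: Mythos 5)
Your overall strategy --- double over a branch vertex $v_5$ of $\Lambda$ chosen via Mantel's theorem so that at most two of its incident branches are unsubdivided, then find $\K$ in the resulting ``two $K_4$'s glued along $d$ vertices plus a matching'' --- is genuinely different from the paper's, but it has a gap at exactly the point where the paper does its real work. Your structural description of $D_{v_5}\G$ is valid only if $\Lk(v_5)\cap\Lambda$ consists precisely of the $d$ adjacent essential vertices together with the first interior vertex $n_j$ of each subdivided branch $P_{j5}$. Nothing in your argument rules out additional edges of $\G$ (not in $\Lambda$) from $v_5$ to other vertices of $\Lambda$: to a vertex deeper inside an incident branch, to the interior of a branch $P_{ij}$ with $i,j\neq 5$, or to another essential vertex. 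Any such vertex lies in $\Lk(v_5)$ and is therefore identified between the two copies in the double, so the two copies of the $K_4$ subdivision intersect in more vertices than you list; your arcs are then not internally disjoint, and a $\K$ subdivision found in $H$ need not lift to an embedded one in $D_{v_5}\G$. This is precisely the issue the paper spends most of its proof handling: it takes $\Lambda$ to be a \emph{shortest} $K_5$ subdivision (which kills chords from an essential vertex to its own incident branches) and treats separately the case of a chord from an essential vertex to a disjoint branch, which yields a $\K$ subdivision in $\G$ itself --- this is the reason the lemma says ``either $\G$ or the double.'' Your proof never uses the ``$\G$ itself'' alternative and never constrains $\Lk(v_5)$, so as written it is incomplete.

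Separately, your $d=0$ case is incorrect as stated. In two disjoint $K_4$'s joined by a perfect matching, the neighbors of $\{v_1,v_2\}$ outside that set are $v_3,v_4,v_1',v_2'$; so if $\{v_1,v_2\}$ and $\{v_3,v_4\}$ are both branch sets of one part, every branch set of the opposite part adjacent to $\{v_1,v_2\}$ must contain $v_1'$ or $v_2'$, and hence at most two of the three required adjacencies can be realized (moreover ``four singletons'' does not fit the $3+3$ pattern of $\K$). The conclusion is still true: this graph is $K_4\,\square\,K_2$, which is non-planar and $4$-connected, and an explicit $\K$ minor is given by the parts $\{v_1\},\{v_2'\},\{v_3,v_3'\}$ and $\{v_1'\},\{v_2\},\{v_4,v_4'\}$. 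Your $d=1$ and $d=2$ constructions do check out, and the passage from $\K$ minors to $\K$ subdivisions via the maximum-degree-$3$ fact is fine; so the combinatorial core is salvageable, but the proof needs the missing control on extra adjacencies between $v_5$ and $\Lambda$, for instance by importing the paper's shortest-subdivision argument and its direct construction of a $\K$ in $\G$ when a chord to a disjoint branch exists.
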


\begin{proof}
Choose a shortest $K_5$ subdivision $\L$ in $\G$. 
Let $a, b, c, d, e$ be the $\L$-essential vertices.  
Since $\L$ is shortest, given any $\L$-essential vertex, say $a$, there cannot be a $\G$-edge between $a$ and some a vertex $x$ on a branch of $\L$ incident to $a$, unless $x$ is adjacent to $a$ in $\L$. 

Next suppose there is a $\G$-edge between some $\L$-essential vertex and some $\L$-non-essential vertex on a branch of $\L$ disjoint from it. (See Figure~\ref{fig:K5K33}.)  For definiteness, say there is an edge from $a$ to the $\L$-non-essential vertex  
$f$ in the interior of $[d,e]$.  Then there is a $\K$ subdivision with vertex partition $\{a, e, d\}$ and $\{b, c, f\}$, as shown in Figure~\ref{fig:K5K33}.  This completes the proof of the lemma in this case. 
\begin{figure}[h!]
\begin{center}
\begin{overpic}[scale=0.7]{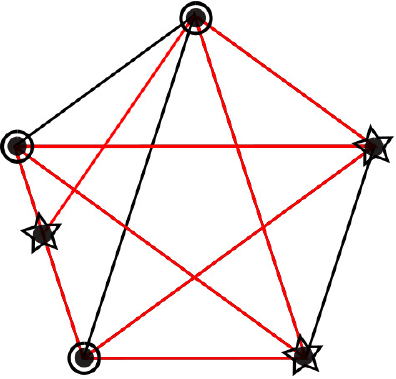}
\put(45, 98){\small $a$}
\put(102, 55){\small $b$}
\put(84, 0){\small $c$}
\put(7, 0){\small $d$}
\put(-8, 55){\small $e$}
\put(-3, 32){\small $f$}
\end{overpic}
\end{center}
\caption {}
\label{fig:K5K33}
\end{figure}

From the previous two paragraphs, we may assume for the remainder of the proof that if a $\L$-essential vertex $a$  is $\G$-adjacent to a vertex $x$ of $\L$, then $a$ is adjacent to $x$ in $\L$.

We claim that by relabeling the $\L$-essential vertices if necessary, we may assume  that $a$ is not adjacent to vertices $b$ and $c$. i.e.~the branches $[a, b]$ and $[a, c]$ of $\L$ are subdivided.  To see this, note that if there exist two $\L$-essential vertices not adjacent to $a$, then we can simply relabel these as $b$ and $c$.  Otherwise, $a$ is adjacent to at least three $\L$-essential vertices, say $b, c, d$.  Now since $\G$ is triangle-free, each of $[b, c]$, $[c, d]$ and $[b, d]$ is subdivided, and we can rename $b$ to $a$, and $d$ to $b$.  This proves the claim.

We will now produce a 
$\K$ subdivision in $D_a\G$, assuming that $b$ and $c$ are not adjacent to $a$.   
By our assumption in the third paragraph, the link of $a$ intersected with $\L$ consists of exactly four vertices, one on each branch incident to $a$.  The vertices $d$ and $e$ could be among these.  
Let $\bar \G$ and $\bar \G'$ both denote $\G$ minus the open star of $a$, and recall that $D_a\G$ is obtained by identifying $\bar \G$ and $\bar\G'$ along the link of $a$. For each vertex $x$ of $\G$, let $x$ and $x'$ be the corresponding vertices in $\bar \G$ and  $\bar \G'$ respectively. 
Since $b$ and $c$ are not adjacent to $a$ in $\G$, we have that $b' \neq b$ and $c' \neq c$ in $D_a\G$.   However, we could have $d=d'$ or $e=e'$.

\begin{figure}[h!]
\begin{center}
\begin{overpic}[scale=0.7 ]{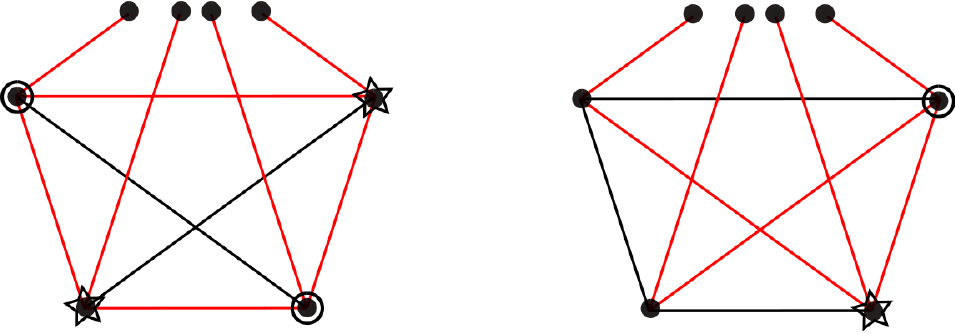}
\put(-12, 12){\small $\bar \G$}
\put(107, 12){\small $\bar \G'$}
\put(42, 22){\small $ b$}
\put(35, 0){\small $ c$}
\put(3, 0){\small $ d$}
\put(-4, 22){\small $ e$}
\put(12, 36){\small $ e_1$}
\put(17, 36){\small $ d_1$}
\put(21.5, 36){\small $c_1$}
\put(26, 36){\small $ b_1$}
\put(71, 36){\small $ e_1$}
\put(76, 36){\small $ d_1$}
\put(80.5, 36){\small $c_1$}
\put(86, 36){\small $ b_1$}
\put(101, 22){\small $ b'$}
\put(94, 0){\small $ c'$}
\put(62, 0){\small $ d'$}
\put(56, 22){\small $ e'$}
\end{overpic}
\end{center}
\caption {The graph $D_a\G$ is obtained by identifying $\bar\G$ and $\bar \G'$ along 
$b_1, c_1, d_1$, and $e_1$.  Thus for example, the path from $d$ to $b'$ in $D_a\G$ consists of the branch $[d, d_1]$ in $\bar\G$ followed by the branches $[d_1, d']$ and $[d', b']$. 
When $a$ is adjacent to $d$ (respectively $e$), then $d=d_1=d'$ 
(respectively $e=e_1=e'$).  }
\label{fig:K5double}
\end{figure}

We claim that 
there is a $\K$ subdivision in $D_a\G$ with vertex partition $\{b, d, c'\}$ and $\{c, e, b' \}$.  This is shown in Figure~\ref{fig:K5double} in the case when $d \neq d'$ and $e \neq e'$.  When $d=d'$, the path from $b'$ to $d$ in Figure~\ref{fig:K5double} is replaced by the branch in $\bar \G'$ from $b'$ to $d'=d$.  Similarly, when $e=e'$, the path from $c'$ to $e$ shown in Figure~\ref{fig:K5double} is replaced by the branch in $\bar \G'$ from $c'$ to $e'=e$.  
Since the link of $a$ in $\L$ consists of exactly four vertices, the subdivision constructed above is embedded in $D_a\G$.  
\end{proof}

The $\K$ subdivision present in a non-planar graph $\G$ may not be an induced subgraph.  
In Proposition~\ref{prop:lambdas} we show that by successively doubling $\G$ along vertices finitely many times, one can find a subgraph that is either an  induced $\K$ subdivision, or one of two specific graphs.

\begin{proposition}\label{prop:lambdas}
Let $\G$ be a triangle-free graph which contains 
a $\K$
 subdivision. 
 Then there exists a graph $\G'$ 
obtained from $\G$ by  a sequence of doubling moves, and a $\K$ subdivision
$\Lambda' \subseteq \G'$ such that either $\L'$ is induced or  the subgraph of $\G'$ induced by $\L'$ is 
one of the graphs in Figure~\ref{fig:lambdas}.
\end{proposition}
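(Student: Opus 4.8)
The plan is to follow the template of Lemma~\ref{lem:K5}: fix a \emph{shortest} $\K$ subdivision $\L\subseteq\G$ and study its \emph{chords}, the edges of $\G$ that join two vertices of $\L$ but are not edges of $\L$. The conclusion says that after doublings the induced subgraph on the subdivision is either chord-free (so $\L'$ is induced) or is one of the two pictures of Figure~\ref{fig:lambdas}; so the strategy is to peel off chords by doubling until we land on a stable configuration. I would run this as an induction whose measure is the number of chords of the chosen $\K$ subdivision, arranging each doubling move to output an embedded $\K$ subdivision with strictly fewer chords.

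First I would record what minimality forbids. Choose a vertex partition and label the $\L$-essential vertices $x_1,x_2,x_3$ and $y_1,y_2,y_3$, with the nine branches $[x_i,y_j]$. Exactly as in Lemma~\ref{lem:K5}, shortness kills every chord that would shorten a branch: there is no chord from an $\L$-essential vertex to an interior vertex of an incident branch (other than its $\L$-neighbor), no chord between two interior vertices of one branch, and, since a branch already joins each $x_i$ to each $y_j$, no chord $[x_i,y_j]$. Hence every surviving chord is of one of three kinds: \textbf{(A)} between two essential vertices of the same part, $[x_i,x_j]$ or $[y_i,y_j]$; \textbf{(B)} between an essential vertex and an interior vertex of a branch disjoint from it; or \textbf{(C)} between interior vertices of two distinct branches. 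Triangle-freeness then constrains type~(A): if $[x_i,x_j]$ is a chord then for each $k$ at most one of $[x_i,y_k],[x_j,y_k]$ is a single edge, so the relevant branches are subdivided.

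The reduction splits according to whether a chord touches an essential vertex. Every chord of type~(A) or~(B) is incident to an essential vertex $a$, and I would remove it by doubling over $a$: since $D_a\G$ deletes $a$ but keeps $\Lk(a)$, all chords at $a$ vanish simultaneously, and---copying the picture in Figure~\ref{fig:K5double}---the three branches once routed through $a$ reassemble through the two glued copies $\bar\G,\bar\G'$. Concretely, using the double $b'$ of a suitable essential vertex $b$ not adjacent to $a$, one obtains an embedded $\K$ subdivision of $D_a\G$ in which $a$ has been replaced by $b'$, its three new branches running from $b'$ through $\Lk(a)$ to the opposite part. A type-(C) chord joins two non-essential vertices; here I would first reroute, promoting an endpoint (which has $\G$-valence at least three) to a new essential vertex, thereby either exposing a \emph{shorter} subdivision, which minimality rules out, or turning the chord into one incident to an essential vertex, handled as above.

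The endgame is to show that the only chord patterns surviving every such move are the two graphs of Figure~\ref{fig:lambdas}. When no doubling or rerouting lowers the measure, only a small triangle-free pattern of same-part chords can remain, and I would enumerate these and check which ones resist all further doublings. The hard part will be exactly this bookkeeping. For each chord type and each vertex chosen for doubling, one must verify that the reassembled subdivision is genuinely \emph{embedded}---this uses that $\Lk(a)\cap\L$ consists of precisely the expected branch-neighbors---and, above all, that the chord count \emph{strictly} drops. The delicate point is that doubling over $a$ can duplicate chords not incident to $a$ into the second copy $\bar\G'$; controlling these duplicated chords, so that the net count decreases and the induction actually terminates at the two claimed graphs rather than at some larger list of exceptions, is where essentially all the difficulty lies. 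This is the uniform version of the single hand-checked move carried out in Lemma~\ref{lem:K5}.
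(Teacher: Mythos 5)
Your high-level strategy---fix a minimal $\K$ subdivision, classify its bad edges (your ``chords'') via minimality, and double so as to strictly decrease their number until only exceptional patterns survive---is indeed the paper's strategy. But your proposal defers exactly the two points where the content of the proof lies, and the mechanisms you sketch for them would not work. First, the strict decrease. Your move for chords of types (A) and (B) is to double over an essential endpoint $a$ and rebuild a subdivision ``through the two glued copies.'' Any such rebuilt subdivision necessarily routes three long branches through $\Lk(a)$ using $\bar\G'$ together with most of $\L$ in $\bar\G$, so it contains two copies of large portions of $\L$; every bad edge of $\L$ between those portions is duplicated, and further edges through $\Lk(a)$ can join the two copies. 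Nothing in your setup prevents the chord count from going \emph{up}, and you say yourself that controlling this ``is where essentially all the difficulty lies''---that is, the inductive step is missing. The paper's solution is structurally different: one doubles over a \emph{non-essential} endpoint $v$ of a bad edge (Lemmas~\ref{lem:double} and~\ref{lem:non-ess}). The new subdivision is $\bar\L$, the old one minus the open star of $v$, lying entirely in one copy, completed by a path $\gamma$ in the \emph{other} copy joining the two $\Lk(v)$-vertices $u,w$ on $v$'s branch, where $\gamma$ is chosen \emph{shortest} subject to meeting $\bar\G$ only in $\{u,w\}$. That choice forces every bad edge of the new subdivision to correspond to a bad edge of $\L$ not incident to $v$, so $B$ strictly drops; Lemmas~\ref{lem:adjacent} and~\ref{lem:edge} (both using triangle-freeness) are what guarantee such a $\gamma$ exists. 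Relatedly, your measure needs the paper's combined normalization (``shortest among all $\K$ subdivisions with at most $B(\L)$ bad edges''): with ``shortest overall'' you cannot restart the argument in $D_v\G$ after a doubling, since the shortest subdivision there may have many more chords than the one you just produced.

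Second, the endgame. Your claim that once no move applies ``only a small triangle-free pattern of same-part chords can remain'' is false: there is a type-(B) configuration that resists every doubling, namely the one in Figure~\ref{fig:vbwu}---$v$ non-essential on $[a,x]$ with $[a,v]$ and $[v,x]$ edges, $v$ adjacent to the essential vertex $b$ and to non-essential vertices $u,w$ on the two branches through $c$. Your moves give no reduction here, and indeed the paper does not reduce the count in this case at all: it applies Lemma~\ref{lem:non-ess} again to $w$ and then $u$, which propagates the same rigid pattern and produces an induced $\K$ subdivision outright (Figure~\ref{fig:induced}). Only after all non-essential endpoints of bad edges are disposed of does the same-part-chord analysis (your type (A)) take over and yield, via the case analysis in Figures~\ref{fig:isolated} and~\ref{fig:four-edges}, exactly the two graphs of Figure~\ref{fig:lambdas}. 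Without anticipating the exceptional configuration, your enumeration has no reason to terminate at those two graphs rather than a larger, unidentified list---which is precisely what the proposition must rule out. A smaller error of the same kind: your type-(C) ``rerouting'' does not convert the chord into one at an essential vertex; as in Lemma~\ref{lem:edge}, minimality only yields the structural conclusion that the third disjoint branch is an edge, and it is this conclusion (not a new chord) that feeds the hypothesis of Lemma~\ref{lem:double}.
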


\begin{figure}[h!]
\begin{center}
\begin{overpic}[scale=0.7]{fig-lambdas1.pdf}
\put(75, 75){\small $ a$}
\put(79, -4){\small $ b$}
\put(-6, 35){\small $ c$}
\put(22, 75){\small $ x$}
\put(102, 35){\small $ y$}
\put(18, -4){\small $ z$}
\end{overpic}
\hspace{1.5cm}
\begin{overpic}[scale=0.8]{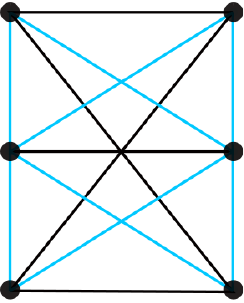}
\put(-10, 94 ){\small $ a$}
\put(84, 94){\small $ x$}
\put(-10, 50){\small $ b$}
\put(84, 50){\small $ y$}
\put(-10, 0){\small $ c$}
\put(84, 0){\small $ z$}
\end{overpic}
\end{center}
\caption {The figure shows the two possible graphs induced by $\L'$ in $\G'$ in Proposition~\ref{prop:lambdas}.  In both pictures, $\L'$ is the $\K$ subdivision with vertex partition 
$\{a, b, c\}$ and $\{x, y, z\}$.  The  black paths may  be subdivided, while the blue ones are edges.  There are no edges of
$\G' \setminus \Lambda'$ connecting any pair of vertices in the graphs shown.}
\label{fig:lambdas}
\end{figure}

We begin by introducing some terminology and proving several lemmas to be used in the proof.  
In all of the proofs below, we assume that $\L$ has $\L$-essential vertex sets $\{a, b, c\}$ and $\{x, y, z\}$. 
A \emph{branch} of $\L$ is the unique path between a pair of $\L$-essential vertices which does not pass through any additional $\L$-essential vertices.  We assume that there are branches of $\L$ connecting each of $a, b,$ and $c$ to each of $x, y$, and $z$, and 
we will denote the branch between $\L$-essential vertices, say $a$ and $x$, by $[a, x]$.  We will sometimes also use this interval notation to denote sub-paths of branches or edges.  A branch or path $[s,t]$ will always include its endpoints $s$ and $t$.  Two branches will be called \emph{adjacent} if they share an $\L$-essential vertex endpoint.

If $\L \subset \G$ is not induced, we define a \emph{bad edge} of $\L$ to be an edge in 
$E(\G) \setminus E(\L)$ whose endpoints are both vertices of $\L$, and we define $B(\L)$ to be the number of bad edges of $\L$.  Observe that $\L$ is induced if and only if $B(\L)=0$.

\subsubsection*{Idea of the proof.} 
If the $\K$ subdivision $\L$ is not induced, it has a non-trivial set of bad edges.  If one doubles $\G$ over the endpoint $v$ of a bad edge of $\L$, then  that edge disappears in $D_v\G$.  So the general strategy is to double over endpoints of bad edges of $\L$ and to find a new $\K$ subdivision in the double which has fewer bad edges than $\L$.  Then after finitely many steps we end up with either an induced $\K$ subdivision or one of the graphs in Figure~\ref{fig:lambdas}.

The double of $\L$ in $D_v\G$ typically has many more bad edges than $\L$ itself, and as a result, finding a $\K$ subdivision in $D_v\G$ with fewer bad edges than $\L$  can be a nontrivial feat.  To aid this process, we do two things.  
Firstly, we start with a shortest subdivision $\L$, and we show in Lemma~\ref{lem:adjacent} that this restricts the types of bad edges that may occur in $\L$.  (Types of bad edges may be differentiated based on whether they connect disjoint or adjacent branches, and whether their endpoints are $\L$-essential or not.) 
Secondly, we choose the order of vertices to double over carefully.  

Initially, we focus on reducing the number of bad edges which have at least one endpoint a $\L$-non-essential vertex.  In particular, we
show in Lemmas~\ref{lem:double} and~\ref{lem:non-ess}, that if $v$ is a $\L$-non-essential vertex 
which is the endpoint of a bad edge of $\L$, then unless the edges incident to $v$ have a specific configuration (shown in Figure~\ref{fig:vbwu}), the double $D_v\G$ does contain a $\K$ subdivision with fewer bad edges than $\L$.  Then, in the proof of Proposition~\ref{prop:lambdas}, we show how to deal with the problematic configuration given in the statement of Lemma~\ref{lem:non-ess}, Figure~\ref{fig:vbwu}. This eliminates all bad edges which have at least one $\L$-non-essential vertex as an endpoint.  Also in  the proof of Proposition~\ref{prop:lambdas},
we resolve the case in which all bad edges have 
$\L$-essential vertices as both their endpoints.  In is in this case that the graphs in Figure~\ref{fig:lambdas} arise.

\bigskip
We begin with Lemma~\ref{lem:adjacent}, which puts restrictions on the type of bad edges one could see in a shortest $\K$ subdivision with a given number of bad edges.

\begin{lemma}\label{lem:adjacent}
Let $\G$ be a triangle-free graph containing a $\K$
 subdivision $\L$.  
If $\L$ is shortest among all $\K$
 subdivisions with at most $B(\L)$ bad edges, then $\L$ has no bad edges of the following types:
\begin{enumerate}
\item Bad edges with both endpoints on a single branch of $\L$.
\item  
Bad edges  connecting non-$\L$-essential vertices on adjacent branches.  
 \end{enumerate}
 \end{lemma}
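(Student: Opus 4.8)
The plan is to prove both parts by contradiction, in each case using a forbidden bad edge to reroute $\L$ into a strictly shorter $\K$ subdivision $\L'$ with $B(\L')\le B(\L)$, contradicting the minimality hypothesis. A single bookkeeping principle will control the bad-edge count throughout: since every rerouting I use only deletes vertices and edges (and adds the offending bad edge), we have $V(\L')\subseteq V(\L)$, so any bad edge $f$ of $\L'$ has both endpoints in $V(\L)$ and lies outside $\L'$. Hence either $f\notin\L$, in which case $f$ was already a bad edge of $\L$, or $f\in\L$, in which case $f$ is one of the edges I deleted; it then suffices to check that each deleted edge has an endpoint that was removed from the vertex set, so that it cannot recur as a bad edge of $\L'$. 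Since the edge $e$ being exploited becomes part of $\L'$, this will give $B(\L')\le B(\L)-1$.

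For part (1), suppose a bad edge $e=\{u,w\}$ has both endpoints on one branch, say $[a,x]=(a=v_0,v_1,\dots,v_n=x)$ with $u=v_i$, $w=v_j$ and $i<j$. Because $e\notin E(\L)$, the vertices $v_i,v_j$ are non-consecutive, so $j\ge i+2$. I would form $\L'$ by replacing the sub-path $v_i,v_{i+1},\dots,v_j$ with the single edge $e$, deleting the interior vertices $v_{i+1},\dots,v_{j-1}$ and leaving all other branches untouched. This is again a $\K$ subdivision with the same essential vertices and strictly fewer edges, and every deleted edge $\{v_\ell,v_{\ell+1}\}$ has an endpoint among $v_{i+1},\dots,v_{j-1}$, so by the bookkeeping above $B(\L')\le B(\L)-1$, a contradiction.

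For part (2), suppose a bad edge $e=\{u,w\}$ joins non-$\L$-essential vertices $u$ and $w$ lying in the interiors of two branches $[a,x]$ and $[a,y]$ sharing the essential vertex $a$. Write the portions of these branches from $a$ out to $u$ and to $w$ as $P_1$ (length $k\ge 1$) and $P_2$ (length $m\ge 1$), and the remaining portions as $Q_1$ (from $u$ to $x$) and $Q_2$ (from $w$ to $y$). I would promote $u$ to an essential vertex and build $\L'$ with parts $\{u,b,c\}$ and $\{x,y,z\}$, keeping every branch from $b$ and $c$ and setting $[u,x]=Q_1$, $[u,y]=e\cup Q_2$, and $[u,z]=P_1\cup[a,z]$; this discards $P_2$ entirely, so $a$ becomes an ordinary valence-two vertex on $[u,z]$. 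One checks that the only vertices shared by the new branches are the essential ones, so $\L'$ is an embedded $\K$ subdivision; its edge count drops by $m-1$, and the bookkeeping gives $B(\L')\le B(\L)-1$ since the deleted interior vertices of $P_2$ kill every deleted edge while $e$ now lies in $\L'$. Thus $m\ge 2$ yields a contradiction, and the symmetric move promoting $w$ and discarding $P_1$ handles $k\ge 2$. I expect the main obstacle to be the leftover case $k=m=1$: here $u$ and $w$ are both adjacent to $a$, so $a,u,w$ form a triangle, which is impossible since $\G$ is triangle-free. This disposes of the final case and completes the plan.
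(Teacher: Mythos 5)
Your proof is correct and takes essentially the same approach as the paper: part (1) is the direct shortcut argument (which the paper dismisses as obvious), and your part (2) rerouting---promoting the endpoint of the bad edge whose opposite path to $a$ is subdivided, discarding that path, and invoking triangle-freeness to rule out the case $k=m=1$---is exactly the paper's construction of a shorter $K_{3,3}$ subdivision with vertex partition $\{x,y,z\}$ and $\{v,b,c\}$. Your explicit bookkeeping of why deleted edges cannot reappear as bad edges merely spells out what the paper leaves implicit in its figure.
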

\begin{proof}
Item (1) is obvious. For (2),   
suppose $\L$ has an edge connecting 
non-$\L$-essential vertices $v$ and $w$ on two adjacent branches, say 
$[a, x]$ and $[a, y]$ (see Figure~\ref{fig:adjacent}).  Since $\G$ is triangle-free, the path from $a$ to one of these vertices, say $w$, must be subdivided.  
Then there is a shorter $\K$ subdivision with at most $B(\L)$ bad edges
(having vertex partition $\{x, y, z\}$
and $\{v, b, c\}$)
as shown in Figure~\ref{fig:adjacent}.  This is a contradiction. 
\end{proof}
\begin{figure}[h!]
\begin{center}
\begin{overpic}[scale=0.6]{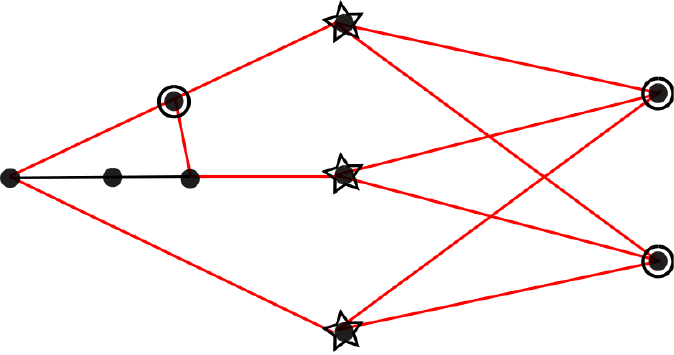}
\put(-7, 22){\small $ a$}
\put(102, 37){\small $ b$}
\put(102, 13){\small $ c$}
\put(49, 54){\small $ x$}
\put(49, 33){\small $ y$}
\put(49, -5){\small $ z$}
\put(21, 42){\small $ v$}
\put(28, 18){\small $ w$}
\end{overpic}
\end{center}
\caption {Consider the $\K$ subdivision shown, with $\L$-essential vertex sets $\{x, y, z\}$
and $\{v, b, c\}$.  It is shorter than $\L$ and each of its bad edges is already a bad edge of $\L$.} 
\label{fig:adjacent}
\end{figure}

The next step  is to begin doubling over 
$\L$-non-essential vertices, and to find $\K$ subdivisions with fewer bad edges in the double.  
We now know that bad edges between two $\L$-non-essential vertices must go between two disjoint branches.  
  Lemma~\ref{lem:edge} gives a useful consequence of the existence of such a bad edge.  This will be used in Lemma~\ref{lem:double}, where we find a $\K$ subdivision in the double over an endpoint of such an edge.

\begin{lemma}\label{lem:edge}
Let $\G$ be a triangle-free graph, 
 and let $\L$ be shortest among all $\K$
 subdivisions with at most $B(\L)$ bad edges.
 Suppose there is a bad edge connecting 
 non-$\L$-essential vertices on disjoint branches $\alpha$ and $\beta$ of $\L$.  Then the unique branch of $\L$ disjoint from $\alpha$ and $\beta$ is an edge. 
\end{lemma}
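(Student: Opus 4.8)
The plan is to argue by contradiction from the minimality of $\L$: assuming the branch opposite to $\alpha$ and $\beta$ is subdivided, I will reroute $\L$ through the given bad edge to produce a strictly shorter $\K$ subdivision with no more bad edges, contradicting the choice of $\L$. First I would normalize notation. Let $e$ be the bad edge, with endpoints $v \in \alpha$ and $w \in \beta$ both non-$\L$-essential. Since each of $a,b,c$ is joined to each of $x,y,z$ by a branch, and $\alpha,\beta$ are disjoint (share no $\L$-essential endpoint), after relabeling the two sides of the partition I may assume $\alpha = [a,x]$ and $\beta = [b,y]$; the unique branch disjoint from both is then $[c,z]$. Suppose for contradiction that $[c,z]$ is subdivided, so it has an interior vertex and hence at least two edges. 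Let $v$ split $[a,x]$ into subpaths $[a,v]$ and $[v,x]$, and let $w$ split $[b,y]$ into $[b,w]$ and $[w,y]$.

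\textbf{The replacement subdivision.} I would build $\L'$ with bipartition classes $\{v,b,y\}$ and $\{w,a,x\}$, realizing the nine connecting paths as: $v$--$w$ by the edge $e$; $v$--$a$ and $v$--$x$ by $[a,v]$ and $[v,x]$; $w$--$b$ and $w$--$y$ by $[b,w]$ and $[w,y]$; $b$--$x$ by the old branch $[b,x]$ and $y$--$a$ by $[a,y]$; and finally $b$--$a$ by $[b,z]\cup[z,a]$ (so that $z$ becomes an interior vertex) and $y$--$x$ by $[y,c]\cup[c,x]$ (so that $c$ becomes interior). These nine paths are internally disjoint because they are assembled from the internally disjoint branches of $\L$ together with the single edge $e$, so $\L'$ is a genuine embedded $\K$ subdivision. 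The essential bookkeeping identity is $E(\L') = \bigl(E(\L)\setminus E([c,z])\bigr)\cup\{e\}$: the subdivision $\L'$ uses every branch of $\L$ except $[c,z]$ and adds only $e$.

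\textbf{Deriving the contradiction.} Since $[c,z]$ has at least two edges while $\L'$ replaces it by the single edge $e$, the identity above gives that $\L'$ has strictly fewer edges than $\L$. For the bad-edge count, observe that $V(\L') \subseteq V(\L)$, as we only discard the interior vertices of $[c,z]$. Hence any bad edge $f$ of $\L'$ has both endpoints in $V(\L)$; moreover $f \notin E(\L)$, since the only edges in $E(\L)\setminus E(\L')$ lie on $[c,z]$ and each such edge has an endpoint interior to $[c,z]$, hence outside $V(\L')$; and $f \neq e$ because $e \in E(\L')$. Therefore every bad edge of $\L'$ is already a bad edge of $\L$, so $B(\L') \le B(\L)$ (in fact $e$ is no longer bad). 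This exhibits a $\K$ subdivision shorter than $\L$ with at most $B(\L)$ bad edges, contradicting the minimality of $\L$; thus $[c,z]$ cannot be subdivided, i.e.\ it is an edge.

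\textbf{Main obstacle.} I expect the delicate point to be the bad-edge comparison $B(\L') \le B(\L)$, rather than the length count. It hinges on the two observations that $V(\L') \subseteq V(\L)$ and that no surviving edge of the discarded branch $[c,z]$ has both endpoints in $V(\L')$; verifying that $\L'$ is embedded and correctly bipartite is the other point requiring care, though it follows routinely from the corresponding property of $\L$.
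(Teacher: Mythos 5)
Your proof is correct and is essentially the paper's own argument: the paper likewise reroutes $\L$ through the bad edge to obtain a $\K$ subdivision with vertex partition $\{v,c,z\}$ and $\{w,a,x\}$ (in its labeling the bad edge joins $[a,x]$ to $[c,z]$ and the discarded branch is $[b,y]$), which is shorter than $\L$ and has at most $B(\L)$ bad edges unless the disjoint branch is a single edge. Your write-up merely makes explicit the embeddedness check and the bad-edge bookkeeping that the paper delegates to its figure.
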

\begin{proof}
Assume without loss of generality that there is an edge connecting non-$\L$-essential vertices $v$ and $w$ on branches $[a, x]$ and $[c, z]$ respectively.  
If $[b,y]$, the unique branch of $\L$ disjoint from $[a, x]$ and $[c, z]$,  
is not an edge, then one obtains a shorter $\K$
 subdivision with at most $B(\L)$ bad edges,
(having vertex partition 
 $\{v, c, z\}$
and $\{w, a, x\}$) 
as shown in Figure~\ref{fig:edge}. 
\end{proof}

\begin{figure}[h!]
\begin{center}
\begin{overpic}[scale=0.7]{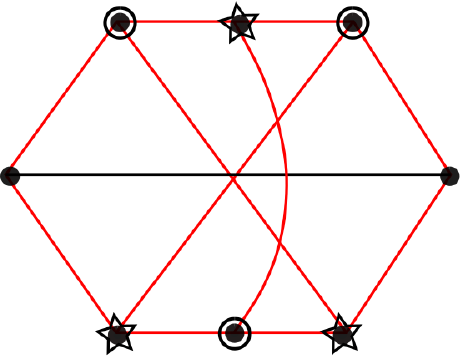}
\put(22, 79){\small $ x$}
\put(72,79){\small $ a$}
\put(102, 36){\small $ y$}
\put(-6, 36){\small $ b$}
\put(22, -7){\small $ z$}
\put(72, -7){\small $ c$}
\put(50, 79){\small $ v$}
\put(50, -7){\small $ w$}
\end{overpic}
\end{center}
\caption{The $\K$ subdivision shown, with $\L$-essential vertex sets 
 $\{v, c, z\}$
and $\{w, a, x\}$, has at most $B(\L)$ bad edges.  It
is shorter than $\L$ unless $[b,y]$ is an edge. 
}
\label{fig:edge}
\end{figure}

The next two lemmas deal with finding $\K$ subdivisions in doubles $D_v\G$, where $v$ is a $\L$-non-essential vertex which is the endpoint of a bad edge. 
Lemma~\ref{lem:double} gives a criterion on $v$ which guarantees that $D_v\G$ has a 
$\K$ subdivision with fewer bad edges.  This will be used in Lemma~\ref{lem:non-ess}.

\begin{lemma}\label{lem:double}
Let $\G$ be a triangle-free graph,
 and let $\L$ be shortest among all $\K$
 subdivisions with at most $B(\L)$ bad edges.
Let $v$ be a non-$\L$-essential vertex on a branch $\alpha$ of $\L$ such that $v$ is an endpoint of a bad edge.  If there is a branch $\beta$ which is disjoint from $\alpha$ such that $\beta$ does not intersect the link of $v$, then there exists a $\K$ subdivision $\L' \subseteq D_v(\L)$ with $B(\L') < B(\L)$. 
\end{lemma}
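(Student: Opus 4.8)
The plan is to analyze the double $D_v\G$ explicitly and build the desired subdivision almost entirely inside one copy of $\G$, using the deletion of $v$ to kill a bad edge while repairing the branch it breaks. Write $\alpha$ as a branch with essential endpoints, say $[a,x]$, and let $p,q$ be the two $\L$-neighbors of $v$ on $\alpha$. In $D_v\G$ the vertex $v$ is deleted and the vertices of $\Lk_\G(v)$ are identified between the two copies $\bar\G$ and $\bar\G'$. Consequently $\alpha$ breaks at $v$: the copy-$1$ and copy-$2$ halves $[a,p]$ and $[a',p]$ join at the shared vertex $p$ into a single path from $a$ to $a'$, and similarly $[q,x]$ and $[q,x']$ join at $q$ into a path from $x$ to $x'$, while every branch of $\L$ other than $\alpha$ is cleanly doubled. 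The first observation is that any bad edge of $\L$ incident to $v$ simply disappears in $D_v\G$, since $v$ is no longer present; this is the source of the strict decrease we are after.

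Next I would locate the other endpoint $w$ of the bad edge at $v$. Because $vw\in E(\G)$, the vertex $w$ lies in $\Lk_\G(v)$ and is therefore identified between the two copies, so in $D_v\G$ it acquires valence four and serves as a crossing linking $\bar\G$ and $\bar\G'$ along the branch $\gamma$ through $w$. Since $v$ is non-$\L$-essential and $\L$ is shortest among $\K$ subdivisions with at most $B(\L)$ bad edges, Lemma~\ref{lem:adjacent} rules out $w$ lying on $\alpha$ or (when $w$ is non-$\L$-essential) on a branch adjacent to $\alpha$, so $\gamma$ is disjoint from $\alpha$, and Lemma~\ref{lem:edge} further constrains the configuration. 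The crossing at $w$, together with the two repaired paths $a$--$a'$ and $x$--$x'$ coming from $\alpha$, supplies exactly the connectivity needed to route a replacement for the broken branch from copy $1$ into copy $2$ and back.

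Finally I would assemble the new $\K$ subdivision $\L'$: keep eight of its branches inside the single copy $\bar\G$ and reroute the ninth (the broken image of $\alpha$) through the $w$-crossing, using the hypothesized branch $\beta$---disjoint from $\alpha$ and missing $\Lk_\G(v)$---as the clean copy-$1$ branch along which this detour travels. The role of the requirement that $\beta$ miss $\Lk_\G(v)$ is that such a branch is doubled without any identifications, so its copy is embedded and crossing-free, which is what lets the detour be realized by an embedded path. The main work, and the real obstacle, is the bad-edge bookkeeping for $\L'$: one must check that $\L'$ contains at most one of the two copies $e,e'$ of each bad edge $e\neq vw$ of $\L$ and creates no genuinely new bad edges, so that $B(\L')$ is bounded by the number of bad edges of $\L$ not incident to $v$; combined with the disappearance of the bad edge at $v$, this gives $B(\L')<B(\L)$. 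Controlling these bad edges---ensuring the copy-$2$ detour stays clean and that passing through the shared vertices $p,q,w$ does not silently reintroduce doubled bad edges---is precisely where the shortest-subdivision hypotheses (via Lemmas~\ref{lem:adjacent} and~\ref{lem:edge}) and the cleanness of $\beta$ are used.
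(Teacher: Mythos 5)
Your high-level strategy coincides with the paper's: keep $\L$ minus the open star of $v$ as $\bar\L$ in one copy $\bar\G$, reroute the broken branch $\alpha$ through the other copy, and get the strict decrease because every bad edge at $v$ disappears in $D_v\G$. But the two steps that constitute the actual proof are missing or wrong. First, your routing fails as described: you send the detour ``through the $w$-crossing,'' where your $w$ is the far endpoint of a bad edge at $v$. That vertex lies on a branch of $\L$ which you are retaining inside $\bar\G$, so a detour passing through it meets $\bar\L$ at an interior point of a kept branch, and the union is then not an embedded $\K$ subdivision. In the paper the identified vertices (i.e.\ $\Lk(v)$) are exactly what the detour must \emph{avoid}, except at the two break points of $\alpha$ (your $p,q$, the paper's $u,w$): with $\alpha=[a,x]$ and $\beta=[c,z]$, the paper exhibits the explicit path $\delta=[u,a]\cup[a,z]\cup\beta\cup[c,x]\cup[x,w]$ and uses Lemma~\ref{lem:adjacent} together with the hypothesis on $\beta$ to show $\delta$ meets $\Lk(v)$ only at $u$ and $w$; hence its copy in $\bar\G'$ meets $\bar\G$ only at those two points and repairs $\alpha$. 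The crossing at the bad-edge endpoint plays no constructive role whatsoever---the bad edge matters only for the count. (Also, your inference that the branch through that endpoint is disjoint from $\alpha$ can fail when the endpoint is $\L$-essential, a case Lemma~\ref{lem:adjacent} does not exclude.)

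Second, the bad-edge bookkeeping, which you yourself flag as ``the main work, and the real obstacle,'' is never carried out, and your sketch contains no mechanism that could carry it out. The paper's device is to take $\gamma$ to be a \emph{shortest} path in $\bar\G'$ from $u$ to $w$ among those meeting $\bar\G$ only at $u$ and $w$; this choice is what forces every bad edge of $\L'=\bar\L\cup\gamma$ to have both endpoints in $\bar\G$, hence to correspond to a bad edge of $\L$, while the bad edge(s) incident to $v$ have no counterpart, yielding $B(\L')<B(\L)$. Without such a minimality (or equivalent) choice, edges of $D_v\G$ incident to the interior of the detour could produce new bad edges and break the inequality. So the skeleton points in the right direction, but the embedded detour avoiding all identifications and the control of the bad edges of the resulting subdivision---the substance of the lemma---are absent.
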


\begin{proof}
Assume without loss of generality that 
$\alpha=[a, x]$ and that $\beta=[c,z]$ is disjoint from the link of $v$. 
See Figure~\ref{fig:v-alpha}.
We
know by Lemma~\ref{lem:adjacent}(2) that the interiors of the branches $[a, z]$ and $[x, c]$ are disjoint from the link of $v$, and by Lemma~\ref{lem:adjacent}(1) that the link of $v$ intersects $[a,x]$ in exactly two vertices, $u$ and  
$w$, with $u$ possibly equal to $a$ and $w$ possibly equal to $x$.  
\begin{figure}[h!]
\begin{center}
\begin{overpic}[scale=0.7]
{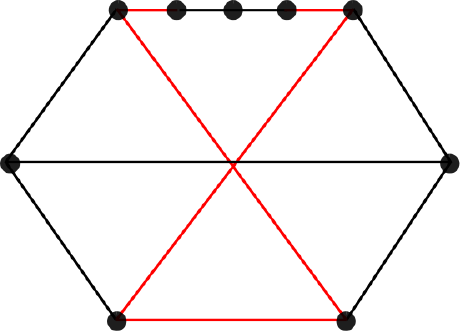}
\put(22, 75){\small $ x$}
\put(72,75){\small $ a$}
\put(101, 36){\small $ y$}
\put(-6, 36){\small $ b$}
\put(22, -7){\small $ z$}
\put(72, -7){\small $ c$}
\put(36, 75){\small $ w$}
\put(48, 75){\small $ v$}
\put(59, 75){\small $ u$}
\put(48, 61){\small $\alpha$}
\put(48, -7){\small $\beta$}
\put(67, 16){\small $\delta$}
\end{overpic}
\end{center}
\caption{The link of $v$ consists of $u$, $w$ and possibly some vertices in $[z, b] \cup [b, y]\cup [y, c]$.
The path $\delta$ is shown in red.}
\label{fig:v-alpha}
\end{figure}
It follows that the path $\delta$ from $u$ to $w$ which consists of the concatenation of $[u, a], [a, z], [z, c], [c, x], [x, w] $ intersects the link of $v$ only in $u$ and $w$.

Now let $\bar \G$ and $\bar \G'$ be two copies of $\G$ with the open star of $v$ removed.  The double $D_v\G$ is formed by identifying $\bar \G$ and $\bar \G'$ along the copy of the link of $v$ in each.  
Define $\bar\L \subseteq \bar\G$ to be the copy of $\L$ minus the open star of $v$ in $\bar \G$, and note that $\bar\L$
would be a $\K$ subdivision if we added a path between $u$ and $w$ which is disjoint from $\bar\L$.
There is a copy of the path $\delta$ constructed above in $\G'$, which intersects $\bar\G$ (and 
hence $\bar \L$) only at $u$ and $w$. Let $\gamma$ be the shortest path in $\bar\G'$ between $u$ and $w$ which which intersects $\bar\G$ only at $u$ and $w$.
Form $\L'$ by identifying $\bar\L$ and $\gamma$ along $\{u, w\}$.  (This is shown in Figure~\ref{fig-double} in the case $\gamma = \delta$.)
Then $\L'$ is a $\K$ subdivision in $D_v\G$.

\begin{figure}[h!]
\begin{center}
\begin{overpic}[scale=0.7]{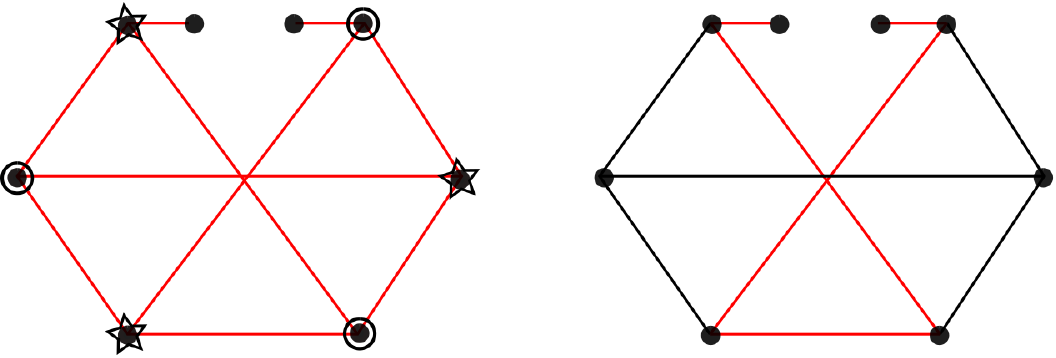}
\put(-12, 4){\small $ \L_1\subseteq \G_1$}
\put(95, 4){\small $ \L_2 \subseteq \G_2$}
\put(9, 34){\small $ x$}
\put(18, 34){\small $ w$}
\put(28, 34){\small $ u$}
\put(34, 34){\small $ a$}
\put(65, 34){\small $ x'$}
\put(73, 34){\small $ w$}
\put(83, 34){\small $ u$}
\put(90, 34){\small $ a'$}
\put(46, 15){\small $ y$}
\put(-5, 15){\small $ b$}
\put(9, -4){\small $ z$}
\put(31, -4){\small $ c$}
\put(65, -4){\small $ z'$}
\put(87, -4){\small $ c'$}
\put(84, 10){\small $ \gamma$}

\end{overpic}
\end{center}
\caption{The two graphs shown are identified along $\mathrm{Lk}(v)$ in $D_v\G$.  In particular, they are identified along $u$ and $w$, and possibly some additional vertices on 
$[z, b] \cup [b, y]\cup [y, c]$.
The red graph on the left is $\bar\L$.  The red path shown on the right is $\gamma$, in the case that $\gamma= \delta$.   The choice of $\gamma$ ensures that no vertex on it is identified with a vertex in $\bar \G$.    
}
\label{fig-double}
\end{figure}

If $e$ is a bad edge of $\L'$, our choice of $\gamma$ implies that the endpoints of $e$ are in $
\bar\G$.  Thus there is a bad edge of $\L$ in $\G$ that corresponds to $e$.  On the other hand, there is at least one bad edge of $\L$ incident to $v$, for which there is no corresponding bad edge of $\L'$.  Thus $B(\L') < B(\L)$. 
\end{proof}

The following lemma shows that if $v$ is $\L$-non-essential and is the endpoint of a bad edge of $\L$, then $D_v\G$ does contain a $\K$ subdivision with fewer bad edges than $\L$, except possibly in one particular situation.  
(This situation is specified in conditions (1) and (2) of the lemma).

\begin{lemma}\label{lem:non-ess}
Let $\G$ be a triangle-free graph,
 and let $\L$ be shortest among all $\K$
 subdivisions with at most $B(\L)$ bad edges.
Let $v$ be a non-$\L$-essential vertex on a branch $\alpha$ of $\L$ such that $v$ is an endpoint of a bad edge.
 Then either there exists a $\K$ subdivision $\L' \subseteq D_v(\L)$ with $B(\L') < B(\L)$ or 
both of the following statements hold.  (See Figure~\ref{fig:vbwu}.) 
 \begin{enumerate}
 \item The vertex $v$ is adjacent to exactly one 
 $\L$-essential vertex $s$ of $\L$ which is not an endpoint of $\alpha$, and to at least one non-$\L$-essential vertex on each of the two branches that are disjoint from both $\alpha$ and $s$.  
 
 \item The branch $\alpha$ consists of exactly the two edges incident to $v$.

\medskip

\begin{figure}[h!]
\begin{center}
\begin{overpic}[scale=0.7]{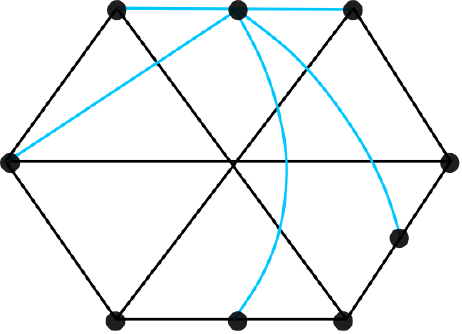}
\put(25, 74){\small $ x$}
\put(50, 74){\small $ v$}
\put(77, 74){\small $ a$}
\put(-26, 35){\small $ s= b$}
\put(103, 35){\small $ y$}
\put(88, 12){\small $ u$}
\put(75, -6){\small $ c$}
\put(50, -6){\small $ w$}
\put(25, -6){\small $ z$}
\put(60, 77){\small $ \alpha$}
\end{overpic}
\end{center}
\caption{This illustrates the configuration from Lemma~\ref{lem:non-ess} in the case that $v$ lies on $[a,x]$, and $v$ is adjacent to $b$.  Then the lemma says that  $v$ must be adjacent to at least one vertex on each of $[c, y]$ and $[c,z]$, and that $[v, x]$ and $[v,a]$ are edges. 
}
\label{fig:vbwu}
\end{figure}

 \end{enumerate}
\end{lemma}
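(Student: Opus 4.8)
The plan is to assume that no reduction is possible, i.e.\ that $D_v\G$ contains no $\K$ subdivision $\L'$ with $B(\L') < B(\L)$, and to deduce conditions (1) and (2). Normalize so that the branch carrying $v$ is $\alpha = [a,x]$; then the four branches disjoint from $\alpha$ are exactly $[b,y]$, $[b,z]$, $[c,y]$, $[c,z]$, which form a $4$-cycle $(b,y,c,z)$. Under the irreducibility assumption, the contrapositive of Lemma~\ref{lem:double} shows that every one of these four branches meets $\Lk(v)$. By Lemma~\ref{lem:adjacent}, the only vertices that can be joined to $v$ by a bad edge are the $\L$-essential vertices $b,c,y,z$ and interior vertices of the four disjoint branches; moreover, by Lemma~\ref{lem:edge}, a bad edge from $v$ into the interior of one disjoint branch forces the \emph{opposite} branch of the $4$-cycle to be a single edge (opposite pairs being $[b,y]/[c,z]$ and $[b,z]/[c,y]$). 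These two facts drive the whole argument.

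First I would show that $v$ is adjacent to at least one of $b,c,y,z$. If not, each disjoint branch meets $\Lk(v)$ only at an interior vertex, so $v$ has a bad edge into the interior of every disjoint branch; but a bad edge into the interior of $[b,y]$ forces $[c,z]$ to be an edge, which then has no interior for $v$ to meet, a contradiction. Next I would show there is \emph{exactly} one such essential neighbor. Write $N = \Lk(v)\cap\{b,c,y,z\}$. If $N$ contains two vertices of the same part, say $b,c$, I would build a strictly shorter $\K$ subdivision with no more bad edges by promoting $v$ to an essential vertex: take the vertex partition $\{a,b,c\}$ and $\{v,y,z\}$, joining $v$ to $a$ along $[a,v]\subseteq\alpha$ and to $b,c$ along the bad edges $vb,vc$, and retaining the six branches from $a,b,c$ to $y,z$. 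This discards $x$ together with $[b,x]$, $[c,x]$ and the segment $[v,x]$, so it is shorter, and since its only new edges $vb,vc$ were already bad edges of $\L$ while its other edges are branch-edges of $\L$, every bad edge of the new subdivision is a bad edge of $\L$; this contradicts minimality. If instead $N$ has no same-part pair, then $N$ is a single mixed (hence branch-joined) pair, say $\{b,y\}$; triangle-freeness forces $[b,y]$ to be subdivided, yet $v$ is adjacent to neither $c$ nor $z$, so $[c,z]$ must meet $\Lk(v)$ at an interior vertex, and Lemma~\ref{lem:edge} then forces $[b,y]$ to be an \emph{edge}, a contradiction. Hence $|N|=1$.

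Relabelling, I may assume the unique essential neighbor is $s=b$. Then $[b,y]$ and $[b,z]$ already meet $\Lk(v)$ at $b$, while $[c,y]$ and $[c,z]$, since $v$ is adjacent to none of $c,y,z$, must each meet $\Lk(v)$ at an interior vertex. This is exactly statement (1), the two branches disjoint from both $\alpha$ and $s=b$ being precisely $[c,y]$ and $[c,z]$. Applying Lemma~\ref{lem:edge} to these two interior bad edges also yields, as a byproduct, that $[b,y]$ and $[b,z]$ are both edges.

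The remaining step, statement (2), that $\alpha$ consists of exactly the two edges $[a,v]$ and $[v,x]$, I expect to be the main obstacle, because the naive reduction is unavailable: one checks that $\{b\}$ together with the interior neighbors of $v$ on $[c,y]$ and $[c,z]$ forms a cut separating the two ends $u,w$ of the broken branch $\alpha$ in the mirror copy, so the reconnecting-path construction underlying Lemma~\ref{lem:double} can never be carried out over $v$ in this configuration, \emph{regardless of the length of} $\alpha$. Thus (2) cannot come from doubling over $v$ and must be forced by minimality of $\L$ directly. The plan is to suppose, say, that $[v,x]$ has length at least two and to exhibit a strictly shorter $\K$ subdivision with at most $B(\L)$ bad edges in $\G$ itself, rerouting through the edge $vb$ and the now-established edges $[b,y]$, $[b,z]$; the delicate point is to keep all nine branches internally disjoint and create no new bad edges. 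Concretely I would attempt to pass first to the equal-length, equal-bad-edge-count subdivision produced by the Lemma~\ref{lem:edge} construction on one of the two interior bad edges (which turns $v$ into an essential vertex and makes $[v,x]$ one of its branches), and then re-run the essential/non-essential analysis of the previous paragraphs on the vertex $b$ in that new subdivision, aiming to convert any excess length of $\alpha$ into a genuine shortcut and hence a contradiction with minimality.
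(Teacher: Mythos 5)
Everything in your proposal up through condition (1) is correct and is essentially the paper's own argument: you use the same three ingredients (Lemma~\ref{lem:adjacent}, Lemma~\ref{lem:edge}, and Lemma~\ref{lem:double}, the last in contrapositive form), the same case split on $N=\Lk(v)\cap\{b,c,y,z\}$, and the same reductions. Your shorter subdivision for the same-part case, with partition $\{a,b,c\}$ and $\{v,y,z\}$ discarding $[v,x]$, $[b,x]$, $[c,x]$, is exactly the red graph on the left of the paper's Figure~\ref{fig:vbc-vbwu}, and your mixed-pair contradiction is the paper's "adjacent to one from each pair" case read backwards. The byproduct you note, that $[b,y]$ and $[b,z]$ must be edges, also agrees with the paper.

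The genuine gap is condition (2): you never prove it, you only announce a plan ("I would attempt to\dots"). The paper closes this step with one more explicit rerouting, of the same elementary kind as your same-part construction (the red graph on the right of Figure~\ref{fig:vbc-vbwu}): assuming (1) holds and one of $[a,v]$, $[v,x]$ is not an edge, consider the $\K$ subdivision with parts $\{b,u,w\}$ and $\{v,c,y\}$, whose nine connecting paths are the three bad edges $vb$, $vu$, $vw$; the sub-branches $[c,u]$, $[c,w]$, $[y,u]$; the edge $[y,b]$; the path $[c,x]\cup[x,b]$; and the path $[y,a]\cup[a,z]\cup[z,w]$. This discards the branch $[a,x]$ entirely (together with the edge $[b,z]$) at the cost of three added edges, so it is strictly shorter than $\L$ precisely when $[a,x]$ has length at least $3$; moreover $vb$, $vu$, $vw$ cease to be bad while the only possible new bad edges are $bz$ and at most one of $va$, $vx$, so the bad-edge count does not increase. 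This contradicts the minimality of $\L$ and forces (2). Your proposed substitute --- passing to the equal-length subdivision produced by Lemma~\ref{lem:edge} and re-running the whole analysis at the vertex $b$ --- is not carried out, and nothing guarantees it terminates in a contradiction: you set up no invariant that converts "excess length of $\alpha$" into progress, and the vertex $b$ in the new subdivision need not sit in a configuration your earlier lemmas apply to. (Your side claim that no reduction in $D_v\G$ is possible "regardless of the length of $\alpha$" is also unjustified as stated, since your cut argument only inspects paths inside $\L$ rather than in $\G$; but nothing in the lemma depends on that claim.) So roughly the first two-thirds of your proof coincides with the paper's, and the final third --- the actual construction forcing (2) --- is missing.
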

\begin{proof}
Without loss of generality, assume $v$ lies on $\alpha=[a, x]$.  
First we consider the case that $v$ is not adjacent to any of the $\L$ essential vertices $b, c, y,$ or $z$.   In this case we show that $D_v\G$ contains a $\K$ subdivision $\L' \subseteq D_v(\L)$ with $B(\L') < B(\L)$. 
By hypothesis, there is a bad edge incident to $v$, and by Lemma~\ref{lem:adjacent}, its other endpoint has to be on a branch disjoint from $\alpha$, say (without loss of generality)  
$[c,z]$.  Then by Lemma~\ref{lem:edge}, the branch $[b,y]$ is an edge.  
Since by assumption $v$ is not adjacent to $b$ or $y$, we may apply 
Lemma~\ref{lem:double} with $\beta = [b,y]$, to conclude that 
 there exists a $\K$ subdivision $\L' \subseteq D_v(\L)$ with $B(\L') < B(\L)$. 

Thus we may assume that $v$ is adjacent to at least one of $b, c, y,$ and $z$.  Now we analyze a few cases. 
If $v$ is adjacent to both $b$ and $c$, then one obtains a shorter $\K$ subdivision with at most $B(\L)$ bad edges, as shown on the left in 
Figure~\ref{fig:vbc-vbwu}, which is a contradiction, since $\Lambda$ was chosen to be shortest.  Thus $v$ is adjacent to at most one of $b$ and $c$, and similarly, $v$ is adjacent to at most one of $y$ and $z$. 

Suppose $v$ is adjacent to exactly one from each pair, say $b$ and $y$.  Then since $\G$ is triangle-free, 
$[b,y]$ is not an edge.  Applying Lemma~\ref{lem:edge}, we conclude that $v$
 is not adjacent to any vertex in $[c, z]$.  Then by Lemma~\ref{lem:double}, with $\beta = [c, z]$, 
 there exists a $\K$ subdivision $\L' \subseteq D_v(\L)$ with $B(\L') < B(\L)$.

\begin{figure}[h!]
\begin{center}
\begin{overpic}[scale=0.7]{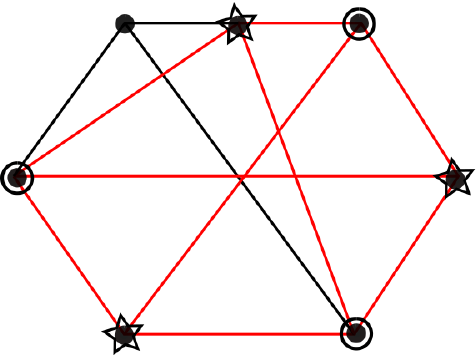}
\put(25, 76){\small $ x$}
\put(50, 76){\small $ v$}
\put(77, 76){\small $ a$}
\put(-6, 35){\small $ b$}
\put(103, 35){\small $ y$}
\put(75, -6){\small $ c$}
\put(25, -6){\small $ z$}

\end{overpic}
\hspace{1.5cm}
\begin{overpic}[scale=0.7]{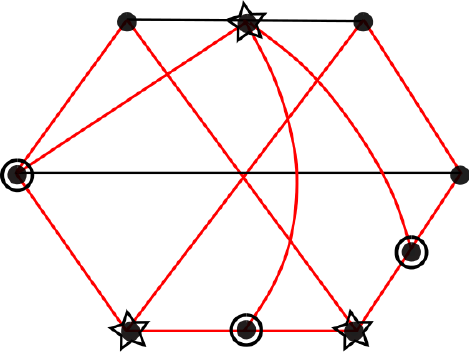}
\put(25, 76){\small $ x$}
\put(50, 76){\small $ v$}
\put(77, 76){\small $ a$}
\put(-6, 35){\small $ b$}
\put(103, 35){\small $ y$}
\put(88, 12){\small $ u$}
\put(75, -6){\small $ c$}
\put(50, -6){\small $ w$}
\put(25, -6){\small $ z$}
\end{overpic}
\end{center}
\caption{ If $v$ is connected to both $b$ and $c$, then one obtains the red graph on the left.  It is shorter than $\L$ and has at most $B(\L)$ bad edges.  If $v$ is adjacent to $b, w, $ and $u$ as shown, and if one of $[a, v]$ and $[v, x]$ is not an edge, then the red graph on the right is shorter than $\L$ and has at most $B(\L)$ bad edges.  
}
\label{fig:vbc-vbwu}
\end{figure}

We are left with the case that $v$ is adjacent (via a bad edge)  to exactly one $\L$-essential vertex, say $b$. 
As before, if $\mathrm{Lk}(v)$ fails to intersect one of the branches $[c, y]$ and $[c, z]$, then applying  
Lemma~\ref{lem:double}, we would find the desired $\L'$ in $D_v\G$.  
If not, then $v$ is adjacent to 
  non-$\L$-essential vertices $u$ and $w$ on   $[y,c]$ and $[z,c]$ respectively, i.e.~condition (1) in the statement of the lemma holds.  
Finally, if we have the configuration in (1), but one of $[x, v]$ and $[a, v]$ is not an edge, we get a shorter $\K$ subdivision as shown on the right in Figure~\ref{fig:vbc-vbwu}.
The above reasoning shows that either $D_v\G$ contains a $\K$ subdivision $\L' \subseteq D_v(\L)$ with $B(\L') < B(\L)$ or (1) and (2) both hold simultaneously. 
\end{proof}

Using the Lemma~\ref{lem:non-ess}, we can keep doubling over $\L$-non-essential vertices and finding 
$\K$ subdivisions with fewer bad edges until we either run out of $\L$-non-essential vertices which are endpoints of bad edges, or get to the point where {\it every} such $\L$-non-essential vertex has the configuration in Figure~\ref{fig:vbwu}.  In particular, by applying Lemma~\ref{lem:non-ess} to $w$ and $u$ (from Figure~\ref{fig:vbwu}) we obtain strong restrictions on the graph, which enable us to find an induced $\K$ subdivision.  Finally, we show that when all of the bad edges have $\L$-essential vertex endpoints, then either the graph is one of the graphs in Figure~\ref{fig:lambdas} or some double contains a $\K$ subdivision with fewer bad edges.  
This is all put together in the following proof.

\begin{proof}[Proof of Proposition~\ref{prop:lambdas}]
Choose a $\K$ subdivision $\Lambda \subseteq \G$ such that $\Lambda$ is shortest among all 
$\K$ subdivisions with at most $B(\L)$ bad edges.

\medskip
\noindent
\emph{Claim: }  If $\L$ is not induced, one of the following holds:
\begin{enumerate}
\item[(i)]  
$\L$ induces one of the graphs in Figure~\ref{fig:lambdas}. 
\item[(ii)] 
For some vertex $v$, the double $D_v\G$ 
contains a $\K$ subdivision $\L'$ such that $B(\L') < B(\L)$.  
\end{enumerate}

 Before proving the claim, we explain why it is sufficient to complete the proof.  
Given a $\K$ subdivision $\L$, if it is induced or if (i) holds, i.e.~if it induces one of the graphs in 
Figure~\ref{fig:lambdas}, then we are done.  Otherwise, (ii) holds.  We take $\L_2$ to be the shortest $\K$ subdivision in $\G_2=D_v\G$ with at most $B(\L')$ bad edges, where $D_v\G$ and $\L'$ are 
provided by (ii), and 
we repeat the argument with $\L_2$  and $\G_2$ instead of $\L$ and $\G$.  After finitely many steps we arrive at a pair $\L_n \subseteq \G_n$ which either satisfies (i) or such that  
 $B(\L_n)=0$, which means $\L_n$ is induced.  This proves the proposition. 
 
 \medskip
\emph{Proof of the claim:}  By Lemma~\ref{lem:adjacent}, 
$\L$ has no bad edges between any pair of vertices that lie on a single branch, or any pair of $\L$-non-essential vertices on adjacent branches.  

{\bf Case 1. Non-essential vertex on a bad edge.} Suppose there exists a $\L$-non-essential vertex $v$, say on the branch $[a, x]$,  which is the endpoint of a bad edge. Then by Lemma~\ref{lem:non-ess}, either (ii) in the claim above holds 
(in which case we are done)
or we may assume that $\L$ has the configuration specified by conditions (1) and (2) of Lemma~\ref{lem:non-ess}.  In the latter case, $[v, a]$ and $[v, x]$ are edges and we can assume, by re-labeling if necessary, that the edges incident to $v$ guaranteed by (1) are as shown 
in Figure~\ref{fig:vbwu}.  In particular, $v$ is adjacent to $b$, and to $\L$-non-essential vertices 
$w$ on $[c, z]$ 
and $u$ on $[c, y]$.  

Now apply Lemma~\ref{lem:non-ess} to $w$. 
If $D_w\G$ contains  a $\K$ subdivision $\L'$ such that $B(\L') < B(\L)$, we are done.  If not, 
we conclude (from Lemma~\ref{lem:non-ess}(1)) that $w$ is adjacent to exactly one of $a, b, x$ and $y$.   
(See the left side of Figure~\ref{fig:induced}.)
Since $\G$ is triangle-free, and $v$ is already adjacent to $a, x, b$ and $w$, we see that $w$ cannot be adjacent to any of $a, x, $ or $b$.  
Thus we conclude that $w$ is adjacent to $y$. By Lemma~\ref{lem:non-ess}(1), 
there is a $\L$-non-essential vertex $t$ on $[x, b]$
adjacent to $w$. Furthermore, $[w, c]$ and $[w, z]$ are edges by   Lemma~\ref{lem:non-ess}(2).
\begin{figure}[h!]
\begin{center}
\begin{overpic}[scale=0.7]{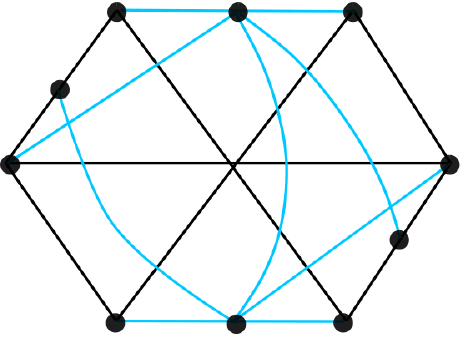}
\put(25, 74){\small $ x$}
\put(50, 74){\small $ v$}
\put(77, 74){\small $ a$}
\put(-8, 35){\small $ b$}
\put(103, 35){\small $ y$}
\put(88, 12){\small $ u$}
\put(75, -6){\small $ c$}
\put(50, -6){\small $ w$}
\put(25, -6){\small $ z$}
\put(4, 54){\small $t$}
\end{overpic}
\hspace{3cm}
\begin{overpic}[scale=0.7]{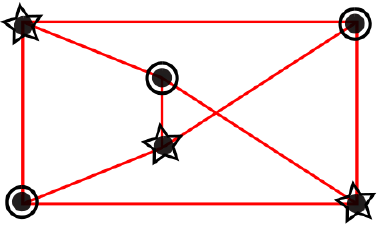}
\put(45, 45){\small $ x$}
\put(44, 9){\small $ c$}
\put(-3, 59){\small $ v$}
\put(100, 59){\small $ u$}
\put(-2, -8){\small $ w$}
\put(100, -8){\small $ t$}
\end{overpic}
\end{center}
\caption{The picture on the left shows the configuration obtained after Lemma~\ref{lem:non-ess} has been applied to $v$ and then to $w$.  Blue paths are edges. 
\\
After applying Lemma~\ref{lem:non-ess} to $u$, we have additional edges from $u$ to $z$ and $t$.  
Then the resultant graph contains  a $\K$ subdivision as shown in the picture on the right. 
}
\label{fig:induced}
\end{figure}

Applying similar reasoning to $u$, 
we conclude (from Lemma~\ref{lem:non-ess}(2)) that $[c, u]$, $[u, y]$ are edges and (from Lemma~\ref{lem:non-ess}(1)) that $u$ is adjacent to $z$ and to a $\L$-non-essential vertex $t'$ on the branch $[x, b]$. Finally, applying Lemma~\ref{lem:non-ess}(2) to the vertex $t$ from the previous paragraph, we conclude that $[t, x]$ and $[b, t]$ are edges.  It follows that $t=t'$, so that $u$ is adjacent to $t$.  

Then $\G$ contains 
a $\K$ subdivision as shown on the right in Figure~\ref{fig:induced}.  
All the branches of this $\K$ subdivision are edges, except possibly $[c, x]$.  
 If $e$ is a bad edge of this graph, then Lemma~\ref{lem:adjacent}(1) together with the triangle-free condition implies that $e$ must connect one of $t, u, v$ and $w$ to a $\L$-non-essential vertex on $[x,c]$.  However, since each of $t, u, v,$ and $w$ lies on a branch of $\L$ adjacent to $[c, x]$, Lemma~\ref{lem:adjacent}(2) implies that there are no bad edges of this kind.  Thus the $\K$ subdivision is induced. In particular, (ii) of the claim holds.  
This completes the proof of the claim in the case that there is at least one non-$\L$-essential vertex of $\L$ which is the endpoint of a bad edge.

{\bf Case 2.  Every bad edge has essential vertices}. It remains to consider the case that all endpoints of bad edges are $\L$-essential.  
Lemma~\ref{lem:adjacent} (1) implies that any bad edge has its both its endpoints in $\{x, y, z\}$ or both in $\{a, b, c\}$.  Moreover, since $\G$ is triangle-free, for each of these sets, there can be at most two edges connecting pairs of vertices in the set. 

First consider the case when one of the sides has exactly one edge (and the other side has zero, one or two edges).  Assume without loss of generality, that  there is a bad edge 
between $x$ and $y$, but none between either of these and $z$.  
Then $D_x\G$ 
contains a $\K$ subdivision $\L'$ such that $B(\L') < B(\L)$.  
If there are no bad edges with endpoints among $a', b', c'$, then the required $\K$ subdivision is 
as shown in Figure~\ref{fig:isolated}, and is induced.  If there are such bad edges, then in particular, these bad edges have endpoints between non-essential vertices of adjacent branches of the $\K$ subdivision in Figure~\ref{fig:isolated}, and they can be eliminated  using the procedure in the proof of Lemma~\ref{lem:adjacent} (2).  Then the resulting graph only has bad edges with endpoints 
among $a, b, c$, but we have eliminated the bad edge between $x$ and $y$.  So the resulting graph has fewer bad edges.  
\begin{figure}[h!]
\begin{center}
\begin{overpic}[scale=0.7]{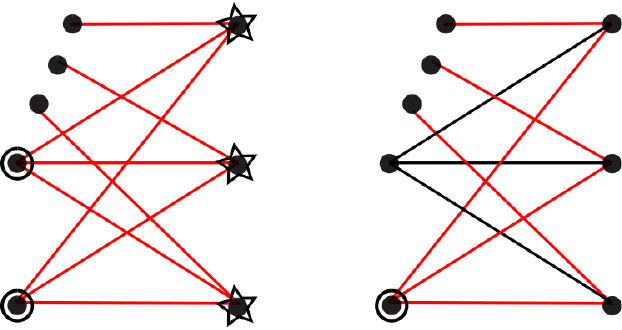}
\put(42, 48){\small $ a$}
\put(102, 48){\small $ a'$}
\put(-6, 23){\small $ y$}
\put(42, 23){\small $ b$}
\put(54, 23){\small $ y$}
\put(102, 23){\small $ b'$}
\put(-6, 0){\small $ z$}
\put(42, 0){\small $ c$}
\put(54, 0){\small $ z'$}
\put(102, 0){\small $ c'$}
\end{overpic}
\end{center}
\caption{
This illustrates the induced $\K$ subdivision in $D_x\G$, when $\L$ a bad edge between $x$ and $y$, and no other bad edges. }
\label{fig:isolated}
\end{figure}

Finally, we are left with the case that one of the sides has two edges (say there are edges between $x$ and $y$, and between $y$ and $z$) and the other side has zero or two bad edges. 
If the other side has zero bad edges, we have the graph on the left in Figure~\ref{fig:lambdas}.  
Otherwise assume without loss of generality that $[a,b]$ and $[b,c]$ are the two edges on the other side. 
 \begin{figure}[h!]
\begin{center}
 \begin{overpic}[scale=0.7]{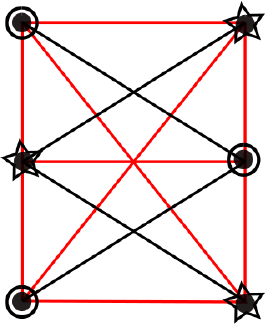}
\put(-10, 103){\small $ a$}
\put(85, 103){\small $ x$}
\put(-10, 50){\small $ b$}
\put(85, 50){\small $ y$}
\put(-10, 0){\small $ c$}
\put(85, 0){\small $ z$}
\end{overpic}
\end{center}
\caption{If one of 
$[x,b]$, $[z,b]$, $[a,y]$, $[c,y]$ is not an edge, then the red $\K$ subdivision is shorter than $\L$ and has at most $B(\L)$  bad edges.  
}
\label{fig:four-edges}
\end{figure}
Now suppose that one of $[x,b]$, $[z,b]$, $[a,y]$, and $[c,y]$ is not an edge. Then we obtain a shorter $\K$ subdivision with at most $B(\L)$ bad edges, as shown in Figure~\ref{fig:four-edges}, establishing (ii) of the claim.  Otherwise, all of  $[x,b]$, $[z,b]$, $[a,y]$, and $[c,y]$ are edges, and we obtain the graph on the right in Figure~\ref{fig:lambdas}. 
\end{proof}

\section{Boundaries of right-angled Coxeter groups defined by non-planar graphs.  } \label{sec:boundariesnonplanar} 
In this section we prove Theorem~\ref{thm:main1}. That is, we show that if $\G$ is a non-planar triangle-free graph, then either $\bndry X_\G$ is non-planar for any $\CAT(0)$ space $X$ on which $W_\G$ acts geometrically, 
or $W_\G$ contains a finite index subgroup whose defining graph contains an induced copy of the graph in Figure~\ref{fig-badgraph}.  We will show below that the graph theoretic results of the previous section together with the doubling lemma (Lemma~\ref{lem:doubling})
can be used to reduce this to proving the following two propositions.
\begin{proposition}
\label{prop: non-planar 1}
If $\Delta$ is
 a $K_{3,3}$ subdivision and $W_\Delta$ acts geometrically on a $\CAT(0)$ space $X_\Delta$,
then $\bndry X_{\Delta}$ is non-planar.
\end{proposition}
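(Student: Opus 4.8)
The plan is to produce, inside $\partial X_\Delta$, an embedded copy of the non-planar graph $K_{3,3}$ and then argue that this forces $\partial X_\Delta$ to be non-planar. The starting observation is that a $K_{3,3}$ subdivision $\Delta$ is, topologically, just $K_{3,3}$, with six $\Delta$-essential vertices partitioned into two triples, and with each vertex of one triple joined by a branch to each vertex of the other. Since $\Delta$ is triangle-free (subdivided branches carry no triangles) and has no isolated vertices, I expect to identify the relevant cycles in $\Delta$ and invoke Lemma~\ref{lem:inboundary}: whenever a subset $A$ of vertices induces a cycle in $\Delta$, the special subgroup $W_A$ is infinite dihedral-by-finite and its boundary is an embedded circle in $\partial X_\Delta$. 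The branches of the $K_{3,3}$ subdivision can be grouped into cycles (for instance the six 4-cycles $(a,x,b,y)$-type loops, or a suitable collection that covers every branch), so each such cycle gives a genuine embedded circle in the boundary.

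The heart of the argument is to assemble these circles into an embedded copy of $K_{3,3}$ in $\partial X_\Delta$. The key point is that Lemma~\ref{lem:inboundary} gives more than circles: for a branch $[s,t]$ of $\Delta$, consecutive reflections spell out a geodesic ray in $X_\Delta$, and its endpoint is a well-defined boundary point, so each branch of $\Delta$ corresponds to a distinguished pair of boundary points (the two ``ends'' $(st)^\infty$ and $(ts)^\infty$ of the corresponding two-ended special subgroup). I would use these distinguished points as the images of the six $\Delta$-essential vertices, and use arcs of the boundary circles coming from the cycles of $\Delta$ as the images of the nine edges of $K_{3,3}$. The crucial technical step is to verify that these arcs can be chosen to be pairwise disjoint except at the shared vertex-points, so that they genuinely form an embedded $K_{3,3}$ and not merely an immersed one. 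This is where the combinatorial structure of the Davis--Moussong complex (or of the quasi-convex hull supplied by Lemma~\ref{lem:qc!}) must be exploited: distinct reduced words in $W_\Delta$ converge to distinct boundary points, and non-crossing branches in $\Delta$ yield non-crossing geodesic families in $X_\Delta$, hence disjoint boundary arcs.

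Once an embedded $K_{3,3}$ is produced in $\partial X_\Delta$, non-planarity is immediate: $K_{3,3}$ does not embed in $S^2$ by Kuratowski's theorem, so any space containing an embedded $K_{3,3}$ fails to embed in $S^2$ and is therefore non-planar by definition. I do not expect to need local connectivity or Claytor's theorem here, since I am exhibiting the forbidden graph directly rather than deducing its existence from non-planarity.

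The step I expect to be the main obstacle is the disjointness verification in the second paragraph: ensuring that the nine boundary arcs meet only at the six designated vertex-points. Two branches of the $K_{3,3}$ subdivision that are disjoint in $\Delta$ generate special subgroups whose orbits are ``far apart,'' but because $X_\Delta$ is an arbitrary $\CAT(0)$ space on which $W_\Delta$ acts geometrically (not necessarily the Davis--Moussong complex), I cannot simply read disjointness off a combinatorial picture. The careful argument will likely pass through the convex hulls $C(W_A)$ from the proof of Lemma~\ref{lem:inboundary}, showing that the boundary circles attached to branches sharing no $\Delta$-essential vertex intersect $\partial X_\Delta$ in disjoint sets, and that circles sharing an essential vertex meet only at the single boundary point corresponding to that vertex. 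Controlling these intersections in full generality—rather than in the Davis--Moussong complex—is the delicate part of the proof.
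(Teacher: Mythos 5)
Your overall strategy is the same as the paper's: use Lemma~\ref{lem:inboundary} (really Lemma~\ref{lem:qc!}) to get embedded circles in $\partial X_\Delta$ from cycle special subgroups, assemble them into an embedded $K_{3,3}$, and conclude non-planarity directly from Kuratowski --- and that final step is indeed immediate, with no need for Claytor. However, the construction at the heart of your plan has a genuine gap, in two places. First, your proposed vertex set does not exist: an essential vertex $v$ of $\Delta$ is an involution, hence elliptic, and has no associated boundary point. Your fallback --- the ends $(st)^{\infty},(ts)^{\infty}$ of branch subgroups --- produces a \emph{pair} of points for each of the nine branches (and no points at all for an unsubdivided branch, since then $\langle s,t\rangle$ is finite), so it cannot serve as ``the images of the six $\Delta$-essential vertices.'' The paper's embedded $K_{3,3}$ is not a topological copy of the combinatorics of $\Delta$ at all: its six vertices are the endpoints $(ab)^{\infty},(ba)^{\infty},(ca)^{\infty}$ and $(xy)^{\infty},(yx)^{\infty},(zx)^{\infty}$ of loxodromics determined by \emph{non-adjacent pairs} of essential vertices of $\Delta$, with bipartition as in Figure~\ref{fig: bndry K33}, and the nine edges come from arcs of only four circles, not nine.

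Second, your disjointness claim --- that circles sharing an essential vertex meet ``only at the single boundary point corresponding to that vertex'' --- is false, and it fails exactly where the real work lies. Two cycles of $\Delta$ sharing a branch path generate special subgroups whose boundary circles intersect in the boundary of the shared path subgroup, which is a Cantor set once that path has length at least three (and two points in degenerate cases), never a single point. The paper's proof is organized entirely around this difficulty: Lemma~\ref{lem: theta} shows how to select, inside one circle, a closed arc $Z$ meeting the other circle only in the two loxodromic fixed points $(ab)^{\infty},(ba)^{\infty}$, using the description (via Lemma~\ref{lem:qc!}) of points of that arc by the first letter of a Cayley-graph geodesic ray; the proof of Proposition~\ref{prop: non-planar 1} then repeats this arc-selection with the cycles $\D'_2,\D'_3$ to attach the arcs $A$ and $B$. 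This selection mechanism is precisely the step your proposal defers to ``the delicate part,'' so the argument as written does not close. Two smaller points: a cycle special subgroup is virtually $\mathbb{Z}^2$ (for a $4$-cycle) or virtually a hyperbolic surface group (for longer cycles), not ``infinite dihedral-by-finite'' --- that would be two-ended, with two boundary points rather than a circle; and the trivial case where $\Delta$ is $K_{3,3}$ with no subdivision needs separate treatment (in the paper, $\partial X_\Delta$ is then the join of two Cantor sets, hence non-planar), since the arc-selection argument requires at least one subdivided branch.
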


\begin{proposition}
\label{prop: special graph}
Let $\Delta$
 be the graph on the right side in Figure~\ref{fig:lambdas}, subdivided enough so that it is triangle-free, and suppose that $W_\Delta$ acts geometrically on the $\CAT(0)$ space $X_\Delta$. 
 Then $\bndry X_{\Delta}$ is non-planar.
\end{proposition}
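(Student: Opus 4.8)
The plan is to read off the structure of $\Delta$ and then manufacture a non-planar finite graph inside $\partial X_\Delta$ by means of Lemma~\ref{lem:inboundary}. The key observation is that in the right-hand graph of Figure~\ref{fig:lambdas} the two middle vertices $b$ and $y$ are each joined by an \emph{edge} to each of $a,c,x,z$, while $\{a,c\}$ and $\{x,z\}$ are joined by the four subdivided branches $[a,x],[a,z],[c,x],[c,z]$ (forming an induced $4$-cycle $C=(a,x,c,z)$) and $b,y$ are joined by a subdivided branch $[b,y]$. Thus the induced subgraph on $\{a,b,c,x,y,z\}$ is exactly $K_{2,4}$ with parts $\{b,y\}$ and $\{a,c,x,z\}$, so $\langle a,c,x,z\rangle$ is a free product $F$ of four copies of $\mathbb{Z}/2$ (with Cantor-set boundary $K=\partial F$) and $\langle b,y\rangle\cong D_\infty$ centralizes $F$. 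Hence $W_{\{a,b,c,x,y,z\}}=F\times\langle b,y\rangle$.

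Next I would extract the relevant boundary pieces using Lemma~\ref{lem:inboundary}. First, $\partial X_\Delta$ contains the join $\partial F * \partial\langle b,y\rangle$, i.e.\ the suspension $\Sigma K$ of the Cantor set $K$, with cone points $Q=(by)^\infty$ and $\bar Q=(yb)^\infty$. Second, the outer $4$-cycle $C$ gives a circle $S_3=\partial W_C$ which meets $\Sigma K$ exactly in $K$, and on which the two point-pairs $\{(ac)^\infty,(ca)^\infty\}$ and $\{(xz)^\infty,(zx)^\infty\}$ are \emph{linked}: the axes of $\langle a,c\rangle$ and $\langle x,z\rangle$ cross in the Fuchsian action because $a,c$ separate $x,z$ around $C$. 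Third, the induced $4$-cycles $(a,b,c,y)$ and $(x,b,z,y)$ yield the flat circles $S_1=\partial(\langle a,c\rangle\times\langle b,y\rangle)$ and $S_2=\partial(\langle x,z\rangle\times\langle b,y\rangle)$, which lie in $\Sigma K$; and the subdivided branch $[b,y]$ (with interior vertex $m$ in the minimal case) gives a further flat circle $\partial(\langle a,m\rangle\times\langle b,y\rangle)$ running through $Q$ and $\bar Q$ transversally to the suspension.

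I would then assemble these into a finite non-planar graph on the six double points $Q,\bar Q$, $(ac)^\infty,(ca)^\infty$, $(xz)^\infty,(zx)^\infty$. The circles $S_1,S_2,S_3$ pass through these six points so that on each circle its two intersection pairs are linked; the resulting graph on the six points (twelve arcs, each vertex of degree four, with $\{Q,\bar Q\}$, $\{(ac)^\infty,(ca)^\infty\}$, $\{(xz)^\infty,(zx)^\infty\}$ the non-adjacent pairs) is the octahedron $K_{2,2,2}$. Since the octahedron is \emph{maximal planar}, it suffices to adjoin one further arc joining the antipodal pair $Q,\bar Q$ and otherwise disjoint from the octahedron; this arc is to be supplied by the branch $[b,y]$ (via the flat circle above, or via a suspension arc $\gamma_k$ through a fourth point $k\in K$). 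Kuratowski's theorem then forces $\partial X_\Delta$ to be non-planar.

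The hard part is precisely the disjointness bookkeeping in the last step, which is where the essential use of the branch must enter. An arbitrary suspension arc $\gamma_k$ from $Q$ to $\bar Q$ meets $S_3$ at its foot $k\in K$, and the branch circle shares a Cantor set with $S_1$ (both of the cycles $(a,b,c,y)$ and $(a,b,m,y)$ traverse the path $b$--$a$--$y$); so the ``extra arc'' is not literally disjoint from the octahedron and must be rerouted, or else one must argue that no re-embedding removes the obstruction. Concretely, I expect the crux to be a comparison of two cyclic orders on $K$: the order induced at the cone point $Q$ by the suspension, which is a \emph{planar tree order} for the action of $F$ on its Bass--Serre tree, against the \emph{Fuchsian order} forced along $S_3$. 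A single pair of axes can be made to link in a tree order, but I would aim to show that the full linking pattern of the axes of $\langle a,c\rangle,\langle x,z\rangle,\langle a,x\rangle,\langle a,z\rangle,\dots$ imposed by the Fuchsian circle $S_3$ cannot be simultaneously realized by any $F$-equivariant planar structure on the tree. Establishing this incompatibility — carried out on enough points of $K$ — is the real work; everything else is an application of Lemma~\ref{lem:inboundary} together with Kuratowski's theorem.
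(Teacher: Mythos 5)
Your first three paragraphs essentially reconstruct the paper's own proof. The paper takes the four cycles $\D_1=(x,c,z,a)$, $\D_2=(x,b,z,y)$, $\D_3=(c,b,a,y)$ and $\D_4=(b,z,y)$, the last of which runs over the subdivided branch $[b,y]$ and so has length at least $4$. The boundary circles of the first three are exactly your $S_3$, $S_2$, $S_1$, and they form precisely your octahedron on the six points $(by)^\infty,(yb)^\infty,(ac)^\infty,(ca)^\infty,(xz)^\infty,(zx)^\infty$; the circle of $\D_4$ supplies exactly the extra arc from $Q=(by)^\infty$ to $\bar Q=(yb)^\infty$ that you want to adjoin. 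Indeed $W_{\D_4}\cap W_{\D_1}=\langle z\rangle$ is finite, while $W_{\D_4}\cap W_{\D_2}=\langle z\rangle\times\langle b,y\rangle$ and $W_{\D_4}\cap W_{\D_3}=\langle b,y\rangle$ are two-ended with limit set $\{Q,\bar Q\}$, so each of the two arcs of this circle cut off by $Q,\bar Q$ meets the octahedron only at its endpoints. The union then contains an embedded $\K$ outright, e.g.\ with parts $\bigl\{Q,(xz)^\infty,(zx)^\infty\bigr\}$ and $\bigl\{\bar Q,(ac)^\infty,(ca)^\infty\bigr\}$ --- which is just your ``maximal planar plus one edge'' observation made explicit.

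The genuine defect in your write-up is the last paragraph, where you declare the disjointness of the extra arc unresolved and defer the proof to an uncompleted program comparing tree orders with the Fuchsian order. That deferral rests on a miscomputation: you assert that the branch circle $\partial(\langle a,m\rangle\times\langle b,y\rangle)$ shares a Cantor set with $S_1=\partial(\langle a,c\rangle\times\langle b,y\rangle)$ because both cycles traverse the path $b$--$a$--$y$. But the special subgroup generated by $\{a,b,y\}$ is $\langle a\rangle\times\langle b,y\rangle\cong\mathbb{Z}/2\times D_\infty$, which is two-ended; a Cantor-set intersection would require the shared path to have length at least three (this is exactly the dichotomy invoked in the proof of Lemma~\ref{lem: theta}), whereas $b$--$a$--$y$ has length two. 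So by the very principle you use everywhere else --- the intersection of special-subgroup boundaries is the boundary of the intersection special subgroup, which the paper justifies via Lemma~\ref{lem:qc!} and Lemma~\ref{lem:inboundary} --- your branch circle meets $S_1$ and $S_2$ exactly in $\{Q,\bar Q\}$ and misses $S_3$ entirely (those groups share only $\langle a\rangle$). Your construction therefore closes up with no rerouting and no order-theoretic argument: paragraphs 1--3 plus this two-line computation constitute a complete proof, coinciding with the paper's up to the immaterial substitution of your cycle $(a,b,m,y)$ for the paper's $(b,z,y)$. (Your alternative of a suspension arc $\gamma_k$ through $k\in K$ does fail, since it hits $S_3$ at $k$; the branch circle is the correct choice, and is where the triangle-free subdivision hypothesis on $[b,y]$ is used.)
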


Before we prove the above propositions, we indicate how to use them to deduce 
Theorem~\ref{thm:main1}.

\begin{proof}[Proof of Theorem~\ref{thm:main1}]
Let $\G$ be a triangle-free non-planar graph.  Then Kuratowski's Theorem says that $\G$ contains a 
(not necessarily induced) $\K$ or $K_5$ subdivision.  
If $\G$ contains a $K_5$ subdivision, then 
Lemma~\ref{lem:K5} 
says that $\G_1$ contains a (not-necessarily induced) $\K$ subdivision, where $\G_1$ is either $\G$ itself, or the double of $\G$ over some vertex.  
Now we apply Proposition~\ref{prop:lambdas}
to $\G_1$.  We conclude that there is a graph $\G_2$ which is 
obtained from $\G_1$ by  a finite sequence of doubling moves, and an induced subgraph $\Delta$
of $\G_2$, such that $\Delta$ is either a $\K$ subdivision or one of the two graphs in Figure~\ref{fig:lambdas}.   Lemma~\ref{lem:doubling} 
implies that 
$W_{\G_2}$
is a finite-index subgroup of $W_\G$.

Now if $\Delta$ is either a $\K$ subdivision or the graph on the right in Figure~\ref{fig:lambdas}, then 
Propositions~\ref{prop: non-planar 1} and~\ref{prop: special graph} say that 
every boundary of $W_\Delta$  is non-planar.   The group $W_\Delta$ is a special subgroup of $W_{\G_2}$, and therefore every boundary of $W_{\G_2}$ is non-planar.  Now suppose that $W_\G$ acts geometrically on a $\CAT(0)$ space $X_\G$.  Then since $W_{\G_2}$ is a finite index subgroup of $W_\G$, $W_{\G_2}$ also acts geometrically on $X_\G$. Therefore the boundary of $X_\G$ is non-planar.

On the other hand, if $\Delta$ is the graph on the left in Figure~\ref{fig:lambdas} (which is the same as the graph in Figure~\ref{fig-badgraph}), then we have produced 
a finite-index subgroup of $W_\G$, namely $W_{\G_2}$, whose defining graph $\G_2$  contains an induced copy of the graph in Figure~\ref{fig-badgraph}.
\end{proof}

We now prove Propositions~\ref{prop: non-planar 1} and~\ref{prop: special graph}.  
We begin with a lemma about the boundaries of $\Theta$-graph subdivisions.  (By 
a \emph{$\Theta$-graph} we mean a graph with two essential vertices and three distinct edges between the essential vertices.)   Recall that a branch in a graph is an embedded path between essential vertices of the graph.  It contains its endpoints, but does not contain any other essential vertices.

\begin{lemma}
\label{lem: theta}
Let $\L$ be a $\Theta$-graph subdivision such that each branch has length at least 2, and at least one of the branches has length at least three. (See Figure~\ref{fig: theta}.) If $X_\L$ is a $\CAT(0)$ space on which $W_\L$ acts geometrically, then  $\bndry X_{\L}$ contains an embedded $\Theta$-graph.
\end{lemma}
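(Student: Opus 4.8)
We have a Θ-graph subdivision $\Lambda$: two essential vertices joined by three branches (embedded paths), each of length at least 2, with at least one branch having length at least 3. We want to find an embedded Θ-graph in $\partial X_\Lambda$.

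**Key tool.** Lemma~\ref{lem:inboundary} says that for any subset $A$ of vertices, a boundary of the special subgroup $W_A$ embeds in $\partial X_\Lambda$, and crucially, if $A$ induces a cycle, we get a circle in the boundary. So the strategy should be to find several cycles in $\Lambda$ whose boundary circles in $\partial X_\Lambda$ fit together to form a Θ-graph (three arcs sharing two endpoints).

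**Finding the cycles.** In a Θ-graph subdivision with branches $\alpha, \beta, \gamma$, any two branches form a cycle. So we have three cycles: $\alpha \cup \beta$, $\beta \cup \gamma$, $\alpha \cup \gamma$. Each gives a circle in the boundary. The length conditions (each branch ≥ 2, one branch ≥ 3) ensure these cycles are embedded and the associated special subgroups are infinite (not finite dihedral), hence the circles are genuine circles rather than pairs of points.

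So my plan would be:

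First I would set up the three branches $\alpha$, $\beta$, $\gamma$ between the two essential vertices, and consider the three cycles formed by pairs of branches. By Lemma~\ref{lem:inboundary}, each cycle's vertex set induces a circle in $\partial X_\Lambda$. The length hypotheses guarantee each such cycle has at least 5 vertices (since each branch contributes at least its internal vertices plus endpoints, and one branch has an extra subdivision vertex), so the corresponding special subgroup is a hyperbolic surface-type group (or the right-angled Coxeter group on a cycle, which gives a genuine circle, not two points).

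Second, I would identify how these three circles intersect. The key observation is that two cycles sharing a common branch—say $\alpha \cup \beta$ and $\alpha \cup \gamma$ share the branch $\alpha$—should have boundary circles that share a common arc, namely $\partial C(W_\alpha)$-related points, or more precisely the boundary points coming from geodesics that run along the shared branch. Here I'd want to argue that the special subgroup $W_\alpha$ (where $\alpha$ is viewed as a path) has a boundary (a Cantor set or two points) sitting inside each of the two circles, and that the two circles agree precisely along the portion corresponding to $W_\alpha$. The three branches meet at the two essential vertices, and correspondingly the three circles should pairwise intersect in arcs that overlap to produce exactly the Θ-graph structure: three arcs glued along two common endpoints.

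Third, I would verify the embedding. The Θ-graph is built from three arcs $A_\alpha, A_\beta, A_\gamma$ (one from each branch's contribution to the boundary) meeting at two vertices (the boundary points associated to the two essential vertices, or to geodesics alternating at them). I'd take each circle $\partial(\alpha\cup\beta)$ and decompose it into the arc coming from $\alpha$ and the arc coming from $\beta$; doing this consistently across all three circles should produce three arcs meeting at two points, which is precisely an embedded Θ-graph. I would use the cone topology and quasiconvexity (Lemma~\ref{lem:qc!}) to confirm that distinct arcs only meet at the designated branch points, so there are no spurious identifications.

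**Main obstacle.** The hard part will be making rigorous the claim that the three boundary circles overlap in exactly the right pattern—sharing precisely the arcs corresponding to common branches and no more. Because the three circles all live inside the single boundary $\partial X_\Lambda$ and the branches genuinely share their endpoints (the two essential vertices) in $\Lambda$, I'd need to show that the boundary points reached by geodesics traveling along a shared branch $\alpha$ really are common to both circles containing $\alpha$, while points specific to $\beta$ versus $\gamma$ remain distinct. The length condition (one branch $\ge 3$) is presumably what prevents degeneracy—it likely guarantees that the shared subgroup $W_\alpha$ contributes a genuine nontrivial arc (more than a single point) of overlap. I would lean on the $W$-equivariant quasi-isometric embedding of the Cayley graph (Lemma~\ref{lem:qc!}) and the explicit description of boundary points such as $(xy)^\infty$ to pin down which rays produce which boundary points, thereby controlling the intersection pattern and confirming the resulting configuration is a genuinely embedded Θ-graph rather than something with extra identifications.
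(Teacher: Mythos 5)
Your strategy founders on the intersection pattern of the boundary circles, and the problem is visible inside your own write-up: you correctly note that the boundary of a path subgroup $W_\alpha$ is two points or a Cantor set, but then assert that the three circles ``pairwise intersect in arcs.'' These two statements are incompatible, and it is the first one that is true. If the cycles $\alpha\cup\beta$ and $\alpha\cup\gamma$ share the branch $\alpha$, then $\partial X_{\alpha\cup\beta}\cap\partial X_{\alpha\cup\gamma}$ is the limit set of $W_\alpha$; when $\alpha$ has length at least $3$ this is a Cantor set, and when $\alpha$ has length $2$ it is two points --- never an arc. For the same reason a circle $\partial X_{\alpha\cup\beta}$ does \emph{not} decompose into ``the arc coming from $\alpha$'' and ``the arc coming from $\beta$'': the limit sets of $W_\alpha$ and $W_\beta$ are two Cantor sets (or point pairs) whose union misses most of the circle, since a typical point of $\partial X_{\alpha\cup\beta}$ is the limit of a geodesic ray whose letters visit both branches infinitely often and hence lies in neither limit set. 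Consequently the union of two, let alone three, of these circles is not a finite graph at all, and there is no consistent choice of ``branch arcs'' to glue; this is a genuine gap in the approach, not a technicality to be repaired by quasiconvexity. (A smaller slip: the cycle formed by the two short branches may have length exactly $4$, so not all three of your cycles need have length at least $5$; its special subgroup is then virtually $\mathbb{Z}^2$ rather than a hyperbolic reflection group, though its boundary is still a circle.)

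The paper's proof shows what the correct replacement is, and it needs only \emph{two} cycles, both chosen to contain the subdivided branch. Both corresponding special subgroups are then hyperbolic reflection groups with circle boundaries $\partial X_2$ and $\partial X_3$, and $\partial X_2\cap\partial X_3$ is a Cantor set containing $(ab)^{\infty}$ and $(ba)^{\infty}$ (see Figure~\ref{fig: bndry intersection}). The embedded $\Theta$-graph is the whole circle $\partial X_2$ together with a single arc $Z\subset\partial X_3$, where $Z$ is the set of points represented by Cayley-graph geodesic rays whose \emph{first letter} lies on the third branch. It is this first-letter condition --- not membership in the limit set of a branch subgroup --- that makes $Z$ an arc and forces $Z\cap\partial X_2=\bigl\{(ab)^{\infty},(ba)^{\infty}\bigr\}$, so that $\partial X_2\cup Z$ is an embedded $\Theta$-graph. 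If you want to salvage your three-circle picture, you would have to extract one such first-letter arc from each circle and check the pairwise intersections again, which is strictly more work than the paper's two-cycle argument.
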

We remark that the conclusion is true even without the condition that at least one branch has length at least three, but the above lemma is sufficient for our purposes.

\begin{figure}[h!]
\begin{overpic}[scale=1]{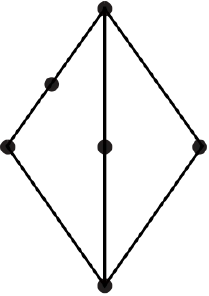}
\put(32, 102){\small $ a$}
\put(32, -9){\small $ b$}
\put(72, 47){\small $ z$}
\put(25, 47){\small $ y$}
\put(-9, 47){\small $ x$}
\put(11, 83 ){\scriptsize $\alpha_1$}
\put(37 ,68){\scriptsize $\alpha_2$}
\put(48, 83){\scriptsize $\alpha_3$}
\put(11, 17 ){\scriptsize $\beta_1$}
\put(37, 29){\scriptsize $\beta_2$}
\put( 50, 17){\scriptsize $\beta_3$}
\end{overpic}
\captionof{figure}{
A $\Theta$-graph subdivision with each branch of length at least two, and at least one branch of length at least 3. }
\label{fig: theta}
\end{figure}

\begin{proof}
Assume that $\L$ has essential vertices $a$ and $b$, and non-essential vertices $x, y,$ and $z$ as shown in Figure~\ref{fig: theta}.  Let 
$\alpha_1, \alpha_2$, and $ \alpha_3$ be the paths from $a$ to $x$, $y$, and $z$, respectively. Define $\beta_1, \beta_2, \beta_3$ analogously with the $\beta_i$ incident to $b$. 
The hypotheses imply that at least one of these paths is subdivided, so we may assume $\alpha_1$ is subdivided.  

Let $\D_2$ and $\D_3$  be the cycles given by $(\beta_1, \alpha_1,\alpha_2,\beta_2)$
and 
$(\beta_1,\alpha_1,\alpha_3,\beta_3)$ respectively, 
and let $G_2$ and $G_3$ be the corresponding special subgroups. Consider the quasi-isometry (coming from the orbit map) between $X_\L$ and the Davis complex $\Sigma_\L$. Let $X_2$ denote the image of $\Sigma_{G_2}$ under this quasi-isometry based at the image of the identity vertex. Note that $X_2$ is quasi-convex.  Define $X_3$ analogously.  We we will use the cycles $\D_2$ and $\D_3$ to find a theta graph in $\bndry X_{\L}$.  

Now $\D_2$ and $\D_3$ are cycles of length at least $5$, so $G_2$ and $G_3$ are 
hyperbolic reflection groups acting  geometrically on $\bbh^2.$
Thus $\bndry X_2\cong \bndry X_3\cong S^1.$  
Since  $\D_2$ and $\D_3$  intersect in a path of length at least three, 
 the corresponding special subgroup is virtually free and its boundary, which is equal to $\bndry X_2\cap \bndry X_3$, is homeomorphic to either two points or a Cantor set, see Section~\ref{sec:RACG}.  
Furthermore, $\big\{(ab)^{\infty},(ba)^{\infty}\big\}\subset\bndry X_2\cap\bndry X_3$, and this set separates each of $\bndry X_2$ and $\bndry X_3$ into two components, as shown in Figure~\ref{fig: bndry intersection}.

 \begin{figure}[h!]
\begin{overpic}[scale=.75]{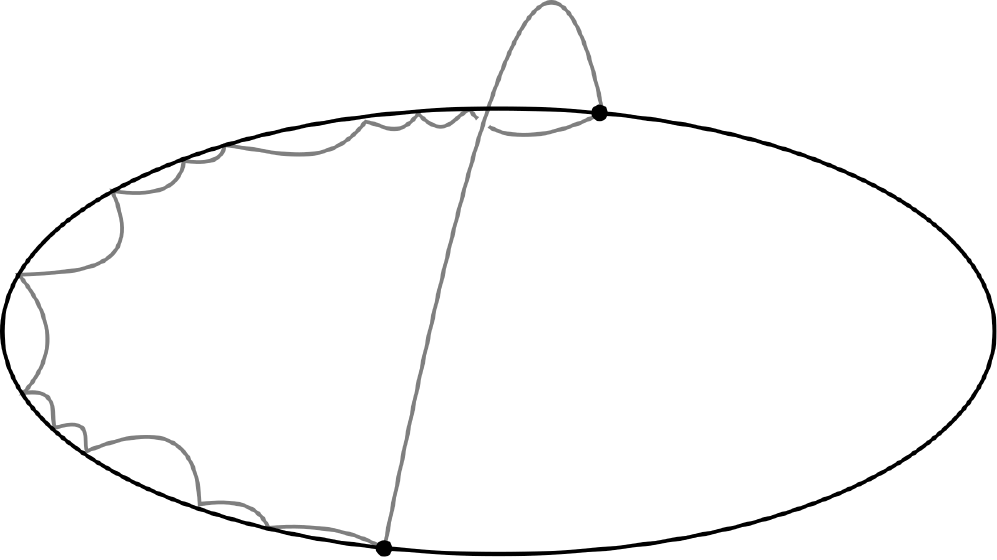}
\put(33, -4){\footnotesize $ (ba)^\infty$}
\put(61, 47){\footnotesize $ (ab)^\infty$}
\put(31, 5){\footnotesize $ \bndry X_2$}
\put(81, 2){\footnotesize $ \bndry X_3$}
\put(100.5, 20){\footnotesize $ Z$}
\end{overpic}
\medskip
\captionof{figure}{The figure shows $\bndry X_2$ in grey and $\bndry X_3$ in black. 
The intersection $\bndry X_2\cap\bndry X_3$ is a Cantor set.  The set $\big\{(ab)^{\infty},(ba)^{\infty}\big\}$ separates the grey and black circles into two components each.  The embedded $\Theta$-graph constructed in the proof consists of the grey circle together with the arc labeled $Z$. }
\label{fig: bndry intersection}
\end{figure}

We now explicitly find a $\Theta$-graph in $\bndry X_\L$ with essential vertices $(ab)^{\infty}$ and $(ba)^{\infty}$.
 Let $Z$ be the subset of $\bndry X_3$ represented by Cayley graph geodesic rays whose first letter is the label of a vertex on the branch $\alpha_3 \cup \beta_3$.   
 Note that $Z$ is the arc of $\bndry X_3$ which intersects $\bndry X_2\cap\bndry X_3$ (and hence $\bndry X_2$) only at its endpoints $(ab)^{\infty}$ and $(ba)^{\infty}$.  
Then $\bndry X_3 \cup Z $ is an embedded $\Theta$-graph in $\bndry X_\L$. 
\end{proof}

We now prove Proposition~\ref{prop: non-planar 1}.  In  the proof we will use the 
embedded $\Theta$-graph in $\bndry X_\G$ that we constructed in the previous lemma.

\begin{proof}[Proof of Proposition~\ref{prop: non-planar 1}]
Let $\Delta$ be a $\K$ subdivision with essential vertex sets 
$\{a,b,c\}$ and $\{x,y,z\}$.
Let $\alpha_1, \alpha_2, \alpha_3$, be the branches from $a$ to $x$, $y$, and $z$, respectively. Define $\beta_1, \beta_2, \beta_3$ and $\gamma_1, \gamma_2, \gamma_3$ analogously with the $\beta_i$ incident to $b$, and the $\gamma_i$ incident to $c$. 
(See Figure~\ref{fig: K33}.)

If $\Delta$ is graph-isomorphic to $K_{3,3}$ (i.e.~if it is the trivial $\K$ subdivision) 
 then $\bndry X_{\Delta}$ is the join of two Cantor sets, and is therefore non-planar. 
Thus we may assume that at least one branch, say $\alpha_1$, of $\Delta$ is subdivided. 
\begin{figure}[h!]
\begin{overpic}[scale=1.15]{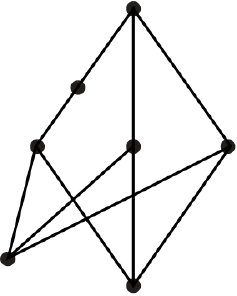}
\put(42, 102){\small $ a$}
\put(42, -9){\small $ b$}
\put(0, 5){\small $ c$}
\put(81, 49){\small $ z$}
\put(33, 49){\small $ y$}
\put(1, 49){\small $ x$}
\put(21, 83 ){\scriptsize $\alpha_1$}
\put(47 ,68){\scriptsize $\alpha_2$}
\put(58, 83){\scriptsize $\alpha_3$}
\put(29, 8 ){\scriptsize $\beta_1$}
\put(46.5, 22){\scriptsize $\beta_2$}
\put( 53, 8){\scriptsize $\beta_3$}
\put(-4,  30 ){\scriptsize $\gamma_1$}
\put(10, 30){\scriptsize $\gamma_2$}
\put( 21, 18){\scriptsize $\gamma_3$}
\end{overpic}
\captionof{figure}{A $K_{3,3}$ subdivision in which at least one branch is subdivided.}
\label{fig: K33}
\end{figure}

Observe that the union of the $\alpha_i$ and the $\beta_i$ is precisely the $\Theta$-graph subdivision 
$\L$ from Lemma~\ref{lem: theta}.  (See
Figure~\ref{fig: theta}.)
Retaining the notation of Lemma~\ref{lem: theta}, we see that $\bndry  X_2 \cup Z $ is 
an embedded $\Theta$-graph in $\bndry X_\Delta.$
To complete the proof we will find another half of a $\Theta$-graph which intersects 
$\bndry  X_2\cup Z $  in exactly three points. 
(See Figure~\ref{fig: bndry K33}.)

\begin{figure}[h!]
\begin{overpic}[scale=.65]{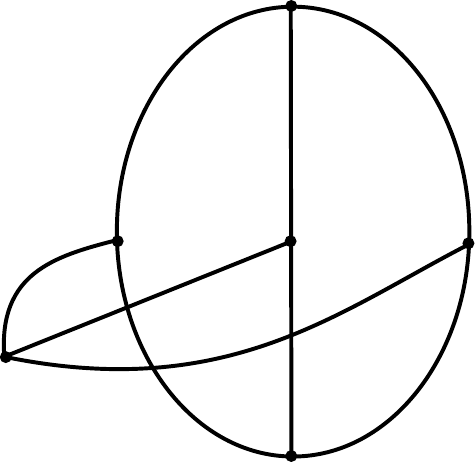}
\put(55, -8){\scriptsize $ (ba)^\infty$}
\put(55, 101){\scriptsize $ (ab)^\infty$}
\put(27, 46){\scriptsize $ (xy)^\infty$}
\put(63, 46){\scriptsize $ (yx)^\infty$}
\put(100.5, 46 ){\scriptsize $ (zx)^\infty$}
\put(-9,12 ){\scriptsize $ (ca)^\infty$}
\put(96, 70){\scriptsize $ Z$}
\put(44, 70){\scriptsize $\bndry X_2 $}
\put(11, 70){\scriptsize $\bndry X_2 $}
\put(3, 29){\scriptsize $ A$}
\put(77, 29){\scriptsize $ B$}
\end{overpic}
\captionof{figure}{An embedded $K_{3,3}$ in $\bndry X_{\Delta}$}
\label{fig: bndry K33}
\end{figure}

Let $\D'_2$ and $\D'_3$ be the cycles given by $(\gamma_1,
\alpha_1,\alpha_2,\gamma_2)$ and $(\gamma_1,\alpha_1,\alpha_3,\gamma_3)$, respectively. Define $ X'_2$ and $ X'_3$ to be the quasiconvex subspaces of $X_\Delta$ corresponding to the special subgroups generated by $\D'_2$ and $\D'_3$. Following the argument of Lemma~\ref{lem: theta} above with the cycles $\D_2$ and $\D'_2$, we see that $\bndry X_2$ and $\bndry X'_2$ intersect in a Cantor set, and that there is a closed arc $A$ of $\bndry X'_2$ such that $A\cap\bndry X_2=\big\{(xy)^{\infty},(yx)^{\infty}\big\}$. The interior of $A$  contains the point $(ca)^{\infty}$ and is disjoint from $\bndry X_2.$ 

Similarly considering the cycles  $\D_3$ and $\D'_3, $ we  conclude that 
$\bndry X_3 \cap \bndry X'_3$ is a Cantor set, and that there 
exists closed arc $C$ in $\bndry X'_3$ which intersects $\bndry X_3$ in $\big\{(xz^{\infty}),(zx)^{\infty}\big\}$ and whose interior contains $(ca)^{\infty}$.  Define $B$ to be the subarc of $C$ which connects $(ca)^{\infty}$ to $(zx)^{\infty}$. Notice that since $z$ is the label of the initial edge in the Cayley graph geodesic for the ray $(zx)^{\infty}$ we have that $(zx)^{\infty}\in Z$.
Moreover, points of $B$ are represented by Cayley graph geodesic rays  whose first letter is a label of a vertex on $\gamma_3$, see Lemma~\ref{lem:qc!},  and therefore $B$ intersects $\bndry  X_2 \cup Z \cup A$ exactly in the two points $(ca)^{\infty}$ and $(zx)^{\infty}$.  Thus $\bndry  X_2 \cup Z \cup A \cup C$ is an embedded 
$\K$ in  $\bndry X_{\Delta}$, 
as shown in Figure~\ref{fig: bndry K33}. 
\end{proof}

Next, we prove Proposition~\ref{prop: special graph} by showing that if $\Delta$ is the graph on the right in 
Figure~\ref{fig:lambdas}, 
 then $\bndry  X_\Delta$ is non-planar.   Figure~\ref{fig: badgraph1} below reproduces this graph, and also shows an alternate view of it. 
 Note that in our application (Theorem~\ref{thm:main1}), this graph is an induced subgraph of a triangle-free graph, which forces all of the black edges in the figure to be subdivided. 

\begin{figure}[h!]
\begin{overpic}[scale=0.75]
{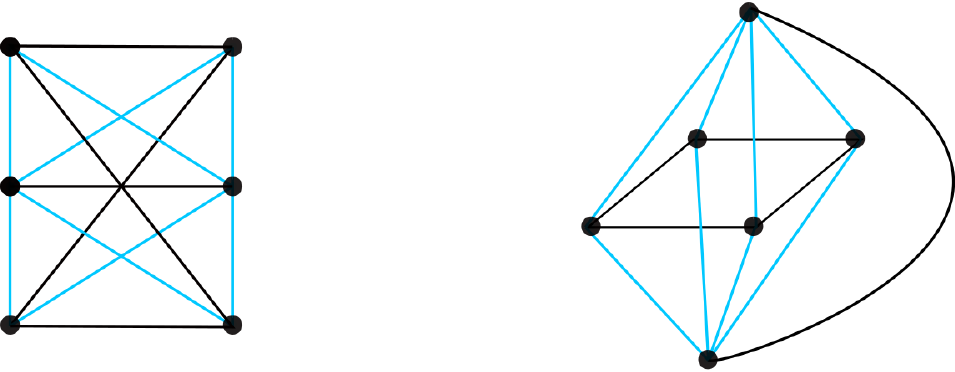}
\put(-4, 33){\small $a$}
\put(-4, 17){\small $ b$}
\put(-4, 3){\small $ c$}
\put(26, 3){\small $ z$}
\put(26, 17){\small $ y$}
\put(26, 33){\small $ x$}
\put(79.5, 13){\small $a$}
\put(78, 39){\small $ b$}
\put(74, 25){\small $ c$}
\put(91, 23){\small $ z$}
\put(75, -2){\small $ y$}
\put(57, 13){\small $ x$}
\end{overpic}
\captionof{figure}{
The figure shows two different views of the graph being considered in Proposition~\ref{prop: special graph}.
 The blue branches are edges. 
The black branches are necessarily subdivided.
}
\label{fig: badgraph1}
\end{figure}

\begin{proof}[Proof of Proposition~\ref{prop: special graph}]
Let $x,y,z,  a,b,$ and $c$ be the essential vertices  of $\Delta$, as shown in Figure~\ref{fig: badgraph1}.  
We now define four cycles of $\Delta$ as follows:
 $\D_1= (x,c,z,a)$,
 $\D_2= (x,b,z,y)$, 
 $\D_3= (c,b,a, y)$, and  
$\D_4=(b, z, y)$. Note that the branch $[b, y]$ of $\D_4$ is subdivided by hypothesis.

\begin{figure}[h!]
\medskip
\begin{overpic}[scale=.45]{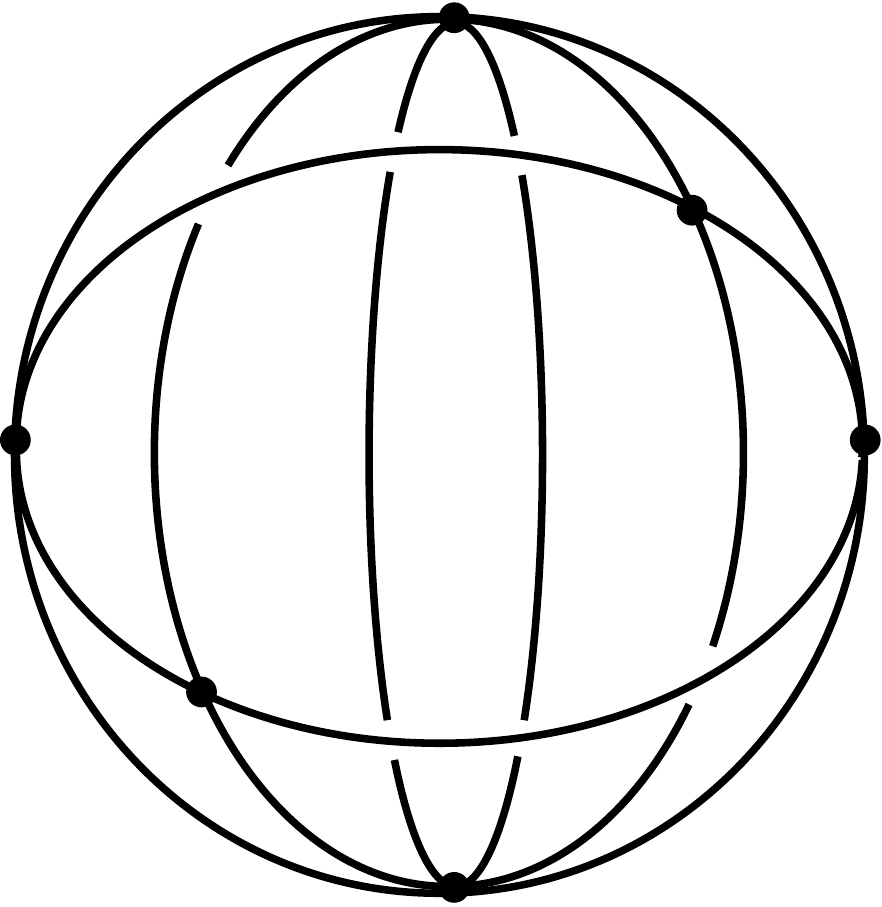}
\put(45, 103){\tiny $(by)^{\infty}$}
\put(60, 70){\tiny $(ca)^{\infty}$}
\put(98, 50){\tiny $ (zx)^{\infty}$}
\put(-18, 50){\tiny $ (xz)^{\infty}$}
\put(24, 26){\tiny $(ac)^{\infty}$}
\put(45, -5){\tiny $(yb)^{\infty}$}
\put(62, 24.5){\tiny $\bndry X_1$}
\put(7, 10){\tiny $\bndry X_2$}
\put(18, 50){\tiny $\bndry X_3$}
\put(42, 50){\tiny $\bndry X_4$}
\end{overpic}
\hspace{2cm}
\begin{overpic}[scale=.45]{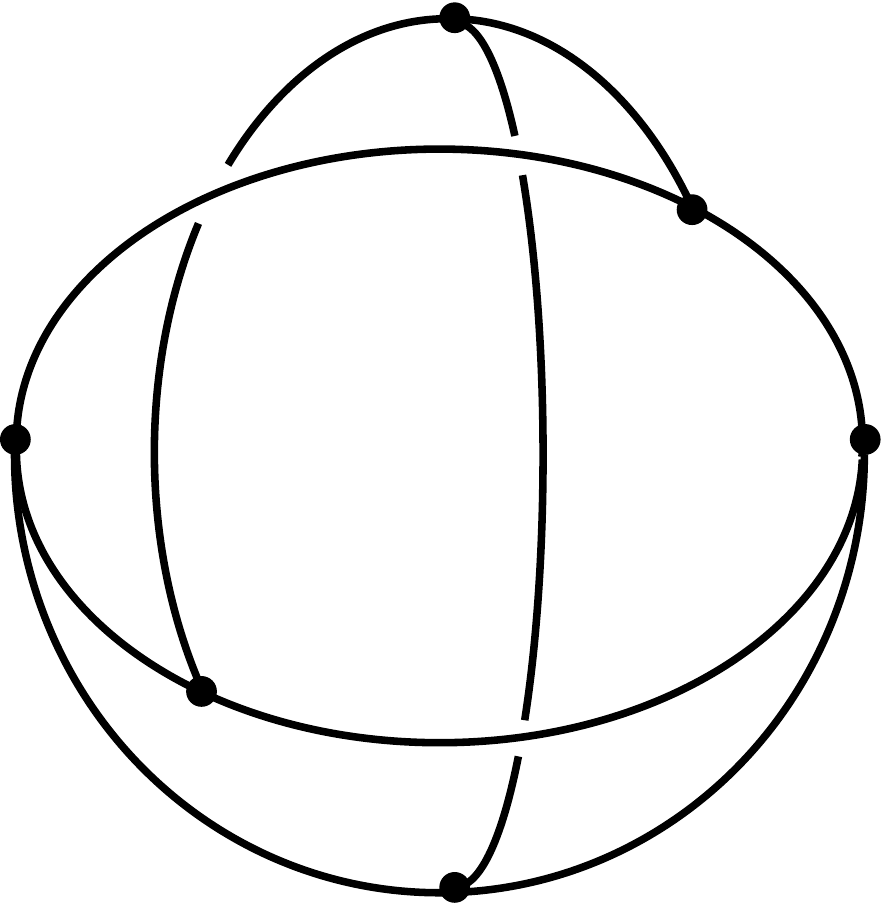}
\put(45, 103){\tiny $(by)^{\infty}$}
\put(61, 70){\tiny $(ca)^{\infty}$}
\put(98, 50){\tiny $ (zx)^{\infty}$}
\put(-18, 50){\tiny $ (xz)^{\infty}$}
\put(24, 26){\tiny $(ac)^{\infty}$}
\put(45, -5){\tiny $(yb)^{\infty}$}
\end{overpic}
\medskip
\captionof{figure}{
The picture on the left shows the pairwise intersections of the circles 
$\bndry X_1,\bndry X_2,$ $\bndry X_3,$ and $\bndry X_4$. The picture on the right shows an embedded $\K$ in 
$\bndry X_1 \cup\bndry X_2\cup\bndry X_3\cup \bndry X_4 \subset \bndry  X_\G$.}
\label{fig: bndrycircles}
\end{figure}

For $1 \le i \le 4$, let $ X_i$ be the quasi-isometrically embedded copy  of the Davis complex of the special subgroup generated by $\D_i$ which is based at the image identity vertex in $ X_\Delta$.
For each $i$, the cycle $\D_i$ has length at least $4$, so the corresponding special subgroup 
is virtually a surface group, and $\bndry X_i \cong S^1$.  The intersections of these circles are shown on the left in Figure~\ref{fig: bndrycircles}. 
 It follows that $\bndry  X_\G$ contains an embedded $\K$ as shown on the right in Figure~\ref{fig: bndrycircles}. 
\end{proof}

\section{The case of the bad graph} \label{sec:bad}

In this section we deal with the case that our defining graph contains an induced copy of the graph $\Pi$ (which is subdivided along some of the black edges).  As discussed in the introduction,  $W_\Pi$ has a planar boundary.   However, we will show in this section that the action of the group $W_\Pi$ on any $\CAT(0)$ boundary for $W_\Pi$ does not extend to the plane.  Therefore, when $\Pi$ is embedded in a graph $\G$ and $W_\G$ acts geometrically on a $\CAT(0)$ space $X$ such that $\partial X$ is connected, locally connected, and without local cut points, then $\partial X$ cannot be planar.

\begin{theorem} \label{thm:badisbad}
Let $\Pi$ be the graph in 
Figure~\ref{fig-badgraph}, with the non-blue edges subdivided so that $\Pi$ does not contain triangles.  Let $W_{\Pi}$ be the corresponding right-angled Coxeter group, and $\partial X_{\Pi}$ be the boundary of any proper $\CAT(0)$ space that $W_\Pi$ acts upon geometrically. Then:
\begin{enumerate}[(a)]
\item  $\bndry 
X_{\Pi}$ contains an embedded copy of the graph in Figure~\ref{fig:theta^theta}. 
\item The copies of the circles $A$, $B$, and $C$ shown in Figure~\ref{fig:theta^theta} are invariant under the induced action of $y$ on $\bndry X_{\Pi}$.
\item For any embedding of $\bndry X_{\Pi}$ in $S^2$, the induced action of $W_{\Pi}$ on $\bndry X_{\Pi}$ by homeomorphisms does not extend to $S^2$. 
\end{enumerate} 
 \end{theorem}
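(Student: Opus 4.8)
The plan is to establish the three parts in order, building the topological obstruction in part (c) on the rigid combinatorial structure produced in parts (a) and (b). For part (a), I would argue exactly as in the proof of Proposition~\ref{prop: special graph}: the graph $\Pi$ (the left graph of Figure~\ref{fig:lambdas}, equivalently Figure~\ref{fig-badgraph}) contains several cycles of length at least four, and each such cycle generates a special subgroup that is virtually a hyperbolic surface group, so by Lemma~\ref{lem:inboundary} its boundary is an embedded circle in $\partial X_\Pi$. Selecting the appropriate collection of cycles through the essential vertices and tracking their pairwise intersections (via the virtually free subgroups supported on shared branches, whose boundaries are the relevant pairs of points $(st)^\infty, (ts)^\infty$) yields the embedded copy of the graph in Figure~\ref{fig:theta^theta}, built from circles $A$, $B$, $C$ glued along specified pairs of points.

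For part (b), the key is that the vertex $y$ is central in a natural way: since $\Pi$ is obtained by subdividing the graph in Figure~\ref{fig-badgraph}, the generator $y$ commutes with the generators appearing in each of the cycles $\mathcal D_i$ defining the circles $A$, $B$, $C$, or at least the left-multiplication by $y$ carries each such special subgroup's orbit to itself setwise. Thus the induced homeomorphism of $\partial X_\Pi$ given by the action of $y$ preserves each quasiconvex subspace $X_i$ and hence fixes each circle $\partial X_i$ setwise. I would verify this commutation directly from the defining graph: one checks that $y$ is adjacent in $\Pi$ to every essential vertex appearing on the cycles generating $A$, $B$, and $C$, so that $y$ normalizes each of those special subgroups, giving the claimed invariance.

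For part (c), which I expect to be \textbf{the main obstacle}, the strategy is to derive a contradiction from the combinatorics of how three circles sharing common points can sit in $S^2$ together with an order-two homeomorphism. Suppose $\partial X_\Pi$ embeds in $S^2$ and the $W_\Pi$-action extends to a homeomorphism of $S^2$. The involution induced by $y$ is then a homeomorphism of $S^2$ of order two; by the solution to the Smith conjecture / standard results on involutions of $S^2$, its fixed set is either empty, two points, or a circle, and up to conjugacy it is either a reflection or the antipodal map. The three circles $A$, $B$, $C$ are each invariant under this involution, and they meet in the prescribed pattern of points which are themselves permuted or fixed in a way dictated by the algebra. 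I would show that no involution of $S^2$ can simultaneously preserve this configuration with the correct action on the intersection points: the invariance of each circle forces each circle's fixed points under $y$ to lie in the global fixed set of the involution, but the embedded graph from part (a) is genuinely three-dimensional in its linking behavior, so the required simultaneous invariance is incompatible with the constraint that an involution of $S^2$ has fixed set of dimension at most one. The delicate point will be to extract, from the specific intersection pattern of Figure~\ref{fig:theta^theta}, enough forced fixed points (or enough forced swapping of complementary regions) to contradict the classification of planar involutions; carrying this out cleanly is where the real work lies.
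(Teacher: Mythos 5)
Your proposal has genuine gaps in all three parts, and the decisive one, part (c), rests on a false premise. For (a) and (b): you never specify the cycles, and you describe the target figure incorrectly. In Figure~\ref{fig:theta^theta} the circles $A$, $B$, $C$ are pairwise \emph{disjoint} --- the paper takes them to be the limit sets of the special subgroups on the cycles $(x,a,y)$, $(x,b,y)$, $(x,c,y)$, which pairwise intersect only in the finite group $\langle x,y\rangle$ --- and they are joined to the two points $(xz)^\infty$, $(zx)^\infty$ by six auxiliary arcs; producing those arcs (from boundaries of auxiliary cycles such as $(x,a,z,b)$ or $(x,a,z,y)$, with a case division according to whether $xa$ is loxodromic, and using the action of $x$ to get the arcs to $(zx)^\infty$) is the real content of (a), which your ``circles glued along pairs of points'' picture misses. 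In (b), your justification is wrong: it is false in general that $y$ is adjacent in $\Pi$ to every essential vertex of these cycles (triangle-freeness forces branches such as $[a,y]$ or $[x,a]$ to be subdivided), and adjacency to essential vertices would not control the subdivision vertices anyway. The correct and much simpler reason is that $y$ is itself a vertex of each cycle, so $y\in W_A\cap W_B\cap W_C$ and each circle is preserved. Note moreover that if $y$ centralized these subgroups, as your ``commutation'' framing suggests, it would act \emph{trivially} on each circle; in fact it acts on each as a reflection with exactly two fixed points (the endpoints of the loxodromic given by the two neighbors of $y$ on that cycle), and this nontrivial ``flip'' is exactly what part (c) requires.

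For (c), the contradiction you sketch cannot be completed. The fixed-point count gives nothing against the classification of involutions of $S^2$: $y$ fixes two points on each circle plus the two poles, eight points in all, which merely forces the fixed set of the (assumed) involution to be a circle, i.e.\ the extension to be a topological reflection --- and three disjoint invariant circles, each crossing a mirror circle twice, together with two fixed poles on the mirror, is a perfectly realizable configuration. Worse, your key claim that the embedded graph is ``genuinely three-dimensional in its linking behavior'' is false: the graph of Figure~\ref{fig:theta^theta} is planar (it is drawn in the plane), and the paper emphasizes precisely this --- the figure is planar, but the \emph{action} of $y$ on it is not. The actual obstruction, which your sketch never engages, comes from the arcs and the complementary regions: because $y$ flips circle $A$ (carrying, up to the fixed points, the arc of $A$ facing the unbounded region onto the arc facing the region between $A$ and $B$) and likewise flips $C$, any extension to $S^2$ must send the component of the complement containing the point at infinity both to the component between $A$ and $B$ and to the component between $B$ and $C$, which is impossible. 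Without an argument of this kind --- one tracking how the six arcs attach to the circles and how the flips move those attachment points --- part (c), which you yourself identify as the main obstacle, remains unproved, and the route through fixed-set dimension cannot be repaired to yield it.
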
 

The proof of Theorem~\ref{thm:badisbad} is delayed until the end of the section. First we discuss two corollaries. Recall that  a topological space is {\it planar} if it embeds in $S^2$. 

\begin{corollary} 
\label{cor:badnoplanar} 
Let $\G$ be a triangle-free finite simplicial graph which contains an induced copy of $\Pi$
(with non-blue edges possibly subdivided).
Suppose that $\partial X_{\Gamma}$ is connected, locally connected, and has no local cut points. Then $\partial X_{\Gamma}$ is not planar.
\end{corollary}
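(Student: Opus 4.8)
The plan is to argue by contradiction: assume $\partial X_\Gamma$ is planar, and then exhibit an embedding of a boundary of $W_\Pi$ in $S^2$ together with an extension of the $W_\Pi$-action to $S^2$, contradicting Theorem~\ref{thm:badisbad}(c). The first task is to pin down the homeomorphism type of $\partial X_\Gamma$. Since $\Gamma$ is triangle-free, the Davis--Moussong complex $\Sigma_\Gamma$ is $2$-dimensional, so by the dimension count used in the proof of Corollary~\ref{cor:Menger} (via Bestvina--Mess) the covering dimension of $\partial X_\Gamma$ is at most $1$. As a CAT(0) boundary under a geometric action, $\partial X_\Gamma$ is a continuum, and it is connected by hypothesis; moreover it contains the nondegenerate embedded boundary of $W_\Pi$ provided by Lemma~\ref{lem:inboundary} (nondegenerate because, by Theorem~\ref{thm:badisbad}(a), that boundary already contains a $\Theta$-graph), so it has more than one point and is therefore exactly $1$-dimensional. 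Being, by assumption, a planar, $1$-dimensional, connected, locally connected continuum with no local cut points, Whyburn's topological characterization of the Sierpinski carpet forces $\partial X_\Gamma$ to be a Sierpinski carpet.

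Next I would upgrade the embedding $\partial X_\Gamma \hookrightarrow S^2$ into an extension of the entire $W_\Gamma$-action. The complement $S^2 \setminus \partial X_\Gamma$ is a disjoint union of open Jordan domains $D_i$ (the holes of the carpet), whose boundary circles are topologically distinguished inside $\partial X_\Gamma$; hence $W_\Gamma$ permutes the $D_i$ and acts on the corresponding peripheral circles. Choosing one representative hole in each $W_\Gamma$-orbit, I would cone the induced action of its stabilizer on the boundary circle to an action on the closed disk, and then spread this choice equivariantly across the orbit. Since the diameters of the holes tend to zero, the resulting homeomorphisms of the $\overline{D_i}$ assemble with the given action on $\partial X_\Gamma$ into a genuine action of $W_\Gamma$ on $S^2 = \partial X_\Gamma \cup \bigcup_i \overline{D_i}$ that restricts to the original action on $\partial X_\Gamma$. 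This is the standard ``extend across the complementary disks'' argument, and it is precisely here that the carpet structure, and therefore the hypotheses of local connectivity and absence of local cut points, is used in an essential way.

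Finally I would restrict to the special subgroup $W_\Pi \leq W_\Gamma$, which exists because $\Pi$ is an induced subgraph of $\Gamma$. Lemma~\ref{lem:inboundary} gives a $W_\Pi$-equivariantly embedded boundary $\partial X_\Pi = \partial C(W_\Pi)$ sitting inside $\partial X_\Gamma \subset S^2$, and $C(W_\Pi)$ is a proper CAT(0) space on which $W_\Pi$ acts geometrically, so Theorem~\ref{thm:badisbad} applies to $\partial X_\Pi$. Restricting the $W_\Gamma$-action on $S^2$ built above to $W_\Pi$ produces an action of $W_\Pi$ on $S^2$ by homeomorphisms whose restriction to $\partial X_\Pi$ is the given action; that is, the induced $W_\Pi$-action on this embedded copy of $\partial X_\Pi$ \emph{does} extend to $S^2$, contradicting Theorem~\ref{thm:badisbad}(c). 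Hence $\partial X_\Gamma$ cannot be planar. I expect the main obstacle to be the middle step, namely promoting ``$\partial X_\Gamma$ embeds in $S^2$'' to ``the $W_\Gamma$-action extends to $S^2$''; this relies both on recognizing the boundary as a Sierpinski carpet and on coning off the complementary disks in a way that is coherent across $W_\Gamma$-orbits, whereas the remaining steps are essentially bookkeeping with the special-subgroup boundary.
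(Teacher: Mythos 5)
Your proposal is correct and takes essentially the same route as the paper: both identify $\partial X_\Gamma$ as a Sierpinski carpet via Whyburn's characterization (using triangle-freeness for $1$-dimensionality and the connectedness, local connectedness, and no-local-cut-point hypotheses), and both then extend the boundary action over the complementary disks of the carpet to all of $S^2$ (the paper phrases this as "every homeomorphism of the carpet preserves the peripheral circles and hence extends to $S^2$," which is exactly your coning argument), contradicting Theorem~\ref{thm:badisbad}(c). The only cosmetic difference is that you extend the full $W_\Gamma$-action and then restrict to the special subgroup $W_\Pi$, whereas the paper extends the $W_\Pi$-action directly.
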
 

\begin{proof}[Proof of Corollary~\ref{cor:badnoplanar} from Theorem~\ref{thm:badisbad}]
First, we claim that if $\partial X_{\G}$ satisfies all the hypotheses and is planar, then $\partial X_{\G}$ is a Sierpinski carpet.  Indeed, the Sierpinski carpet is the unique 1-dimensional topological space which is compact, connected, locally connected, planar, and has no cut points or local cut points~\cite{Whyburn}. Our assumption of triangle-free ensures that the boundary is $1$-dimensional by a theorem of Bestvina-Mess~\cite{Bes96, BM}. Since $\bndry X_{\G}$ is compact, and has no cut points~\cite[Theorem~1]{PS09}, this proves the claim. 

Now suppose that $\partial X_{\G}$ is a Sierpinski carpet $\mathcal{S}$ and $\Gamma$ contains an induced copy of the bad graph $\Pi$.  Then since $W_{\Pi}$ is a special subgroup of $W_{\G}$, the action of $W_{\Pi}$ on $\bndry 
X_{\Pi} \subset \partial X_\G$ extends to an action by homeomorphisms on the Sierpinski carpet. Every homeomorphism of the Sierpinski carpet preserves the set of non-separating circles.    Let $\mathcal{S} \cong S^2 \setminus \bigcup_iD_i$, where $\lbrace D_i\rbrace $ is a dense null family of open round discs  $D_i$ in $S^2$ such that $\bar D_i \cap \bar D_j= \emptyset$ if $i \neq j$.  Then the non-separating circles in $\mathcal{S}$ are exactly the boundaries of the $D_i$.   Thus every homeomorphism of $\mathcal{S}$ extends to $S^2$, so the action of $W_{\Pi}$ on $\partial W_{\G}$ extends to an action on $S^2$.  But this contradicts Theorem~\ref{thm:badisbad}, so $\partial X_{\G}$ must be non-planar. 
\end{proof} 
We can now put the pieces together to prove Theorem~\ref{thm:main2}, which we re-state for the convenience of the reader. 
\vskip .1 in 
\noindent {\bf Theorem~\ref{thm:main2}} \it{Let $\G$ be a triangle-free inseparable graph and let $X_\Gamma$ be a $\CAT(0)$ space on which $W_{\G}$ acts geometrically.  If $\G$ is non-planar and $\bndry X_\G$ is locally connected and contains no local cut points, then $\bndry X_\G$ is non-planar.  
 } 
\vskip .1 in

\begin{proof} Suppose that $\G$  has no triangles and is non-planar.  Then by Theorem~\ref{thm:main1} either $\partial X_\G$ is non-planar, or $W_\Gamma$ contains a finite index subgroup $W_\Lambda$ such that $\Lambda$ contains an induced copy of $\Pi$. Then since $\Gamma$ is inseparable, $\partial X_\Gamma$ ($ \cong X_\Lambda$) is connected. By hypothesis it is locally connected and has no local cut points. Therefore, by Corollary~\ref{cor:badnoplanar}, $\partial X_\G$ is non-planar. \end{proof} 

\begin{corollary} \label{cor:badmeansmenger} Suppose $\Gamma$ is triangle-free and inseparable such that $W_{\G}$ is either hyperbolic or $\CAT(0)$ with isolated flats. Suppose further that $\Gamma$ contains an induced (with non-blue edges possibly subdivided) copy of the bad graph $\Pi$ below in Figure~\ref{fig:badgraph}. Then $\partial X_{\G}$ is the Menger curve, where $X_{\G}$ is any proper $\CAT(0)$ that $W_\G$ acts on geometrically. 
\end{corollary}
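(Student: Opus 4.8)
The plan is to combine the earlier structural results with two known characterizations of the Menger curve as boundaries of $\CAT(0)$ groups. The corollary asserts that under the given hypotheses, $\partial X_\G$ is the Menger curve. First I would establish, exactly as in the proof of Corollary~\ref{cor:Menger}, that $\partial X_\G$ is $1$-dimensional: since $\G$ is triangle-free, $\Sigma_\G$ is $2$-dimensional, and inseparability of $\G$ rules out virtual freeness, so $\vcd(W_\G)=2$; the Bestvina--Mess theorem~\cite[Corollary 1.4]{BM} then gives that the covering dimension of $\partial X_\G$ equals $\vcd(W_\G)-1=1$. Second, I would invoke local connectedness: in the hyperbolic case this follows from~\cite{BM, Swa, Bow99a}, and in the isolated flats case from~\cite{HR1}, using that the top rank of a virtually abelian subgroup is at most $2$.

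Next I would verify the two topological conditions that distinguish the Menger curve among $1$-dimensional boundaries, namely connectedness and the absence of local cut points. Inseparability means $W_\G$ does not split over a finite or two-ended subgroup, so $\partial X_\G$ is connected and, by~\cite[Theorem~6.2]{Bow98} in the hyperbolic case or~\cite[Theorem~1.3]{Hau18b} in the isolated flats case, has no local cut points. At this stage I have assembled precisely the hypotheses of Corollary~\ref{cor:badnoplanar}: $\partial X_\G$ is connected, locally connected, and without local cut points. Since $\G$ contains an induced (possibly subdivided) copy of $\Pi$, Corollary~\ref{cor:badnoplanar} applies directly and forces $\partial X_\G$ to be \emph{non-planar}.

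The final step is to upgrade non-planarity to the Menger curve via the appropriate characterization theorem. A $1$-dimensional, connected, locally connected boundary with no local cut points that is moreover non-planar is a Menger curve: in the hyperbolic case this is~\cite[Theorem~4]{KK00}, and in the isolated flats case it is~\cite[Theorem~1.2]{Hau18b}. Applying whichever is relevant, I conclude that $\partial X_\G$ is the Menger curve, as claimed. I expect the proof to be essentially a bookkeeping exercise, since each hypothesis feeds cleanly into a pre-existing theorem; the only real subtlety is ensuring the same dichotomy (hyperbolic versus isolated flats) is tracked consistently across the dimension, local connectivity, no-local-cut-point, and Menger-curve steps, so that the correct citation is used at each stage.

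\begin{proof}
By the argument in the proof of Corollary~\ref{cor:Menger}, $\partial X_\G$ is $1$-dimensional: since $\G$ is triangle-free, $\Sigma_\G$ is $2$-dimensional, and since $\G$ is inseparable, $W_\G$ is not virtually free, whence $\vcd(W_\G)=2$; the theorem of Bestvina--Mess~\cite[Corollary 1.4]{BM} then gives that the covering dimension of $\partial X_\G$ equals $\vcd(W_\G)-1=1$. When $W_\G$ is hyperbolic, $\partial X_\G$ is locally connected by~\cite{BM, Swa, Bow99a}; when $W_\G$ is $\CAT(0)$ with isolated flats, local connectedness follows from~\cite{HR1}, using that the highest rank of a virtually abelian subgroup is $2$. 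Since $\G$ is inseparable, $W_\G$ does not split over a finite or two-ended subgroup, so $\partial X_\G$ is connected and has no local cut points, by~\cite[Theorem~6.2]{Bow98} in the hyperbolic case and by~\cite[Theorem~1.3]{Hau18b} in the isolated flats case.

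Thus $\partial X_\G$ is connected, locally connected, and has no local cut points. Since $\G$ contains an induced (possibly subdivided) copy of $\Pi$, Corollary~\ref{cor:badnoplanar} implies that $\partial X_\G$ is non-planar. Finally, a connected, locally connected, $1$-dimensional space without local cut points which is non-planar is a Menger curve, by~\cite[Theorem~4]{KK00} in the hyperbolic case and by~\cite[Theorem 1.2]{Hau18b} in the isolated flats case. Therefore $\partial X_\G$ is the Menger curve.
\end{proof}
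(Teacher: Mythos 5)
Your proof is correct and follows essentially the same route as the paper's: verify that $\partial X_\G$ is $1$-dimensional, connected, locally connected, and without local cut points (citing the same dichotomy of results for the hyperbolic and isolated flats cases), apply Corollary~\ref{cor:badnoplanar} to get non-planarity, and conclude via the Kapovich--Kleiner and Haulmark characterizations of the Menger curve. The only cosmetic difference is that you derive $1$-dimensionality through the $\vcd$ computation from Corollary~\ref{cor:Menger}, where the paper cites Bestvina directly; if anything, your version is slightly more careful, since exact dimension $1$ (rather than dimension at most $1$) uses inseparability to rule out virtual freeness.
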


\begin{proof}[Proof of Corollary~\ref{cor:badmeansmenger}]
By a theorem of Bestvina~\cite{Bes96}, the boundary $\partial X_{\G}$ is 1-dimensional since $\G$ is triangle-free.  As $W_{\G}$ is one-ended, $\partial X_{\G}$ is connected.  If $W_{\G}$ is hyperbolic, then the assumption that $\G$ is inseparable (so in particular $W_{\G}$ does not split over any two-ended group) implies that $\partial X_{\G}$ does not contain any local cut points by a result of Bowditch~\cite{Bow98}.  When $W_{\G}$ has isolated flats the analogous result is due to Haulmark~\cite{Hau18b}.  
In both cases,  $\partial X_{\G}$ is locally connected; this follows by Bestvina--Mess~\cite{BM} or Bowditch~\cite{Bow98} in the hyperbolic case
and by Hruska--Ruane~\cite{HR1} in the $\CAT(0)$ with isolated flats case.
 Therefore, all the hypotheses of Corollary~\ref{cor:badnoplanar} are satisfied, and the boundary is non-planar. Thus it must be a Menger curve~\cite{KK00, Hau18b}. 
\end{proof}

\begin{proof}[Proof of Theorem~\ref{thm:badisbad}]

Note that the boundary of $X_{{\Pi}}$ is possibly planar; the double over $y$ of $\Pi$ is a planar graph, see Lemma~\ref{lem:doubling}.  The point of the proof is that the action of $y$ on any boundary of $W_{{\Pi}}$ is non-planar.   Consider the graph $\Pi$  in Figure~\ref{fig:badgraph} below.  The black edges may be subdivided; the two edges connecting $x$ to $y$ and $y$ to $z$ are not. We will also assume that our defining graph has no triangles, which will force some of the edges to be subdivided. 

\begin{figure}[h!]
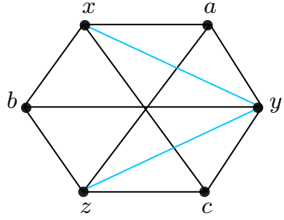

\medskip
\begin{overpic}[scale=0.7]{fig-lambdas1.pdf}
\put(25,76){\small $x$}
\put(75,76){\small $a$}
\put(102,37){\small $y$}
\put(-6,37){\small $b$}
\put(24,-6){\small $z$}
\put(74,-6){\small $c$}
\end{overpic}
\caption{This is the bad graph $\Pi$.}
\label{fig:badgraph}
\end{figure}

We now define three cycles in $\Pi$ as follows: 
$A= (x, a, y)$,  $B = (x, b, y)$,
and $C= (x, c, y)$, where these denote the cycles defined by the branches traversing these essential vertices (and back to the initial essential vertex). 

  The associated special subgroups $W_{A}$, $W_{B}$, and $W_{C}$, intersect only in the finite subgroup generated by $x$ and $y$. Since every boundary of a $\CAT(0)$ space $X$ that a virtual surface group acts upon is $S^1$, we denote their boundaries using $\partial W_A$, etc. It follows that their boundaries $\partial W_A$, $\partial W_B$, and $\partial W_C$ 
are disjoint circles, each of which is invariant by the action of $y$.   By slight abuse of notation, we 
label these boundary circles in Figure~\ref{fig:theta^theta} below by $A$, $B$, and $C$ 
respectively.  

\begin{figure}[h!]
\medskip
\begin{overpic}
{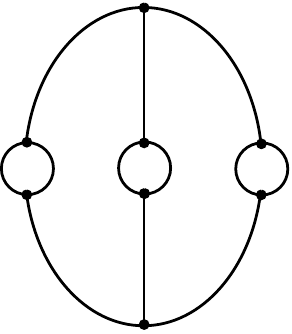}
\put(-11, 47){\small $A$}
\put(53, 47){\small $B$}
\put(89, 47){\small $C$}
\put(35, 103){\small $ (xz)^{\infty}$}
\put(35, -9){\small $ (zx)^{\infty}$}
\end{overpic}
\medskip
\caption{This figure is planar but the action of $y$ is non-planar}  
\label{fig:theta^theta}
\end{figure}

Furthermore,  the two fixed points of the action of $y$  in each circle are the endpoints of 
the loxodromic generated by the two vertices adjacent to $y$. 
For example, if the branch $[y,a]$  of $\Pi$ is 
not subdivided this is the limit set of the loxodromic element $xa$. Notice that the two points on the 
boundary $\bndry X_{\Pi}$ associated to the sequences $(xz)^\infty$ and and $(zx)^\infty$ are also 
fixed by the action of $y$, since both $x$ and $z$ commute with $y$. 

We wish to construct an 
embedded copy of Figure~\ref{fig:theta^theta} in $\bndry X_{{\Pi}}$. Throughout we will implicitly use  Lemma~\ref{lem:qc!}. Consider the following two 
cases:

\begin{itemize} 
\item $xa$ is a loxodromic.  In this case, use the cycle in $\Pi$ defined by $(x,a,z,b)$. 
 The induced special subgroup is virtually a surface group and so the boundary of this special subgroup is a circle.  This circle intersects the circle $A$ in the boundary of the subgroup defined by the branch $[x, a]$.  The endpoint associated to the sequence $(xa)^\infty$ can be connected to the endpoint of the sequence $(xz)^\infty$ using the arc in this circle which avoids the boundary of the subgroup associated to the branch $[x, a]$. Note that the endpoints of the loxodromic element $xz$ separate the endpoints of $xa$ from the endpoints of $xb$, when $xb$ is loxodromic.  This follows from the order of the elements around $A$. 

\item $xa$ is not loxodromic.  In this case $ya$ is loxodromic. In this case we will use the cycle in $\Pi$ defined by $(x,a,z,y)$.  Then we can connect the elements $(xz)^\infty$ and $(ya)^\infty$ by a an arc on this circle. 

\end{itemize}

We do this for each of $A$, $B$, and $C$ using the analogously defined circles.  If $xb$ is loxodromic, use the cycle in $\Pi$ defined by $(x,a,z,b)$, and using the other side from above, connect  $(xa)^\infty$  to the endpoint of the sequence $(xz)^\infty$. If $xb$ is not loxodromic, use $(x,b,z,y)$, connecting  $(xz)^\infty$ to the endpoints of $(yb)^\infty$.  Similarly, if $xc$ is loxodromic, we use the cycle in $\Pi$ defined by $(x,a,z,c)$.  The endpoints of $xc$ are separated from the endpoints of $xa$ by the endpoints of $xz$.  If $xc$ is not loxodromic, we use the cycle $(x,c,z,y)$, connecting $(xz)^\infty$ to $(yc)^\infty$. Moving by the action of $x$ takes the arcs connecting the endpoint of the ray $(xz)^\infty$ to arcs connecting the endpoints of the rays $(zx)^\infty$ and another point on the the circles $A$, $B$ and $C$. Therefore, in either case, we have a copy of the Figure~\ref{fig:theta^theta} in any $\CAT(0)$  boundary of $W_{{\Pi}}$, and hence in the boundary $\partial X_\G$,  since ${\Pi}$ is assumed to be an induced subgraph of $\Gamma$.  This proves (a) in the statement of 
Theorem~\ref{thm:badisbad}.

Now we claim that the action of the group element $y$ acting on Figure~\ref{fig:theta^theta} does not extend to the plane.  Up to relabelling, any embedding of Figure~\ref{fig:theta^theta} can be moved via a homeomorphism of the pair $(S^2, \text{Figure~\ref{fig:theta^theta}})$
 to the embedding given. 
The circles $A$, $B$ and $C$ are all invariant under the action of $y$, as are the boundary points $(xz)^\infty$ and $(zx)^\infty$.  The arcs connecting the points $(xz)^\infty$ and $(zx)^\infty$ to the circles $A$, $B$, and $C$ are not invariant, nor are the points where the arcs meet the circles unless $x$ and $a$ are both adjacent to $y$ in the graph ${\Pi}$, and the same for $B$ and $C$.    Assuming that $\partial X_{\G}$ is planar, $y$ sends Figure~\ref{fig:theta^theta} to a homeomorphic copy where the circles $A$, $B$ and $C$ have been flipped and the boundary points $(xz)^\infty$ and $(zx)^\infty$ are fixed.  Each of the arcs connecting the points to the circles go to arcs connecting the same points to the same circles. Consider the components of $S^2 \setminus \text{Figure}$~\ref{fig:theta^theta}. The component containing the point at infinity must go to the bounded component between $A$ and $B$, since the circle $A$ is flipped.  It must also go to the component between $B$ and $C$, since the circle $C$ is flipped.  This is a contradiction so the action does not extend to the plane.  This proves that the boundary $\partial X_{\G}$ is not planar. 
\end{proof}

\bibliographystyle{plain}
\bibliography{DHW}

\begin{thebibliography}{10}

\bibitem{Bes96}
Mladen Bestvina.
\newblock Local homology properties of boundaries of groups.
\newblock {\em Michigan Math. J.}, 43(1):123--139, 1996.

\bibitem{BKK02}
Mladen Bestvina, Michael Kapovich, and Bruce Kleiner.
\newblock Van {K}ampen's embedding obstruction for discrete groups.
\newblock {\em Invent. Math.}, 150(2):219--235, 2002.

\bibitem{BKS08}
Mladen Bestvina, Bruce Kleiner, and Michah Sageev.
\newblock The asymptotic geometry of right-angled {A}rtin groups. {I}.
\newblock {\em Geom. Topol.}, 12(3):1653--1699, 2008.

\bibitem{BM}
Mladen Bestvina and Geoffrey Mess.
\newblock The boundary of negatively curved groups.
\newblock {\em J. Amer. Math. Soc.}, 4(3):469--481, 1991.

\bibitem{Bow99a}
B.~H. Bowditch.
\newblock Connectedness properties of limit sets.
\newblock {\em Trans. Amer. Math. Soc.}, 351(9):3673--3686, 1999.

\bibitem{Bow98}
Brian~H. Bowditch.
\newblock Cut points and canonical splittings of hyperbolic groups.
\newblock {\em Acta Math.}, 180(2):145--186, 1998.

\bibitem{Can89}
James~W. Cannon.
\newblock The theory of negatively curved spaces and groups.
\newblock In {\em Ergodic theory, symbolic dynamics, and hyperbolic spaces
  ({T}rieste, 1989)}, Oxford Sci. Publ., pages 315--369. Oxford Univ. Press,
  New York, 1991.

\bibitem{Cap09}
Pierre-Emmanuel Caprace.
\newblock Buildings with isolated subspaces and relatively hyperbolic {C}oxeter
  groups.
\newblock {\em Innov. Incidence Geom.}, 10:15--31, 2009.

\bibitem{Cap15}
Pierre-Emmanuel Caprace.
\newblock Erratum to ``{B}uildings with isolated subspaces and relatively
  hyperbolic {C}oxeter groups'' [ {MR}2665193].
\newblock {\em Innov. Incidence Geom.}, 14:77--79, 2015.

\bibitem{Claytor}
Schieffelin Claytor.
\newblock Topological immersion of {P}eanian continua in a spherical surface.
\newblock {\em Ann. of Math. (2)}, 35(4):809--835, 1934.

\bibitem{CK00}
Christopher~B. Croke and Bruce Kleiner.
\newblock Spaces with nonpositive curvature and their ideal boundaries.
\newblock {\em Topology}, 39(3):549--556, 2000.

\bibitem{DGP11}
Fran\c{c}ois Dahmani, Vincent Guirardel, and Piotr Przytycki.
\newblock Random groups do not split.
\newblock {\em Math. Ann.}, 349(3):657--673, 2011.

\bibitem{DavisBook}
Michael~W. Davis.
\newblock The geometry and topology of {C}oxeter groups.
\newblock In {\em Introduction to modern mathematics}, volume~33 of {\em Adv.
  Lect. Math. (ALM)}, pages 129--142. Int. Press, Somerville, MA, 2015.

\bibitem{DO01}
Michael~W. Davis and Boris Okun.
\newblock Vanishing theorems and conjectures for the {$\ell^2$}-homology of
  right-angled {C}oxeter groups.
\newblock {\em Geom. Topol.}, 5:7--74, 2001.

\bibitem{GO07}
Ross Geoghegan and Pedro Ontaneda.
\newblock Boundaries of cocompact proper {${\rm CAT}(0)$} spaces.
\newblock {\em Topology}, 46(2):129--137, 2007.

\bibitem{Hai15}
Peter Ha\"{i}ssinsky.
\newblock Hyperbolic groups with planar boundaries.
\newblock {\em Invent. Math.}, 201(1):239--307, 2015.

\bibitem{Hau18b}
Matthew Haulmark.
\newblock Boundary classification and two-ended splittings of groups with
  isolated flats.
\newblock {\em J. Topol.}, 11(3):645--665, 2018.

\bibitem{HHS18}
Matthew Haulmark, G.~Christopher Hruska, and Bakul Sathatye.
\newblock Non-hyperbolic groups with menger curve boundary.
\newblock preprint.

\bibitem{HK1}
G.~Christopher Hruska and Bruce Kleiner.
\newblock Hadamard spaces with isolated flats.
\newblock {\em Geom. Topol.}, 9:1501--1538, 2005.
\newblock With an appendix by the authors and Mohamad Hindawi.

\bibitem{HR1}
G.~Christopher Hruska and Kim Ruane.
\newblock Connectedness properties and splittings of groups with isolated
  flats.
\newblock arXiv:1705.00784.

\bibitem{KK00}
Michael Kapovich and Bruce Kleiner.
\newblock Hyperbolic groups with low-dimensional boundary.
\newblock {\em Ann. Sci. \'Ecole Norm. Sup. (4)}, 33(5):647--669, 2000.

\bibitem{MR99}
Michael Mihalik and Kim Ruane.
\newblock {$\rm CAT(0)$} groups with non-locally connected boundary.
\newblock {\em J. London Math. Soc. (2)}, 60(3):757--770, 1999.

\bibitem{MT13}
Michael Mihalik and Steven Tschantz.
\newblock Geodesically tracking quasi-geodesic paths for {C}oxeter groups.
\newblock {\em Bull. Lond. Math. Soc.}, 45(4):700--714, 2013.

\bibitem{P5}
Panos Papasoglu.
\newblock Quasi-isometry invariance of group splittings.
\newblock {\em Ann. of Math. (2)}, 161(2):759--830, 2005.

\bibitem{PS09}
Panos Papasoglu and Eric Swenson.
\newblock Boundaries and {JSJ} decompositions of {CAT}(0)-groups.
\newblock {\em Geom. Funct. Anal.}, 19(2):559--590, 2009.

\bibitem{SchSta18}
Kevin Schreve and Emily Stark.
\newblock Non-planar graphs in boundaries of {CAT}(0) groups.
\newblock arXiv:1707.07760.

\bibitem{Sta68}
John~R. Stallings.
\newblock On torsion-free groups with infinitely many ends.
\newblock {\em Ann. of Math. (2)}, 88:312--334, 1968.

\bibitem{Swa}
G.~A. Swarup.
\newblock On the cut point conjecture.
\newblock {\em Electron. Res. Announc. Amer. Math. Soc.}, 2(2):98--100, 1996.

\bibitem{Swi16}
Jacek \'{S}wi\k{a}tkowski.
\newblock Hyperbolic {C}oxeter groups with {S}ierpi\'nski carpet boundary.
\newblock {\em Bull. Lond. Math. Soc.}, 48(4):708--716, 2016.

\bibitem{Whyburn}
G.~T. Whyburn.
\newblock Topological characterization of the {S}ierpi\'nski curve.
\newblock {\em Fund. Math.}, 45:320--324, 1958.

\end{thebibliography}

\end{document}